\documentclass{amsart}
\usepackage{tikz-cd,mathtools,enumerate,hyperref,amssymb}

\newtheorem{theorem}{Theorem}[section]
\newtheorem{proposition}[theorem]{Proposition}
\newtheorem{definition}[theorem]{Definition}
\newtheorem{corollary}[theorem]{Corollary}
\newtheorem{lemma}[theorem]{Lemma}

\numberwithin{equation}{section}

\newtheorem{question}[theorem]{Question}

\newcommand{\norm}[1]{\lVert#1\rVert}
\newcommand{\be}{\mathbf{e}}
\newcommand{\bff}{\mathbf{f}}
\newcommand{\bl}{\mathbf{l}}
\newcommand{\bx}{\mathbf{x}}
\newcommand{\by}{\mathbf{y}}
\newcommand{\bz}{\mathbf{z}}
\newcommand{\bone}{\mathbf{1}}
\newcommand{\bsigma}{\boldsymbol{\sigma}}
\newcommand{\cA}{\mathcal{A}}
\newcommand{\cC}{\mathcal{C}}
\newcommand{\cI}{\mathcal{I}}
\newcommand{\cJ}{\mathcal{J}}
\newcommand{\cL}{\mathcal{L}}
\newcommand{\cR}{\mathcal{R}}
\newcommand{\cS}{\mathcal{S}}
\newcommand{\CC}{\mathbb{C}}
\newcommand{\NN}{\mathbb{N}}
\newcommand{\QQ}{\mathbb{Q}}
\newcommand{\RR}{\mathbb{R}}
\newcommand{\ZZ}{\mathbb{Z}}
\newcommand{\round}[1]{\lfloor#1\rceil}

\let\emptyset\varnothing

\title[Transcendence of dendric beta-expansions]{Transcendence of dendric beta-expansions with pure discrete spectrum}
\author{Jihyuk Seo}
\author{Wolfgang Steiner}
\address{Université Paris Cit\'e, CNRS, IRIF, F-75013 Paris, France}
\email{jihyuk.seo@irif.fr, steiner@irif.fr}

\thanks{This work was supported by the ERC grant DynAMiCs (101167561) of the European Research Council, by the bilateral grant SYMDYNAR (ANR-23-CE40-0024 and FWF~I~6750) of the Agence Nationale de la Recherche and the Austrian Science Fund, and by the ANR project IZES (ANR-22-CE40-0011).}

\begin{document}

\maketitle

\begin{abstract}
    We prove the transcendence of numbers that have $\beta$-expansions (in algebraic bases $\beta$) belonging to dendric shifts with pure discrete spectrum, under combinatorial conditions on their directive sequences of substitutions. This includes Arnoux--Rauzy sequences on arbitrary alphabets and Cassaigne--Selmer sequences. The proof uses Rauzy fractals and the subspace theorem.
\end{abstract}

\section{Introduction and main result}
In computer science, it is an important question how fast the binary expansion (or base~$b$ expansion for an integer $b \ge 2$) of a real number can be computed.
The $b$-expansions of rational numbers are eventually periodic and can therefore be computed in linear time.
Hartmanis and Stearns~\cite{Hartmanis65computationalcomplexity} naturally asked whether there exists a real algebraic irrational number whose binary (or base~$b$) expansion can be computed in linear time by a Turing machine.
Cobham~\cite{Cobham68hartmanisstearnsproblem} formulated a weaker version of the Hartmanis--Stearns conjecture, replacing Turing machines by finite automata (or, equivalently, by uniform morphisms).
While the Hartmanis--Stearns conjecture remains open, Cobham's conjecture was proved by Adamczewski, Bugeaud, and Luca~\cite{Adamczewski04complexitenombres}, based on Schlickewei's $\mathfrak{p}$-adic subspace theorem. 
Adamczewski and Bugeaud~\cite{Adamczewski07Dynamics$beta$shifts} generalized this result to algebraic bases $\beta>1$, when $\beta$ is a Pisot or a Salem number.
Another proof of the conjecture uses Mahler's method, which was suggested by Cobham and accomplished by Adamczewski and Faverjon~\cite{Adamczewski17MethodeMahler}. 
More precisely, it is proved in \cite{Adamczewski17MethodeMahler} that, for every algebraic number $\beta$ such that $|\beta|>1$ and every infinite word $w=w_0w_1w_2\ldots$ generated by a uniform morphism, the number $(w)_{\beta} = \sum_{i=0}^\infty w_i \beta^{-i}$ is either in $\QQ(\beta)$ or transcendental.

In combinatorics, a well known theorem by Coven and Hedlund~\cite{Coven73Sequencesminimal} provides another characterization of eventually periodic words by factor complexity: having at most $n$ distinct subwords of length $n$ for every sufficiently large $n$.
Sturmian words, introduced by Morse and Hedlund~\cite{Morse40SymbolicDynamics}, are aperiodic words of the lowest factor complexity $n{+}1$ for every $n \ge 1$.
A~natural question is whether there is a Sturmian word $w$ such that $(w)_{b}$ is algebraic.
The first partial result was obtained by Danilov~\cite{Danilov72classestranscendental}, who proved for the Fibonacci word~$w$ that the real number $(w)_{b}$ is transcendental.
Ferenczi and Mauduit~\cite{Ferenczi97TranscendenceNumbers} extended this result to Sturmian words. 
Furthermore, they proved for any Arnoux--Rauzy word~$w$ on the alphabet~$\{ 0,1,2 \}$ that $(w)_{b}$ is transcendental; see Section~\ref{s:AR} for the definition of Arnoux--Rauzy words.
Developing the $\mathfrak{p}$-adic subspace theorem technique of \cite{Adamczewski07Dynamics$beta$shifts}, Kebis et al.~\cite{Kebis24TranscendenceNumbers} 
extended the result for Sturmian words~$w$ to $(w)_\beta$ with an algebraic base~$\beta$ such that $|\beta|>1$. 
For episturmian words, which include Arnoux--Rauzy words, Kebis~\cite{Kebis23TranscendenceNumbers} obtained the dichotomy that $(w)_{\beta}$ is either transcendental or in $\QQ(\beta)$.
Using similar techniques and a so-called refined Diophantine exponent for sequences, Nguyen~\cite{nguyen2026transcendencemeasuresrefineddiophantine,nguyen2026spectrumrefineddiophantineexponent} gave more precise transcendence results according to Mahler's classification.
Moreover, by \cite{Kebis24TranscendenceNumbers,Kebis26TranscendencePisot}, $k$-bonacci words provide only transcendental numbers $(w)_{\beta}$.
It is natural to ask whether this can be extended to all Arnoux--Rauzy words. 

\begin{question}\label{q:transcendence of Arnoux--Rauzy words}
    Is $(w)_{\beta}$ transcendental for each Arnoux--Rauzy word $w$ on the alphabet $\{ 0,1,\dots,d \}$ and each algebraic base $\beta$ such that $|\beta|>1$?
\end{question}

The $k$-bonacci words are also fixed points of (unimodular) Pisot substitutions. 
Assuming the famous Pisot substitution conjecture (see \cite{Akiyama15PisotSubstitution}), Kebis et al.\ \cite{Kebis26TranscendencePisot} proved that fixed points of Pisot substitutions satisfy the $\beta$-transcendence dichotomy $(w)_{\beta} \in \mathbb{Q}(\beta)$ or $(w)_{\beta} \notin \overline{\mathbb{Q}}$. 
The Pisot substitution conjecture is still open but has been proved for large classes of substitutions by Barge ~\cite{Barge16,Barge18Pisotconjecture} and others; see \cite{Akiyama15PisotSubstitution} for a survey.

Most of the above-mentioned words share the following properties:
\begin{itemize}
    \item 
They are elements of symbolic dynamical systems with pure discrete spectrum, i.e., the shift action is (measurably) conjugate to a translation on a compact abelian group. 
In particular, Sturmian shifts are conjugate to translations on~$\RR/\ZZ$, and the Pisot substitution conjecture implies that fixed points of unimodular Pisot substitutions are conjugate to translations on $\RR^d/\ZZ^d$ for some $d \ge 1$.
(However, there are examples of Arnoux--Rauzy words that do not have this property \cite{CFM08}.)
While Sturmian shifts are codings of translations w.r.t.\ intervals, fixed points of unimodular Pisot substitutions define Rauzy fractals that are fundamental domains of $\RR^d/\ZZ^d$ if the Pisot conjecture holds. 
\item
They are elements of $\cS$-adic shifts~$X_{\bsigma}$, symbolic dynamical systems generated by sequences~$\bsigma$ of substitutions (satisfying particular properties). 
The definition of the fractals and the Pisot substitution conjecture can be extended to the $\cS$-adic case; see \cite{Berthe19Geometrydynamics}.
For sequences $\bsigma$ of unimodular substitutions, pure discrete spectrum can be expressed in terms of a tiling property of a collection of fractals~$\cC_{\bsigma}$.
\item
Sturmian shifts and, more generally, Arnoux--Rauzy shifts are dendric shifts, which are defined in terms of return words and studied in a series of papers by Berthé et al.\ \cite{Berthe15Acyclicconnected,Berthe15finiteindex,Berthe15Maximalbifix}.
Dendric shifts are thus natural generalizations of Sturmian words and have the same factor complexity as Arnoux--Rauzy shifts.
(Another well known class of dendric shifts is given by interval exchange transformations. 
By Avila and Forni~\cite{Avila-Forni07}, they have pure discrete spectrum if and only if they are rotations.)
\end{itemize}

We wonder if one can ensure the transcendence of a number generated by words in an $\cS$-adic shift conjugate to a $d$-dimensional toral translation.

\begin{question}\label{q:S-adic transcendence condition}
    Which conditions on directive sequences~$\bsigma$ ensure that every word $w$ in the $\cS$-adic shift~$X_{\bsigma}$ provides a transcendental number $(w)_{\beta}$ for each algebraic base $\beta$ with $|\beta| > 1$?
\end{question}

In the present paper, we give partial answers to Questions~\ref{q:transcendence of Arnoux--Rauzy words} and~\ref{q:S-adic transcendence condition}.
For precise definitions of the notions that are used in the following theorem, we refer to Section~\ref{sec:defex}. 

\begin{theorem} \label{thm:main}
Let $\bsigma$ be a primitive and recurrent sequence of unimodular substitutions over an alphabet $\cA \subset \overline{\QQ}$, and let $\beta \in \overline{\QQ}$ with $|\beta| > 1$.
If the collection~$\cC_{\bsigma}$ forms a tiling with compact sets, then for each word $w \in X_{\bsigma}$, 
$(w)_{\beta}$~is either transcendental or in $\QQ(\beta,\cA)$.
If moreover $X_{\bsigma}$ is dendric,  then $(w)_{\beta}$ is transcendental.
\end{theorem}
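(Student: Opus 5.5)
The plan is to run the Adamczewski--Bugeaud criterion coming from the $\mathfrak{p}$-adic subspace theorem, in the form used by Kebis et al. Suppose $w \in X_{\bsigma}$ and $(w)_\beta$ is algebraic. It is enough to show that $w$ has a \emph{uniform stammering} property: there are infinitely many $n$ and words $U_n, V_n$ with $|V_n| \to \infty$, $|U_n| \le c\,|V_n|$, such that $U_n V_n^{\,1+\varepsilon}$ is a prefix of $w$, with $c,\varepsilon>0$ independent of $n$. Granting this, put $p_n = |U_n|$, $q_n = |V_n|$. Then the eventually periodic word $U_n V_n^\infty$ gives a number $\xi_n \in \QQ(\beta,\cA)$ with
\[
\xi_n = \frac{P_n(\beta)}{\beta^{p_n-1}(\beta^{q_n}-1)},\qquad |(w)_\beta - \xi_n| \ll |\beta|^{-(p_n+(1+\varepsilon)q_n)} .
\]
Now apply the subspace theorem to the linear forms in $(\beta^{p_n+q_n}, \beta^{p_n}, P_n(\beta))$, taking places over all valuations of the number field $\QQ(\beta,\cA,(w)_\beta)$ for which $\beta$ is a non-unit, exactly as in \cite{Adamczewski07Dynamics$beta$shifts,Kebis24TranscendenceNumbers}. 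This yields a nontrivial linear relation that holds for infinitely many $n$. Because $q_n \to \infty$, letting $n\to\infty$ along these $n$ forces $(w)_\beta \in \QQ(\beta,\cA)$. That is the first assertion.

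The central step is to produce the repetitions $U_nV_n^{1+\varepsilon}$ using the tiling hypothesis. Since $\bsigma$ is recurrent, some block $\sigma_{[0,k)}$ occurs again as $\sigma_{[m_n,m_n+k)}$ for infinitely many $n$. Because $\bsigma$ is primitive, the desubstitution of $w$ to level $m_n$ is built from the words $\sigma_{[0,m_n)}(a)$ with $a \in \cA$. The fact that $\cC_{\bsigma}$ tiles with compact subtiles means the $\cS$-adic shift is measurably conjugate to a minimal translation on $\RR^{d}/\ZZ^{d}$. I would use the Rauzy fractal and its bounded, positive-measure subtiles to show the following: for every point of the torus, hence for every $w$ (uniformity here comes from compactness), the orbit returns within $O(\|\sigma_{[0,m_n)}\|)$ steps to a $\delta$-neighbourhood of itself. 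Here $\delta$ is comparable to the contraction of $\sigma_{[0,m_n)}$ on the stable space.

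Next I would convert this geometric return into a combinatorial one. By the tiling property, two points whose positions in $\cC_{\bsigma}$ differ by a small translation lie in the same level-$m_n$ subtile. They therefore share a long common future, of length at least a fixed fraction of the return time. This is where recurrence enters: the same block $\sigma_{[0,k)}$ guarantees a uniform proportion for every $n$. From this I would extract $U_n, V_n$ with $|V_n|$ comparable to $|\sigma_{[0,m_n)}|$ and a uniform $\varepsilon$. I expect this uniformity over \emph{all} $w \in X_{\bsigma}$, not only over generic points, to be the main obstacle. My first attempt would use the compactness of the tiles together with the balance property coming from primitivity plus recurrence.

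For the dendric case, suppose $(w)_\beta \in \QQ(\beta,\cA)$. I would then show that a relation $(w)_\beta = \xi$ together with the stammering structure forces $w$ to be eventually periodic. The idea is to compare $(w)_\beta$ with $(S^j w)_\beta$: these are related by $\beta$-affine maps over $\QQ(\beta,\cA)$. Using that dendric shifts have factor complexity $(d-1)n+1$, I would argue that there are infinitely many distinct pairs of factors with equal $\beta$-value combinations, and this forces a linear dependence that contradicts minimality and aperiodicity. As a fallback, I would use the return-word tree structure: dendric return words form a free-group basis, which rules out the needed cancellation.
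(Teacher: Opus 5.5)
Your proposal has a genuine gap, in fact three. The criterion you reduce to is too weak for general algebraic $\beta$. The geometric step cannot produce exact repetitions. The dendric part has no working mechanism.

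\textbf{The stammering criterion fails for a general algebraic base.} Prefixes $U_nV_n^{1+\varepsilon}$ with fixed $\varepsilon$ and $|U_n|\le c|V_n|$ form the Adamczewski--Bugeaud hypothesis. The subspace theorem can exploit it for integer, Pisot and Salem bases, but not for arbitrary algebraic $\beta$ with $|\beta|>1$. Redo the bookkeeping of Proposition~\ref{prop:p-adic Subspace Theorem} for your vector $(\beta^{p_n+q_n},\beta^{p_n},P_n(\beta))$:
\begin{itemize}
\item You gain $|L|_{\nu_0}\ll|\beta|_{\nu_0}^{-\varepsilon q_n}$.
\item You also gain $\prod_{\nu\ne\nu_0}|\beta^{p_n+q_n}|_\nu=|\beta|_{\nu_0}^{-(p_n+q_n)}$.
\item You must pay $\prod_{\nu\in S}|P_n(\beta)|_\nu$, which can only be bounded by about $\prod_{\nu\in S}\max(1,|\beta|_\nu)^{p_n+q_n}$.
\end{itemize}
The inequality therefore needs the exponent $1+\varepsilon$ to exceed roughly $\log H(\beta)/\log|\beta|_{\nu_0}$; for $\beta=101/100$ that means an exponent above $400$. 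The words in the theorem have no such repetitions. The Fibonacci shift is Sturmian, hence dendric, has a periodic directive sequence, and satisfies every hypothesis, yet its critical exponent is $(5+\sqrt5)/2$.

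This is why the paper uses \emph{echoing} (Definition~\ref{def:echoing}) instead of stammering. For \emph{every} $\varepsilon>0$ it finds shifts $r_k$ with $w_i=w_{i+r_k}$ for $i<s_k$ outside at most $N$ intervals, and $r_k+|\cI_k|<\varepsilon s_k$. This buys arbitrarily high exponents at the cost of a bounded number of mismatch blocks. Those blocks become extra coordinates $z_{k,j+2}$ in the subspace theorem. Their coefficients are then killed by Lemma~\ref{lem:Common root}, once the gaps between blocks are made $\gg\log s_k$ (Lemma~\ref{lem:Expanding Gap}).

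\textbf{The geometric step cannot give exact repetitions.} Translating by $r\bff-\round{r\bff}$ keeps $\phi(T^iw)$ in the same subtile only if it is not within $\gamma$ of $\partial\cR(w_i)$. For every $w$, the mismatches between $w$ and $T^rw$ occur with positive density, at positions you cannot control pointwise. Compactness of the tiles does not change this. What the tiling actually gives is the density bound of Lemma~\ref{lem:density0}, which uses measure-zero boundaries, Kronecker's theorem, and the ergodic theorem on a finite cover. Recurrence is then used to transport one fixed level-$0$ window of length $s$ to the levels $n_k$. That is what keeps the number of mismatch intervals bounded.

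\textbf{The dendric part has no mechanism.} Having $(w)_\beta\in\QQ(\beta,\cA)$ is compatible with aperiodicity in general. For $\beta=2$ and $\cA=\{0,1,2\}$, every concatenation of the blocks $10$ and $02$ has value $4/3$. So neither eventual periodicity nor the complexity count $(|\cA|-1)n+1$ yields a contradiction by itself.

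What is needed is a quantitative non-vanishing statement. One needs a block $[p_k,q_k]\subset[0,s_k/2)$ with $\sum_i w_i\beta^{-i}\ne\sum_i w_{i+r_k}\beta^{-i}$ on it, flanked by agreement stretches of length $l_k$ with $l_k/\log s_k\to\infty$. The paper obtains this in three steps:
\begin{itemize}
\item Lemma~\ref{lem:returnwords}: if $\mathrm{ev}_\beta(P(uv))=\mathrm{ev}_\beta(P(vu))$ for all return words $uv$ of $u$, the subgroup they generate contains at most one letter. This contradicts the fact that dendric return words form a basis of $F_{\cA}$.
\item Lemma~\ref{lem:expanding gap on return words}: there is a factor $u$ of length $r_k$ with a long forced periodic continuation, obtained from the complexity count, Fine--Wilf and unimodularity.
\item A second application of the subspace theorem. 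Using $a_3\ne0$ from the first part, $x_1$ and $x_3$ are eliminated, and the distinguished non-vanishing block is bounded below by the Liouville inequality of Lemma~\ref{lem:generalized liouville inequality}.
\end{itemize}
None of these steps appears in your sketch, and your route through eventual periodicity is not justified.
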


In particular, this answers Question \ref{q:transcendence of Arnoux--Rauzy words} for large classes of Arnoux--Rauzy substitutions (see Corollaries~\ref{cor:Transcendence of Arnoux--Rauzy} and~\ref{cor:Transcendence of bounded partial quotient Arnoux--Rauzy}) and also implies the transcendence for a lot of Cassaigne--Selmer shifts (see Corollary~\ref{cor:Transcendence of Cassaigne--Selmer}).
Sufficient conditions for a collection~$\cC_{\bsigma}$ to form a tiling with compact sets can be found e.g.\ in \cite[Theorem~3.1]{Berthe19Geometrydynamics}.

This paper is organized as follows.
In the next section, we give basic definitions, corollaries, and subdivide Theorem~\ref{thm:main} into two statements: 
Words satisfying the conditions of Theorem~\ref{thm:main} are (strong) echoing (Theorem~\ref{thm:S-adic echoing}), and echoing words satisfy the $\beta$-transcendence dichotomy, with strong echoing allowing only for transcendence (Theorem~\ref{thm:echoing transcendence}). 
Note that we have to modify slightly the echoing property, which was introduced by Kebis et al.\ \cite{Kebis24TranscendenceNumbers,Kebis26TranscendencePisot}, in order to be able to prove our results, and we also simplify its definition. 
In Section~\ref{sec:Transcendence of echoing words}, we show Theorem~\ref{thm:echoing transcendence}, using the $\mathfrak{p}$-adic subspace theorem.
The proof follows essentially that of Kebis et al.\ \cite{Kebis26TranscendencePisot}, with necessary adaptations to our new definition of (strong) echoing words and allowing algebraic digits instead of only rational integer digits. 
In Section~\ref{sec:echoing of S-adic words}, we show the echoing part of Theorem~\ref{thm:S-adic echoing}. 
The proof uses Rauzy fractals, which is a higher dimensional analogue of the techniques for Sturmian shifts in Kebis et al.\ \cite{Kebis24TranscendenceNumbers}.
In Section~\ref{sec:Strong echoing of dendric S-adic words}, we finish the proof of Theorem~\ref{thm:S-adic echoing} by showing the strong echoing part, where dendricity plays a crucial role.

\section{Definitions and examples} \label{sec:defex}
\subsection{$\cS$-adic shifts, Rauzy fractals and tilings}
Let $\cA^*$ and~$\cA^{\NN}$ be the sets of finite and infinite words over a finite alphabet~$\cA$.
Let $\cS$ be the set of \emph{substitutions} over~$\cA$, i.e., maps $\sigma: \cA^* \to \cA^*$ satisfying $\sigma(vw) = \sigma(v) \sigma(w)$ for all $v,w \in \cA^*$, which are naturally extended to~$\cA^{\NN}$ by $\sigma(a_0a_1\cdots) = \sigma(a_0) \sigma(a_1) \cdots$.
A word $v \in \cA^*$ is a \emph{factor} of $w \in \cA^* \cup \cA^{\NN}$ if  $w = uvz$ for some $u \in \cA^*$ and $z \in \cA^* \cup \cA^{\NN}$. 
For $w = w_0w_1\cdots \in \cA^{\NN}$, the factors of $w$ are thus of the form
\[
w_{[m,n)} := w_m w_{m+1}\cdots w_{n-1} \qquad (0 \leq m \leq n),
\]
where $w_{[n,n)}$ is the empty word.

For a sequence of substitutions $\bsigma = (\sigma_{n})_{n\in\NN} \in \cS^{\NN}$, we use the notation
\[
\sigma_{[m,n)} := \sigma_m \circ \sigma_{m+1} \circ \cdots \circ \sigma_{n-1} \qquad (0 \leq m \leq n)
\]
for the composition of substitutions rather than for the symbolic concatenation $\sigma_m\sigma_{m+1}\cdots \sigma_{n-1}$, with $\sigma_{[n,n)}$ being the identity map. 
For convenience, we also write $w_{[x,y)} := w_{\lceil x\rceil} w_{\lceil x\rceil+1}\cdots w_{\lceil y \rceil-1}$ for $x,y \in \mathbb{R}$ with $0 \leq x \leq y$, which coincides with the above notation in case $x$ and $y$ are integers.
The \emph{$\cS$-adic language} of~$\bsigma$ is
\[
\cL_{\bsigma} := \{v \in \cA^* \,:\, \mbox{$v$ is a factor of $\sigma_{[0,n)}(a)$ for some $n \ge 0$, $a \in \cA$}\}.
\]
The \emph{$\cS$-adic shift} of~$\bsigma$ is the dynamical system $(X_{\bsigma}, T)$, where
\[
X_{\bsigma} := \{w \in \cA^{\NN} \,:\, \mbox{$v \in \cL_{\bsigma}$ for all factors $v$ of $w$}\}
\]
and $T: \cA^{\NN} \to \cA^{\NN}$ is the left shift map $w_0w_1\cdots \mapsto w_1w_2\cdots$. 
The \emph{abelianization} of a word $w \in \cA^*$ is the vector $\bl(w) := (|w|_a)_{a\in\cA} \in \NN^{\cA}$, where $|w|_a$ denotes the number of occurrences of the letter~$a$ in~$w$.
The \emph{incidence matrix} of a substitution~$\sigma$ over~$\cA$ is
\[
M_\sigma := \left(|\sigma(b)|_a\right)_{a,b\in\cA}.
\]
A~substitution $\sigma$ is called \emph{unimodular} if $|\!\det M_\sigma| = 1$. 
A~sequence of substitutions $ (\sigma_{n})_{n\in\NN}$ is called \emph{primitive} if, for each $m \ge 0$, there exists $n > m$ such that $|\sigma_{[m,n)}(b)|_a \ge 1$ for all $a,b \in \cA$, i.e., $M_{\sigma_{[m,n)}}$ is a positive matrix.
A~sequence $ (\sigma_{n})_{n\in\NN}$ is called \emph{recurrent} if for each $n \ge 1$ there exists $m \ge 1$ such that $(\sigma_m,\dots,\sigma_{m+n-1}) = (\sigma_0,\dots,\sigma_{n-1})$.

The directive sequence $\bsigma$ has \emph{bounded (weak) partial quotients} if, for each $\sigma \in \cS$, the length of blocks $(\sigma,\dots,\sigma)$ in~$\bsigma$ is bounded.

Let $\bsigma = (\sigma_{n})_{n\in\NN} \in \cS^{\NN}$ be a primitive and recurrent sequence of substitutions.
Then there exists $\bff \in \RR_+^{\cA}$ with 
\begin{equation} \label{e:u}
\bigcap_{n\in\NN} M_{\sigma_{[0,n)}} \RR_+^{\cA} = \RR_+ \bff \quad \mbox{and} \quad \norm{\bff}_1 = 1;
\end{equation}
see \cite[Proposition~1.5.5]{Thuswaldner20}.
We call $\bff$ the \emph{frequency vector} of~$\bsigma$. 
The set of \emph{limit words} of~$\bsigma$ is
\[
W_{\bsigma} = \{w \in X_{\bsigma} \,:\, w \in \sigma_{[0,n)}(\cA^{\NN}) \ \mbox{for all}\ n\ge 0\}
\]
(which is a finite set when $\bsigma$ is primitive).
Let $\be_a$, $a \in \cA$, be the canonical unit vectors of~$\RR^{\cA}$.
For $a \in \cA$, let
\[
\cR_{\bsigma}(a) := \overline{\bigg\{\sum_{i=0}^{k-1} (\be_{w_i}-\bff)  \,:\, w_0w_1\cdots \in W_{\bsigma},\, k \ge 0,\ w_k = a\bigg\}}.
\]
be the (subtiles of) \emph{Rauzy fractals} of~$\bsigma$.
Note that $\cR_{\bsigma}(a) \subset \bone^\perp$, where $\bone^\perp$ is the subspace of $\RR^{\cA}$ orthogonal to the diagonal vector $\bone = (1)_{a\in\cA}$. 
Let
\[
\cC_{\bsigma} := \{\bx + \cR_{\bsigma}(a) \,:\, \bx \in \ZZ^{\cA} \cap \bone^\perp,\, a \in \cA\}.
\]
We say that $\cC_{\bsigma}$ forms a \emph{tiling} of~$\bone^\perp$ if it covers~$\bone^\perp$, i.e., $\bigcup_{\bx\in\ZZ^{\cA}\cap\bone^\perp,a\in\cA} (\bx{+}\cR_{\bsigma}(a)) = \bone^\perp$, and, for all $\bx,\by \in \ZZ^{\cA} \cap \bone^\perp$, $a,b \in \cA$, with $\bx \ne \by$ or $a \ne b$, the intersection $(\bx {+} \cR_{\bsigma}(a)) \cap (\by {+} \cR_{\bsigma}(b))$ has zero measure w.r.t.\ the Lebesgue measure on~$\bone^\perp$.
It has compact sets if $\cR_{\bsigma}(a)$ is compact for all $a \in \cA$. 

\subsection{Dendric shifts}
Let $X$ be a shift space over a finite alphabet~$\cA$.
For a word $u \in \cA^*$, a non-empty word $v \in \cA^*$ is called a \emph{return word} of~$u$ in~$X$ if $vu$ is a factor of some $w \in X$, $vu$ starts with~$u$, and $vu$ has exactly two occurrences of~$u$.
Let $R_X(u)$ denote the set of return words of $u$ in~$X$.
(If $u$ is the empty word, then $R_X(u) = \cA$.) 
If $(X,T)$ is minimal, i.e., $\{T^{n}(w) : n\ge 0\}$ is dense in $X$ for all $w \in X$, then $X$ is called \emph{dendric} if, for each finite word $u \in \cA^*$ that is a factor of some $w \in X$, the set $R_X(u)$ forms a basis of the free group~$F_{\cA}$, i.e., the group generated by the letters $a \in \cA$ and their formal inverses~$a^{-1}$, with concatenation as operation.
(For the equivalence of this definition with other defnitions of dendricity, see \cite{Gheeraert25AlgebraicCharacterization}.) 

\subsection{Examples}
\subsubsection{Arnoux--Rauzy shifts}\label{s:AR}
The \emph{Arnoux--Rauzy substitutions} are defined by
\[
    \alpha_i: i \mapsto i, j \mapsto ij \quad (i \in \cA , \ j \in \cA \setminus \{i\} ).
\]
An \emph{Arnoux--Rauzy shift} $X_{\bsigma}$ is an $\cS$-adic shift of a sequence of Arnoux--Rauzy substitutions $\bsigma = (\alpha_{i_{n}})_{n\in\NN}$ where each letter $i \in \cA$ occurs infinitely often in $(i_{n})_{n\in\NN}$.
Then, a directive sequence of Arnoux--Rauzy shift is primitive.
Since Arnoux--Rauzy shifts are dendric (see \cite[Example 3.2]{Berthe15Acyclicconnected}), the following corollary is straightforward from Theorem~\ref{thm:main}.

\begin{corollary}\label{cor:Transcendence of Arnoux--Rauzy}
    Let $X_{\bsigma}$ be an Arnoux--Rauzy shift generated by recurrent sequence~$\bsigma$ over alphabet $\cA \subset \overline{\QQ}$.
    If $\cC_{\bsigma}$ forms a tiling of $\bone^\perp$ with compact sets,
    then $(w)_{\beta}$ is transcendental for all $w \in X_{\bsigma}$ and $\beta \in \overline{\QQ}$ with $|\beta| > 1$.
\end{corollary}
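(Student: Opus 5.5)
This corollary is obtained by checking the hypotheses of Theorem~\ref{thm:main}, the second (dendric) part of which then gives the conclusion directly. There are three things to verify: that $\bsigma$ is primitive, recurrent and unimodular; that $\cC_{\bsigma}$ forms a tiling with compact sets; and that $X_{\bsigma}$ is dendric.

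Recurrence and the tiling property with compact sets are assumptions of the corollary, so nothing needs to be done for them.

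For unimodularity, I compute the incidence matrix of $\alpha_i$. It is the identity matrix except that row~$i$ consists entirely of ones. This is because $|\alpha_i(j)|_i = 1$ for every~$j$, and $|\alpha_i(j)|_k = \delta_{jk}$ for $k \ne i$. Subtracting the other rows from row~$i$ turns row~$i$ into $\be_i^{\top}$, so $\det M_{\alpha_i} = 1$. Every Arnoux--Rauzy substitution is therefore unimodular.

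For primitivity, the text before the corollary already states that the directive sequence of an Arnoux--Rauzy shift is primitive, so I can cite it. If I wanted to check it, I would argue as follows. Fix $m$ and choose $n > m$ such that every letter occurs among $i_m,\dots,i_{n-1}$. Every letter occurring in some $\sigma_{[k,n)}(b)$ still occurs after applying $\alpha_{i_{k-1}}$, and $i_{k-1}$ then appears as well. Moreover, $\alpha_{i_{k-1}}$ maps each letter $j$ to a word containing $j$. So, working backwards from $k=n$ to $k=m$, the set of letters occurring in $\sigma_{[k,n)}(b)$ only grows, and by $k=m$ it contains $b$ and all the $i_k$, hence all of~$\cA$. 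This shows $M_{\sigma_{[m,n)}}$ is positive.

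Dendricity is the substantive input. Arnoux--Rauzy shifts are dendric by \cite[Example 3.2]{Berthe15Acyclicconnected}; this also requires minimality, which follows from primitivity. With all hypotheses in place, the second statement of Theorem~\ref{thm:main} shows that $(w)_\beta$ is transcendental for every $w \in X_{\bsigma}$ and every algebraic $\beta$ with $|\beta|>1$. The only step that is not routine is the citation for dendricity; no new argument is needed for it.
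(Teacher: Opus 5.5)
Your proposal is correct and follows the paper's route. The corollary is the dendric case of Theorem~\ref{thm:main}, with dendricity of Arnoux--Rauzy shifts taken from \cite[Example~3.2]{Berthe15Acyclicconnected}, and recurrence and the tiling property taken as hypotheses. Your explicit checks of unimodularity ($\det M_{\alpha_i}=1$) and of primitivity (the letter set of $\sigma_{[k,n)}(b)$ grows as $k$ decreases, using that every letter occurs infinitely often in $(i_n)$) are correct; they fill in details the paper states without proof.
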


Note that for almost all Arnoux--Rauzy shifts (with respect to an invariant measure of the shift space $(\{\alpha_i : i\in \cA\}^\infty, T)$ that gives positive measure to each cylinder), the collection $\cC_{\bsigma}$ forms a tiling of~$\bone^\perp$ with compact sets by \cite[Theorem~3.7]{Berthe19Geometrydynamics}.
In particular, this holds for each Arnoux--Rauzy shift over 3 letters with bounded partial quotients by \cite[Theorem~3.8]{Berthe19Geometrydynamics}. 
This allows us to partially generalize the transcendence of Arnoux--Rauzy expansions in integer bases  (\cite[Proposition~4]{Ferenczi97TranscendenceNumbers}) to algebraic bases.

\begin{corollary}\label{cor:Transcendence of bounded partial quotient Arnoux--Rauzy}
    Let $\cA \subset \overline{\QQ}$ with $|\cA| = 3$.
    Let $X_{\bsigma} \subset \cA^{\NN}$ be an Arnoux--Rauzy shift generated by a recurrent sequence $\bsigma$ with bounded partial quotients.
    Then $(w)_{\beta}$ is transcendental for all $w \in X_{\bsigma}$ and $\beta \in \overline{\QQ}$ with $|\beta| > 1$.
\end{corollary}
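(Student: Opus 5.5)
This corollary will follow from Corollary~\ref{cor:Transcendence of Arnoux--Rauzy} once its hypotheses are checked. Everything except the tiling property is already given by the assumptions: $\bsigma$ is recurrent by hypothesis. It is primitive because it is an Arnoux--Rauzy directive sequence in which every letter occurs infinitely often. It is unimodular because each $M_{\alpha_i}$ is an elementary matrix with determinant~$1$. The shift $X_{\bsigma}$ is dendric by \cite[Example~3.2]{Berthe15Acyclicconnected}.

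What remains is to show that $\cC_{\bsigma}$ forms a tiling of $\bone^\perp$ with compact sets. For this I will invoke \cite[Theorem~3.8]{Berthe19Geometrydynamics}, which treats Arnoux--Rauzy sequences over three letters with bounded partial quotients and yields exactly this tiling property, including compactness of the subtiles $\cR_{\bsigma}(a)$. I need to confirm that our notion of ``bounded (weak) partial quotients'' matches the hypothesis there: runs of a single substitution $\alpha_i$ in $\bsigma$ have bounded length. I also need to check that the cited tiling uses the lattice $\ZZ^{\cA}\cap\bone^\perp$ together with the Rauzy fractals defined as above from limit words. This is the only point requiring care. If the reference phrases the result via the projection along $\bff$ onto $\bone^\perp$, or with a different normalisation, I would identify the two collections by a linear change of coordinates preserving the lattice.

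With these checks done, Corollary~\ref{cor:Transcendence of Arnoux--Rauzy} applies directly, or equivalently the second assertion of Theorem~\ref{thm:main}. It gives that $(w)_\beta$ is transcendental for every $w\in X_{\bsigma}$ and every algebraic $\beta$ with $|\beta|>1$.
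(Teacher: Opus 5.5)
Your proof is correct and is exactly the paper's argument. The paper obtains this corollary by combining Corollary~\ref{cor:Transcendence of Arnoux--Rauzy} (that is, Theorem~\ref{thm:main} together with the dendricity of Arnoux--Rauzy shifts) with \cite[Theorem~3.8]{Berthe19Geometrydynamics}, which supplies the tiling property with compact sets for Arnoux--Rauzy shifts over three letters with bounded (weak) partial quotients; your extra checks (unimodularity, primitivity, and matching the lattice $\ZZ^{\cA}\cap\bone^\perp$ and the projection along $\bff$) are routine verifications that the paper leaves implicit.
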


This immediately implies the following.

\begin{corollary}\label{cor:Transcendence of periodic directive sequence Arnoux--Rauzy}
    Let $\cA \subset \overline{\QQ}$ with $|\cA| = 3$.
    Let $X_{\bsigma} \subset \cA^{\NN}$ be an Arnoux--Rauzy shift generated by a purely periodic sequence $\bsigma$.
    Then $(w)_{\beta}$ is transcendental for all $w \in X_{\bsigma}$ and $\beta \in \overline{\QQ}$ with ~$|\beta| > 1$.
\end{corollary}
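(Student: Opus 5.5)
This follows from Corollary~\ref{cor:Transcendence of bounded partial quotient Arnoux--Rauzy}, so the plan is to check that a purely periodic Arnoux--Rauzy directive sequence satisfies its hypotheses. Let $\bsigma = (\alpha_{i_n})_{n\in\NN}$ be purely periodic with period $p$, so that $i_{n+p} = i_n$ for all $n$.

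First, recurrence. For any $n \ge 1$, take $m = p$ (or any positive multiple of $p$). Then $(\sigma_m,\dots,\sigma_{m+n-1}) = (\sigma_0,\dots,\sigma_{n-1})$ by periodicity, so $\bsigma$ is recurrent.

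Second, bounded partial quotients. Because $X_{\bsigma}$ is an Arnoux--Rauzy shift, every letter of $\cA$ occurs infinitely often in $(i_n)$. Since $|\cA| = 3 \ge 2$, the periodic word $i_0 \cdots i_{p-1}$ contains at least two distinct letters. Consequently a block $(\alpha_i,\dots,\alpha_i)$ of consecutive equal substitutions cannot have length $p$ or more, since such a block would cover a full period consisting of a single letter. Hence all blocks have length less than $p$, and the partial quotients are bounded.

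With recurrence and bounded partial quotients established, Corollary~\ref{cor:Transcendence of bounded partial quotient Arnoux--Rauzy} applies directly and yields the transcendence of $(w)_\beta$ for every $w \in X_{\bsigma}$ and every algebraic $\beta$ with $|\beta|>1$. None of these steps is a real obstacle. The only point needing care is using the Arnoux--Rauzy condition (each letter occurring infinitely often) to rule out a constant period, which is what gives the bounded block lengths.
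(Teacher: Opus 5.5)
Your proposal is correct and follows the paper's route: the paper states that this corollary follows immediately from Corollary~\ref{cor:Transcendence of bounded partial quotient Arnoux--Rauzy}. You fill in the two checks it leaves implicit, namely recurrence (shift by the period) and bounded partial quotients (the Arnoux--Rauzy condition rules out a constant period, so blocks have length less than $p$).
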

In particular, every fixed point of primitive compositions of Arnoux--Rauzy substitutions over 3 letters yields a transcendental number for every base $\beta \in \overline{\QQ}$ with ~$|\beta| > 1$.

\subsubsection{Cassaigne--Selmer shifts}
Let $\cA = \{\mathtt{a},\mathtt{b},\mathtt{c} \} \subset \overline{\QQ}$.
The \emph{Cassaigne--Selmer substitutions} are defined by
\[
    \gamma_1 : \left\{ \begin{aligned}
         \mathtt{a} &\mapsto \mathtt{a}, \\ \mathtt{b} &\mapsto \mathtt{a}\mathtt{c}, \\ \mathtt{c} &\mapsto \mathtt{b},
    \end{aligned} \right. \quad
    \gamma_2 : \left\{ \begin{aligned}
        \mathtt{a} &\mapsto \mathtt{b}, \\ \mathtt{b} &\mapsto \mathtt{a}\mathtt{c}, \\ \mathtt{c} &\mapsto \mathtt{c}.
    \end{aligned} \right.
\]
A \emph{Cassaigne--Selmer shift} is an $\cS$-adic shift of $\bsigma = (\gamma_{i_{n}})_{n\in\NN}$ where the sequence $(i_{n})_{n\in\NN} \in \{1,2\}^{\NN}$ does not end with a sequence in $\{11,22\}^{\NN}$.
Such sequences $\bsigma$ are primitive by \cite[Lemma~1]{Cassaigne17SetSequences} and 
Cassaigne--Selmer shifts are dendric by \cite[Proposition~6]{Cassaigne17SetSequences}, thus one obtains similar results to the Arnoux--Rauzy case.

\begin{corollary}\label{cor:Transcendence of Cassaigne--Selmer}
    Let $\cA \subset \overline{\QQ}$ with $|\cA| = 3$.
    Let $X_{\bsigma} \subset \cA^{\NN}$ be an Cassaigne--Selmer shift generated by a recurrent sequence $\bsigma$.
    If $\cC_{\bsigma}$ forms a tiling of $\bone^\perp$ with compact sets,
    then $(w)_{\beta}$ is transcendental for all $w \in X_{\bsigma}$ and $\beta \in \overline{\QQ}$ with $|\beta| > 1$.
\end{corollary}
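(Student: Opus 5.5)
This is a direct application of Theorem~\ref{thm:main}; the plan is to check its hypotheses for Cassaigne--Selmer directive sequences. Let $\bsigma = (\gamma_{i_n})_{n\in\NN}$ be a recurrent directive sequence of a Cassaigne--Selmer shift. First, each $\gamma_i$ is unimodular. The incidence matrices are
\[
M_{\gamma_1} = \begin{pmatrix} 1&1&0\\ 0&0&1\\ 0&1&0 \end{pmatrix}, \qquad
M_{\gamma_2} = \begin{pmatrix} 0&1&0\\ 1&0&0\\ 0&1&1 \end{pmatrix},
\]
with rows and columns indexed by $\mathtt{a},\mathtt{b},\mathtt{c}$. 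Expanding along the first column of $M_{\gamma_1}$ and the last column of $M_{\gamma_2}$ gives $\det M_{\gamma_1} = -1$ and $\det M_{\gamma_2} = -1$. Hence every $\sigma_{[m,n)}$ is unimodular.

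Second, by definition $(i_n)$ does not end with a sequence in $\{11,22\}^{\NN}$. By \cite[Lemma~1]{Cassaigne17SetSequences}, $\bsigma$ is therefore primitive. Recurrence is assumed in the corollary, and the tiling of $\bone^\perp$ by $\cC_{\bsigma}$ with compact sets is also assumed.

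Third, $X_{\bsigma}$ is dendric by \cite[Proposition~6]{Cassaigne17SetSequences}. Dendricity as defined above presupposes minimality, which follows from primitivity of $\bsigma$. All hypotheses of Theorem~\ref{thm:main}, including its second part, now hold. So $(w)_\beta$ is transcendental for every $w \in X_{\bsigma}$ and every algebraic $\beta$ with $|\beta|>1$.

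The only point needing care is to check that the notion of dendricity in \cite{Cassaigne17SetSequences} agrees with the return-word definition used here. The paper already cites \cite{Gheeraert25AlgebraicCharacterization} for the equivalence of the various definitions, so nothing beyond that citation should be required.
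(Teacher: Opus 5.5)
Your proposal is correct and follows the paper's own route. Primitivity comes from \cite[Lemma~1]{Cassaigne17SetSequences}, dendricity from \cite[Proposition~6]{Cassaigne17SetSequences}, recurrence and the compact tiling are hypotheses, and then the second part of Theorem~\ref{thm:main} applies. Your computation $\det M_{\gamma_1} = \det M_{\gamma_2} = -1$ verifies the unimodularity hypothesis, which the paper leaves implicit.
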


Note that for almost every Cassaigne--Selmer shift, the collection $\cC_{\bsigma}$ forms a tiling of $\bone^\perp$ with compact sets by \cite[Theorem~6.2]{Berthe23Multidimensionalcontinued}.

\subsection{Echoing words}
Echoing words were introduced in Kebis et al.\,~\cite{Kebis23TranscendenceNumbers,Kebis24TranscendenceNumbers} with the goal of proving the transcendence of numbers of the form
\[
(w_0w_1\cdots)_{\beta} := \sum_{i=0}^\infty \frac{w_i}{\beta^i},
\]
with algebraic number $\beta \in \overline{\QQ}$ such that $|\beta| > 1$ and $w = w_0w_1\cdots \in \cA^{\NN}$ for some finite digit set $\cA \subset \overline{\QQ}$.
As usual, the field $\overline{\QQ}$~denotes the field of algebraic numbers. 

We slightly modify the definition of Kebis et al.\ to suit our needs.
In particular, we consider only mismatches $w_i \ne w_{i+r_{k}}$ on prefixes of $w$ of finite length~$s_{k}$ (depending on the mismatch density~$\varepsilon$) because it is difficult the control the long-range behaviour of $\cS$-adic shifts, and we do not require a condition on the gaps between consecutive intervals in~$\cI_{k}$ (because we will show that large gaps can be achieved by considering a smaller~$\varepsilon$; see Lemma~\ref{lem:Expanding Gap}).

\begin{definition}[Echoing word] \label{def:echoing}
Let $\cA$ be a finite alphabet.
An infinite word $w = w_0w_1\cdots \in \cA^{\NN}$ is called \emph{echoing} if, for each $\varepsilon>0$, there exist $N \in \NN$, unbounded sequences of natural numbers $(r_{k})_{k \in \NN}$, $(s_{k})_{k \in \NN}$, and sets $\cI_{k} \subset \{0,1,\dots,s_{k}{-}1\}$, with the following properties for all $k \in \NN$:
\begin{enumerate}[\upshape (i)]
\itemsep1ex
\item \label{i:241}
$\{0 \le i < s_{k} \,:\, w_i \ne w_{i+r_{k}}\} \subset \cI_{k}$, 
\item \label{i:242}
 $r_{k} + | \cI_{k} | < \varepsilon\, s_{k}$,
\item \label{i:243}
$\cI_{k}$ is a union of at most $N$ intervals of natural numbers. 
\end{enumerate}
\end{definition}

\begin{definition}[$\beta$-echoing, strong echoing word] \label{def:strongechoing}
An echoing word $w = w_0w_1\cdots \in \cA^{\NN}$ with $\cA \subset \overline{\QQ}$ is called \emph{$\beta$-echoing} for $\beta \in \overline{\QQ} \setminus \{0\}$ if, for each $\varepsilon > 0$, the sequences $(r_{k})_{k \in \NN}$ and $(s_{k})_{k \in \NN}$ in Definition~\ref{def:echoing} can be chosen so that there exist sequences of natural numbers $(l_k)_{k\in\NN}$, $(p_{k})_{k\in\NN}$ and $(q_{k})_{k\in\NN}$ with the following properties:
\begin{enumerate}[\upshape (i)]
\itemsep1ex
\setcounter{enumi}{3}
\item \label{eq:upper bound of p,q}
$0 \le l_k \le p_{k} < q_{k} < s_{k}/2$  for all $k \in \NN$,  $\lim_{k\to\infty} l_k/\log s_k = \infty$, 
\item \label{eq:non vanishing}
$\sum_{i=p_{k}}^{q_{k}} w_i \beta^{-i} \ne \sum_{i=p_{k}}^{q_{k}} w_{i+r_{k}} \beta^{-i}$ for all $k \in \NN$, 
\item \label{eq:expanding gap}
$w_i = w_{i+r_{k}}$ for all integers $i \in [p_{k} {-} l_k, p_{k}) \cup (q_{k}, q_{k} {+} l_k]$, $k \in \NN$.
\end{enumerate}

If $w$ is $\beta$-echoing for all $\beta \in \overline{\QQ}$ with $|\beta| > 1$, then $w$ is called \emph{strong echoing}. 
\end{definition}

Note that it is possible to replace the condition $q_k < s_k/2$ in Definition~\ref{def:strongechoing}~(\ref{eq:upper bound of p,q}) by $q_k < \delta s_k$ for a fixed $\delta < 1$.

\begin{theorem} \label{thm:S-adic echoing}
Let $\bsigma$ be a primitive and recurrent sequence of unimodular substitutions.
If the collection $\cC_{\bsigma}$ forms a tiling of $\bone^\perp$ with compact sets, then each word $w \in X_{\bsigma}$ is echoing.
If moreover $X_{\bsigma}$ is dendric, then each $w \in X_{\bsigma}$ is strong echoing.
\end{theorem}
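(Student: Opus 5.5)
We plan to prove the echoing part by coding $w$ as a translation orbit, and the strong echoing part by using dendricity to make the mismatch sums non-zero.

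\textbf{Step 1: coding map.} Since $\cC_{\bsigma}$ is a tiling of $\bone^\perp$ with compact sets, the shift $(X_{\bsigma},T)$ is measurably conjugate to the translation by $-\bff$ (projected) on the torus $\bone^\perp/(\ZZ^{\cA}\cap\bone^\perp)$. This is the $\cS$-adic analogue of the Pisot case treated in \cite{Berthe19Geometrydynamics}. Concretely, we use the representation map $\varphi$ that sends a limit word $w'$ to the point $-\sum_{i<k}(\be_{w'_i}-\bff)$ reduced into the fundamental domain. For a general $w\in X_{\bsigma}$ we obtain $\varphi$ by continuity and compactness; it is a.e. injective. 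Each letter $w_i$ is then determined by which subtile $\cR_{\bsigma}(a)$ contains the point $\bx_i=\bx_0+i\,(\text{shift vector})$ modulo the lattice. If $\bx_0$ lies on tile boundaries, we take limits of orbits along limit words.

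\textbf{Step 2: choice of $r_k$.} Let $r_k$ be return times of the rotation with $\norm{r_k\bff \bmod \ZZ^{\cA}}$ very small. Natural candidates are lengths $|\sigma_{[0,n)}(a)|$: recurrence of $\bsigma$ gives return words at every level, and by unimodularity and tiling, $M_{\sigma_{[0,n)}}$ applied to the lattice shrinks the translation error. Then $w_i\ne w_{i+r_k}$ forces $\bx_i$ to lie in a $\delta_k$-neighbourhood $\partial_{\delta_k}$ of the boundary of some subtile, where $\delta_k$ is the distance from $r_k\bff$ to the lattice.

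\textbf{Step 3: counting mismatches.} The boundaries of the tiles have measure zero, because the tiling has compact sets and the union of subtiles has the right measure. Hence $\mathrm{Leb}(\partial_{\delta})\to0$ as $\delta\to 0$.
\begin{itemize}
\item For property~(\ref{i:242}), we need the number of $i<s_k$ with $\bx_i\in\partial_{\delta_k}$ to be at most $\varepsilon s_k/2$, while $r_k<\varepsilon s_k/2$. We choose $s_k\gg r_k$ and use unique ergodicity, applying a uniform bound (via open neighbourhoods) to all starting points.
\item For property~(\ref{i:243}), we need the mismatch set to be a union of at most $N$ intervals. We get this by self-similarity: the orbit segment of length $s_k$ decomposes through $\sigma_{[0,n)}$ into boundedly many level-$n$ blocks (recurrence lets us reuse the same finite pattern). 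Mismatches then concentrate on boundedly many positions at level $n$, each of which desubstitutes to an interval. So we take $\cI_k$ to be the union of the images of the few level-$n$ blocks that meet the boundary neighbourhood, together with a prefix of length $r_k$.
\end{itemize}

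\textbf{Step 4: strong echoing.} We expect this to be the main obstacle. Assume $X_{\bsigma}$ is dendric.
\begin{itemize}
\item First we need mismatches at all. The difference $w_{[0,s)}$ versus $w_{[r,r+s)}$ cannot vanish, since $w$ is aperiodic by primitivity and unimodularity.
\item To obtain (\ref{eq:expanding gap}), we take $[p_k,q_k]$ to be one of the intervals of $\cI_k$ and get long surrounding gaps with $l_k/\log s_k\to\infty$ via Lemma~\ref{lem:Expanding Gap}, using a smaller $\varepsilon$.
\item The hard part is (\ref{eq:non vanishing}): the non-vanishing of $\sum_{i=p}^{q}(w_i-w_{i+r})\beta^{-i}$ for every algebraic $\beta$. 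Here we will use dendricity. Return words to a factor form a basis of the free group, so the two words $w_{[p,q]}$ and $w_{[p+r,q+r]}$ are distinct return-word concatenations. Dendric (tree) extension graphs force the mismatch block to have a rigid form: a single transposition $uv\leftrightarrow vu$ of two distinct return words, with a short difference. We try to arrange that the mismatch interval has bounded length, with $w_{p}\ne w_{p+r}$ at a single position or a pair of positions $ab$ versus $ba$. The difference polynomial is then $(a-b)\beta^{-p}(1-\beta^{-1})$ or $(a-b)\beta^{-p}$, which is nonzero since $|\beta|>1$.
\end{itemize}

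Proving that dendricity yields such bounded-length mismatches, where the two shifted words differ only by swapping adjacent letters across a bispecial factor, is the step we expect to require the most work.
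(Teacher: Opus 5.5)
Your Steps 1--3 essentially follow the paper's route for the echoing part, but Step 4 has a genuine gap. In Steps 1--3, be aware of what actually makes that route work. First, $r$ and $s$ are fixed once, at a level $n_k$ where recurrence puts $w^{(k)}_{[0,s+r)}$ into $\cL_{\bsigma}$, and the shift is $r_k=\norm{M_{\sigma_{[0,n_k)}}\round{r\bff}}_1$. Second, the bad set at level $n_k$ must also contain the positions with $\bl(w^{(k)}_{[i,i+r)})\ne\round{r\bff}$; otherwise the good positions do not all transfer to the same shift $r_k$. Third, bad positions must be counted letter by letter and weighted by $f_a$, because the lengths $|\sigma_{[0,n_k)}(a)|$ may differ wildly between letters. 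Your claim that $M_{\sigma_{[0,n)}}$ shrinks the translation error is not justified by the hypotheses, and it is not needed.

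The gap is condition~(\ref{eq:non vanishing}). Dendricity does not force the discrepancy between $w$ and $T^{r}w$ to consist of bounded transpositions $ab\leftrightarrow ba$; that is a Sturmian phenomenon. Take the Tribonacci word $w=abacabaabacababacabaabac\cdots$, an Arnoux--Rauzy word covered by the theorem, with $u=abacaba$ and $r=|u|=7$. The first two maximal mismatch blocks are $w_{[7,13)}=abacab$ versus $w_{[14,20)}=bacaba$, and $w_{[20,24)}=abac$ versus $w_{[27,31)}=caba$. These are exactly $uv$ versus $vu$ for the return words $abacab$ and $abac$ of~$u$. Worse, a block chosen independently of~$\beta$ can fail. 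The first block contributes $\beta^{-7}(1-\beta^{-1})\big((a-b)(1+\beta^{-4})+(a-c)\beta^{-2}\big)$, which vanishes for the digits $(a,b,c)=(3,2,0)$ and $\beta=i(1+\sqrt5)/2$, where $|\beta|>1$. Likewise, taking $[p_k,q_k]$ to be an interval from Lemma~\ref{lem:Expanding Gap} secures (\ref{eq:expanding gap}) but says nothing about (\ref{eq:non vanishing}). Lemma~\ref{lem:beta-Expanding Gap} goes the other way: it starts from a block already known to satisfy (\ref{eq:non vanishing}).

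The missing idea is to let the block depend on $\beta$ and to use the free group.
\begin{itemize}
\item The paper takes $r_k=|u|$, where $u$ is a factor of $\sigma_{[0,n_k)}(vv_0)$ of length $|\sigma_{[0,n_k)}(v)|$ and every occurrence of $u$ is followed by a prefix of $u$ of length $l_k$ with $l_k/\log s_k\to\infty$. This is Lemma~\ref{lem:expanding gap on return words}, proved from the complexity $(|\cA|-1)n+1$, unimodularity and Fine--Wilf.
\item At an occurrence of $uvu$, with $uv$ a return word, the shift by $r_k$ compares $uv$ with $vu$. The two words agree on $l_k$ letters on each side, which gives (\ref{eq:expanding gap}).
\item For (\ref{eq:non vanishing}), Lemma~\ref{lem:returnwords} shows the following: if $\mathrm{ev}_\beta(P(uv))=\mathrm{ev}_\beta(P(vu))$ for all return words, then the subgroup of $F_{\cA}$ they generate contains at most one letter. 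Dendricity makes $R_{X_{\bsigma}}(u)$ a basis of $F_{\cA}$, so for every $\beta$ some return word gives a non-vanishing block (Lemma~\ref{lem:strong echoing and return word}).
\item One must still ensure that all return words of $u$ occur in $w_{[0,s_k/2)}$. This uses Lemma~\ref{lem:density0}(\ref{i:d03})--(\ref{i:d04}).
\end{itemize}
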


The following theorem states results of Kebis et al.\ \cite{Kebis23TranscendenceNumbers,Kebis24TranscendenceNumbers,Kebis26TranscendencePisot} with our definition of ($\beta$-)echoing words. 

\begin{theorem}\label{thm:echoing transcendence}
Let $w \in \cA^{\NN}$ be an echoing word with $\cA \subset \overline{\QQ}$.
Let $\beta \in \overline{\QQ}$ with~$|\beta|>1$.
Then $(w)_{\beta}$ is either transcendental or in $\QQ(\beta,\cA)$.
If $w$ is $\beta$-echoing, then $(w)_{\beta}$ is transcendental.
\end{theorem}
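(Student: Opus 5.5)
We follow the subspace-theorem strategy of Kebis et al.\ \cite{Kebis26TranscendencePisot}. Suppose $\xi = (w)_\beta$ is algebraic, and let $K$ be a number field containing $\beta$, $\cA$ and $\xi$. Fix $\varepsilon>0$, to be chosen small in terms of $[K:\QQ]$ and the heights of $\beta$ and the digits. Take $N$, $(r_k)$, $(s_k)$, $(\cI_k)$ as in Definition~\ref{def:echoing}. Consider
\[
\beta^{r_k}\xi - \xi = \sum_{i<r_k} w_i\beta^{r_k-i} + \sum_{i\ge 0} (w_{i+r_k}-w_i)\beta^{-i}.
\]
By (\ref{i:241}) and (\ref{i:243}), the part of the second sum with $i<s_k$ is supported on at most $N$ intervals $[a_j,b_j)\subset[0,s_k)$. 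On each such interval we split the sum as a contribution at $\beta^{-a_j}$ and a contribution at $\beta^{-b_j}$, by writing $\sum_{i=a_j}^{b_j-1}(w_{i+r_k}-w_i)\beta^{-i}$ as the difference of two tail sums. Concretely, the differences of tails $\beta^{-m}(T^m w)_\beta$ give linear forms in the quantities $\xi, \beta^{r_k}\xi, \beta^{a_j}\cdots$.

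More cleanly, we build the vector
\[
\bx_k = \bigl(\beta^{r_k}, 1, (\beta^{t}\,\text{-multiples of the boundary terms})\bigr)
\]
with at most $2N+2$ coordinates. All coordinates are algebraic integers times $\beta^{\text{exponents}\le r_k+|\cI_k|}$, up to a common denominator. The key point is that each interval sum $\sum_{i\in[a_j,b_j)} c_i\beta^{-i}$, once multiplied by $\beta^{b_j}$, is an $S$-integer of height $\exp(O(b_j-a_j))$. Then $\beta^{r_k}\xi-\xi$ minus the head term and the $N$ interval terms equals a tail of size $O(|\beta|^{-s_k})$. The total height of the vector is $\exp(O(r_k+|\cI_k|))\le \exp(O(\varepsilon s_k))$ by (\ref{i:242}). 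This is the small-linear-form situation.

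We then apply the $\mathfrak p$-adic subspace theorem with $S$ containing the archimedean places and the places where $\beta$ or the digits are non-integral. Take the linear forms $L_1 = X_0\xi - X_1\xi - (\text{head}) - \sum_j(\text{interval terms})$ at the archimedean place, and the coordinate forms elsewhere. Because the entries are $S$-units times bounded-height integers, the product formula gives $\prod_v\prod_i |L_{i,v}(\bx_k)|_v \ll H(\bx_k)^{-\delta}$ once $\varepsilon$ is small, using $|\beta|^{-s_k}$ against $H\le e^{C\varepsilon s_k}$. Since there are at most $N$ intervals, the number of coordinates is bounded. Passing to a subsequence with a fixed combinatorial pattern, all $\bx_k$ lie in finitely many proper subspaces, and infinitely many lie in one of them. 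A nontrivial linear relation among $\beta^{r_k}$, $1$ and the interval terms (with unbounded $r_k$) is then exploited by induction on the number of coordinates, exactly as in \cite{Kebis26TranscendencePisot}. Eliminating coordinates one at a time, we end with a relation forcing $\xi(\beta^{r}-1)\in\QQ(\beta,\cA)$ for some $r$, hence $\xi\in\QQ(\beta,\cA)$. This gives the dichotomy.

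For the $\beta$-echoing case, we additionally use the window $[p_k,q_k]$. By (\ref{eq:expanding gap}), the mismatch set is separated from its complement by gaps of length $l_k\gg\log s_k$. If $\xi\in\QQ(\beta,\cA)$, then $\xi$ has eventually a fixed linear recurrence structure, and the same subspace argument restricted to the block $[p_k,q_k]$ separates that block's contribution, scaled by $\beta^{p_k}$, as its own coordinate. The gaps make the neighbouring terms $|\beta|^{-l_k}=s_k^{-\omega(1)}$-small relative to the heights. The degenerate relation obtained then forces $\sum_{p_k}^{q_k}w_i\beta^{-i}=\sum_{p_k}^{q_k}w_{i+r_k}\beta^{-i}$, contradicting (\ref{eq:non vanishing}). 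The main obstacle is the inductive elimination of subspace relations: showing that the relation produced by the subspace theorem cannot involve the unbounded-exponent coordinates nontrivially. We would handle this by comparing growth rates of $|\beta|^{r_k}$ against the bounded coefficients of the relation, which are fixed along a subsequence.
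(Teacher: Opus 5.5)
Your framework (small linear form given by the tail beyond $s_k$, $S$-integer coordinates, product formula, Proposition~\ref{prop:p-adic Subspace Theorem}) is the right one. The gap is at the step you flag as the main obstacle: your proposed mechanism does not work there.

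The subspace theorem gives a fixed relation $a_1\beta^{r_k}+a_2+a_3z_{k,3}+\sum_{j\in J}a_{j+2}z_{k,j+2}=0$ along a subsequence. Dividing by $\beta^{r_k}$ and comparing growth only yields $a_1+a_3(w)_\beta=0$. It cannot exclude $a_3=0$, and in that case you must rule out a nontrivial relation $a_2+\sum_{j\in J}a_{j+2}z_{k,j+2}=0$ among the block sums, i.e.\ separate the blocks from one another. A size comparison at $\nu_0$, even with a Liouville bound, would need gaps between consecutive intervals comparable to their lengths, which may be of order $\varepsilon s_k$.

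The paper separates the blocks with two ingredients absent from your proposal. Deferring to Kebis et al.\ does not supply them, since Definition~\ref{def:echoing} has no gap hypothesis.
\begin{itemize}
\item Lemma~\ref{lem:Expanding Gap} merges intervals at distance less than $s_k^{1/2}$, so that $d(I_{k,j},I_{k,j+1})/\log s_k\to\infty$.
\item The gap principle of Lemma~\ref{lem:Common root} is applied to $\beta^{s_k}F(\bz_k)$, a polynomial with $O(s_k)$ terms whose coefficients have bounded height.
\end{itemize}
This forces each block to vanish separately: $a_{j+2}z_{k,j+2}=0$ for $j\in J$, and $a_1\beta^{r_k}+a_2+a_3z_{k,3}=0$. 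Only then does $a_3=0$ give $F=0$.

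The $\beta$-echoing part has no working mechanism. The claim that $(w)_\beta\in\QQ(\beta,\cA)$ gives $w$ an eventual linear-recurrence structure is unfounded. Moreover, once $(w)_\beta\in\QQ(\beta,\cA)$, the relation produced by the first argument genuinely holds (in effect it says $z_{k,3}=(w)_\beta(\beta^{r_k}-1)$, your own endpoint). So rerunning the same subspace argument yields no contradiction.

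The paper proceeds differently.
\begin{itemize}
\item It uses that relation $F=a_1x_1+a_2x_2+a_3x_3$, $a_3\neq0$, to form $L'=F+a_3L$. This has no $x_1,x_3$ terms and still satisfies $|L'(\bz_k)|_{\nu_0}\ll|\beta|_{\nu_0}^{-s_k}$.
\item It takes as distinguished coordinate a non-vanishing block $z_{k,j_0+2}$ with $\max I_{k,j_0}<(1/2+\varepsilon)s_k$, supplied by Lemma~\ref{lem:beta-Expanding Gap}.
\item Since the $\nu_0$-factor of that coordinate is now omitted from the product, it needs the Liouville bound of Lemma~\ref{lem:generalized liouville inequality}, $|z_{k,j_0+2}|_{\nu_0}^{-1}\ll|\beta|_{\nu_0}^{11s_k/16}$. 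This is exactly where (\ref{eq:non vanishing}) and $q_k<s_k/2$ enter.
\item The new relation is then killed by Lemma~\ref{lem:Common root} again.
\end{itemize}

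Your use of (\ref{eq:expanding gap}) is misdirected. $|\beta|^{-l_k}=s_k^{-\omega(1)}$ is negligible only against quantities polynomial in $s_k$, not against heights $e^{O(s_k)}$. The condition $l_k/\log s_k\to\infty$ is what the gap principle needs.

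Finally, the bound $H(\bx_k)\le e^{O(\varepsilon s_k)}$ fails in general for block sums $\sum_{i\in I_{k,j}}c_i\beta^{-i}$, because places with $|\beta|_\nu<1$ contribute factors exponential in $s_k$. Rescaling individual coordinates by $\beta^{b_j}$ or $\beta^{p_k}$ to avoid this makes the linear forms depend on $k$, which the subspace theorem does not allow. What holds, and suffices, is $\prod_{\nu\in S}|z_{k,j}|_\nu\le e^{O(\varepsilon s_k)}$ for each coordinate together with $H(\bz_k)=e^{O(s_k)}$.
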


We divide this theorem into two propositions, Proposition~\ref{prop:Echoing word non algebraic} and \ref{prop:Strong echoing implies transcendence}.

\section{Transcendence of echoing words}\label{sec:Transcendence of echoing words}
\subsection{$\mathfrak{p}$-adic subspace theorem and other tools}
In this subsection, we recall tools from Diophantine approximation.
Let $K$ be a number field with ring of integers~$\mathcal{O}_{K}$.
A~place $\nu$ of~$K$ is an equivalence class of non-trivial absolute values on~$K$ (i.e., $|x|_{\nu} \neq 1$ for some $x \in K$).
Denote by $\mathcal{M}(K)$ the set of all places of~$K$ and by $\mathcal{M}_{\infty}(K)$ the set of all Archimedean places of~$K$. Each Archimedean place~$\nu$ corresponds to an embedding~$\sigma$ of~$K$ into~$\RR$ or to a pair of complex conjugate embeddings $\sigma, \overline{\sigma}$ of $K$ into~$\mathbb{C}$. We call an Archimedean place $\nu$ \emph{real infinite} if it corresponds to a real embedding $\sigma: K \xhookrightarrow{} \RR$, and \emph{complex infinite} if it corresponds to a complex conjugate pair of embeddings $\sigma, \overline{\sigma}: K \xhookrightarrow{} \mathbb{C}$.

Let $\alpha \in K^{\times}$.
The absolute value associated to a place~$\nu$ is defined by
\[
    |\alpha|_{\nu} := \begin{cases}
        | N(\mathfrak{p}) |^{-\mathrm{ord}_{\mathfrak{p}}(\alpha)/[K:\QQ]} & \mbox{if}\ \nu\ \mbox{is non-Archimedean}, \\
        |\sigma(\alpha)|^{1/[K:\QQ]} & \mbox{if}\ \nu\ \mbox{is real infinite}, \\
        |\sigma(\alpha)|^{2/[K:\QQ]} & \mbox{if}\ \nu\ \mbox{is complex infinite}
    \end{cases}
\]
where a prime ideal~$\mathfrak{p}$ of~$\mathcal{O}_{K}$ corresponds to the non-Archimedean place~$\nu$, $N(\mathfrak{p}) = |\mathcal{O}_{K}/\mathfrak{p}|$ and $\mathrm{ord}_{\mathfrak{p}}(\alpha) = \max\{n \in \ZZ : \alpha \in \mathfrak{p}^{n}\}$.
Then all $\alpha \in K^{\times}$ satisfy the product formula
\begin{equation} \label{e:productformula}
\prod_{\nu\in\mathcal{M}(K)} |\alpha|_{\nu} = 1.
\end{equation}

For a point $\bx = (x_1,\dots,x_{n})$ in $K^{n}$, $n \ge 1$, the \emph{(absolute) Weil height} of $\bx$ is 
    \[
        H(\bx) := \prod_{\nu \in \mathcal{M}(K)} |\bx|_{\nu},
    \]
where $|\bx|_{\nu} := \max\{|x_{1}|_{\nu},\dots,|x_{n}|_{\nu}\}$ for all $\nu \in \mathcal{M}(K)$. 
Slightly abusing notation, we use $|\cdot|_{\nu}$ also for the continuation of $|\cdot|_{\nu}$ to the algebraic closure $\overline{\QQ}/K$.
We also denote the height of a polynomial $f(x) = \sum_{j=0}^{n} a_{j} x^{j} \neq 0 $ by $H(f) := H(a_{0} , \dots , a_{n})$ and the height of an algebraic number $\alpha \in \overline{\QQ}$ by $H(\alpha) := H(1,\alpha)$.

For a finite set of places $S \subset \mathcal{M}(K)$ containing $\mathcal{M}_{\infty}(K)$, we call
$\alpha \in K$ an \emph{$S$-integer} if $|\alpha|_{\nu} \leq 1$ for all $\nu \notin S$, and we denote by $\mathcal{O}_{S}$ the ring of $S$-integers of~$K$. 

In the proof of Proposition~\ref{prop:Echoing word non algebraic}, we use the following special case of the $\mathfrak{p}$-adic subspace theorem, as stated in \cite[Theorem~2.1]{Kebis26TranscendencePisot}.

\begin{proposition}\label{prop:p-adic Subspace Theorem}
    Let $K$ be a number field and $S$ a finite set of places with $\mathcal{M}_\infty(K) \subset S \subset \mathcal{M}(K)$. Let 
    $n \geq 2$, $\varepsilon>0$. Let $L$ be a non-zero linear form in $n$ variables with coefficients in $\overline{\QQ}$.
    Let $\nu_{0} \in S$ be a distinguished place. 
    Let $1 \leq j_{0} \leq n$ be distinguished index such that $x_{j_{0}}$ has non-zero coefficient in~$L$.
    Then, the set of solutions $\bx \in (\mathcal{O}_{S})^{n}$ of the inequality
    \begin{equation}\label{eq:cor:p-adic Subspace Theorem}
|L(\bx)|_{\nu_{0}}\prod_{\substack{(\nu,j) \in S\times\{1,\dots,n\} \\ (\nu,j) \neq (\nu_{0},j_{0})}}|x_{j}|_{\nu}  \leq H(\bx)^{ - \varepsilon}
    \end{equation}
    lies in finitely many proper subspaces of~$K^{n}$.
\end{proposition}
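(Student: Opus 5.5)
The plan is to derive this from the general $\mathfrak{p}$-adic subspace theorem of Schlickewei (in the formulation of Evertse and Schlickewei). We use it in the following form. Let $K'$ be a number field, $S'$ a finite set of places of $K'$ containing the Archimedean ones, and, for each $w \in S'$, let $L_{w,1},\dots,L_{w,n}$ be linearly independent linear forms in $n$ variables with coefficients in $K'$. Then for every $\delta>0$ the solutions $\bx \in (K')^n$ of
\[
\prod_{w\in S'}\prod_{j=1}^n \frac{|L_{w,j}(\bx)|_w}{|\bx|_w} \le H(\bx)^{-n-\delta}
\]
lie in finitely many proper subspaces. The work is to choose the forms $L_{w,j}$ so that the left-hand side reproduces \eqref{eq:cor:p-adic Subspace Theorem}, and then to compare the normalisations.

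\emph{Step 1: reduction to coefficients in the field.} Let $K'$ be a finite extension of $K$ containing the coefficients of $L$, and let $S'$ be the set of places of $K'$ lying above places of $S$. Every $\bx \in \mathcal{O}_S^n$ lies in $\mathcal{O}_{S'}^n$, and the Weil height is absolute, so it does not change. With the normalisation $|\cdot|_w = |\cdot|^{[K'_w:K_\nu]/[K':\QQ]}$ in the paper's conventions, for $\nu \in \mathcal{M}(K)$ and $x \in K$ we have
\[
\prod_{w\mid\nu}|x|_w = |x|_\nu .
\]
Fix one place $w_0 \mid \nu_0$. The continuation of $|\cdot|_{\nu_0}$ to $K'$ is, up to a fixed positive power $d = [K'_{w_0}:K_{\nu_0}]/[K':\QQ]$ relative to $|\cdot|_{\nu_0}$, the absolute value $|\cdot|_{w_0}$.

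\emph{Step 2: choice of the forms.} Take $L_{w_0,j_0} := L$ and $L_{w,j} := x_j$ for every other pair $(w,j) \in S' \times \{1,\dots,n\}$. Since the coefficient of $x_{j_0}$ in $L$ is non-zero, the family $\{x_j : j \ne j_0\} \cup \{L\}$ is linearly independent, so the hypothesis of the theorem holds at every place. Using the product relation of Step 1 for the factors $|x_j|_w$, the product $\prod_{w,j}|L_{w,j}(\bx)|_w$ equals
\[
|L(\bx)|_{w_0}\,\prod_{\substack{w\mid\nu_0\\ w\ne w_0}}|x_{j_0}|_w \prod_{(\nu,j)\ne(\nu_0,j_0)}|x_j|_\nu .
\]
Next compare this with the left-hand side of \eqref{eq:cor:p-adic Subspace Theorem}. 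The factors $|L(\bx)|_{w_0}$ and $\prod_{w \ne w_0}|x_{j_0}|_w$ do not exactly match $|L(\bx)|_{\nu_0}$. I expect this to be the main (if only technical) obstacle. The fix I would try first is the following. First, enlarge the exponent $\varepsilon$ absorbs nothing yet, so instead use a Galois-stable choice: take $K'$ Galois over $K$ and pick $w_0$ so that the continuation of $|\cdot|_{\nu_0}$ used in the statement is exactly $|\cdot|_{w_0}^{1/d}$. Then observe that $|x_{j_0}|_w \le |\bx|_w$ and that $\prod_{w\mid\nu_0}|\bx|_w$ is bounded by a fixed power of $H(\bx)$ for $S$-integers. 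Together these show that a solution of \eqref{eq:cor:p-adic Subspace Theorem} with exponent $\varepsilon$ yields a solution of the $K'$-inequality with some smaller exponent $\varepsilon' > 0$ depending only on $\varepsilon$, $d$ and $[K':K]$. If the bookkeeping is too lossy, the alternative is to take $L_{w,j_0} := L^{\tau}$, the Galois conjugate of $L$ adapted to $w$, for all $w \mid \nu_0$, so that all factors at $\nu_0$ are treated uniformly.

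\emph{Step 3: from $S$-integers to the normalised inequality.} For $\bx \in \mathcal{O}_{S'}^n$ we have $|\bx|_w \le 1$ for $w \notin S'$, hence $\prod_{w\in S'}|\bx|_w \ge H(\bx)$. Therefore $\prod\prod |L_{w,j}(\bx)|_w \le H(\bx)^{-\varepsilon'}$ implies
\[
\prod_{w,j}\frac{|L_{w,j}(\bx)|_w}{|\bx|_w} \le H(\bx)^{-\varepsilon'}\,H(\bx)^{-n} = H(\bx)^{-n-\varepsilon'} .
\]
The general theorem then places all such $\bx$ in finitely many proper subspaces of $(K')^n$. Intersecting these subspaces with $K^n$ gives finitely many proper subspaces of $K^n$: a proper $K'$-subspace meets $K^n$ in a proper $K$-subspace, because a non-zero $K'$-linear relation can be replaced by a non-zero $K$-linear one after taking traces with respect to a $K$-basis of $K'$. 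This yields the claim.
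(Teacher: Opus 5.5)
The paper gives no proof of this proposition. It quotes it from \cite[Theorem~2.1]{Kebis26TranscendencePisot}, where it is the $S$-integer form of Schlickewei's subspace theorem for linear forms with coefficients in $\overline{\QQ}$, with absolute values continued to $\overline{\QQ}$. That theorem is applied with $L_{\nu_0,j_0}=L$ and $L_{\nu,j}=x_j$ for all other $(\nu,j)\in S\times\{1,\dots,n\}$, so the product of the forms is literally the left-hand side of \eqref{eq:cor:p-adic Subspace Theorem}.

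Your Steps~1 and~3 are sound: passing to $K'$ and $S'$, using $\prod_{w\in S'}|\bx|_w\ge H(\bx)$ for $S'$-integral $\bx$, and intersecting proper $K'$-subspaces with $K^n$ (these stay proper because $K^n$ spans $(K')^n$).

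The gap is in Step~2 as you primarily set it up, with $L$ only at $w_0$ and $x_{j_0}$ at the other places above $\nu_0$. Put $\theta:=[K'_{w_0}:K_{\nu_0}]/[K':K]\in(0,1]$. This is the exponent with $|\cdot|_{w_0}=|\cdot|_{\nu_0}^{\theta}$; it is not $[K'_{w_0}:K_{\nu_0}]/[K':\QQ]$. For $\bx\in K^n$ one has $\prod_{w\mid\nu_0,\,w\ne w_0}|x_{j_0}|_w=|x_{j_0}|_{\nu_0}^{1-\theta}$, so your product of forms equals
\[
\Pi^{\theta}\,Q^{1-\theta},\qquad \Pi:=|L(\bx)|_{\nu_0}\prod_{(\nu,j)\ne(\nu_0,j_0)}|x_j|_\nu,\quad Q:=\prod_{\nu\in S}\prod_{j=1}^{n}|x_j|_\nu .
\]
As soon as $\nu_0$ has more than one extension to $K'$, this is not small. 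An example is $K=\QQ$, $S=\{\infty\}$, $L=x_1-\sqrt2\,x_2$, $K'=\QQ(\sqrt2)$, $\theta=1/2$.
\begin{itemize}
\item $Q\ge1$ for $S$-integral $\bx$ with non-zero coordinates, and is typically of size $H(\bx)^n$.
\item After normalising by $\prod_{w\in S'}|\bx|_w^{n}$, the hypothesis $\Pi\le H(\bx)^{-\varepsilon}$ yields only the bound $H(\bx)^{-\theta(n+\varepsilon)}$, not $H(\bx)^{-n-\varepsilon'}$.
\item In the example, for primitive $\bx$ with $|x_1|\asymp|x_2|$, the normalised product is $\asymp\Pi^{1/2}H(\bx)^{-1}$. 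This is $\le H(\bx)^{-2-\varepsilon'}$ only if $\Pi\le H(\bx)^{-2-2\varepsilon'}$.
\end{itemize}
The loss is in the main exponent $n$, so no smaller $\varepsilon'$ can absorb it. Your auxiliary claim that $\prod_{w\mid\nu_0}|\bx|_w$ is bounded by a power of $H(\bx)$ also fails for $S$-integers, since $\bx=(m,\dots,m)$ has height~$1$.

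The option you list as a fallback is not optional; it is exactly what is needed, and it makes the comparison exact rather than lossy.
\begin{enumerate}
\item Take $K'/K$ Galois, and write $\lVert\cdot\rVert_w$ for the absolute value of $w$ extending the usual one on the completion $\QQ_p$ of $\QQ$ below $\nu_0$.
\item For each $w\mid\nu_0$ choose $g_w\in\mathrm{Gal}(K'/K)$ with $\lVert\alpha\rVert_w=\lVert g_w\alpha\rVert_{w_0}$.
\item Set $L_{w,j_0}:=g_w^{-1}(L)$ (coefficients conjugated), keeping $L_{w,j}=x_j$ for $j\ne j_0$. Independence is preserved because the $x_{j_0}$-coefficient stays non-zero.
\item For $\bx\in K^n$ you get $\lVert L_{w,j_0}(\bx)\rVert_w=\lVert L(\bx)\rVert_{w_0}$. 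Since $\sum_{w\mid\nu_0}[K'_w:\QQ_p]/[K':\QQ]=[K_{\nu_0}:\QQ_p]/[K:\QQ]$,
\[
\prod_{w\mid\nu_0}|L_{w,j_0}(\bx)|_w=\lVert L(\bx)\rVert_{w_0}^{[K_{\nu_0}:\QQ_p]/[K:\QQ]}=|L(\bx)|_{\nu_0}.
\]
\item For the remaining factors, $\prod_{w\mid\nu}|x_j|_w=|x_j|_\nu$.
\end{enumerate}
So the $K'$-product coincides with the left-hand side of \eqref{eq:cor:p-adic Subspace Theorem}, and your Step~3 applies with $\delta=\varepsilon$. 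Alternatively, starting from a version of the subspace theorem that already allows coefficients in $\overline{\QQ}$, as the cited source does, makes Steps~1--2 unnecessary.
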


The following lemma on common roots of polynomials is proved in \cite[Proposition~2.3]{Lenstra99Findingsmall}; see also \cite[Proposition~2.1]{Kebis26TranscendencePisot}

\begin{lemma}\label{lem:Common root}
Let $f,g,h \in K[x]$, $n \in \NN$, such that $f(x) = x^{n} g(x) + h(x)$, and let $\beta$ be a root of~$f$ which is not a root of unity. 
If $n - \deg(h) > \frac{\log(\ell H(f))}{\log(H(\beta))}$, where $\ell$ is the number of non-zero terms of~$f$, then $\beta$ is a common root of $g$ and~$h$.
\end{lemma}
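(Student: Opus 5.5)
The plan is to argue by contradiction with the product formula~\eqref{e:productformula}, comparing the sizes of $h(\beta)$ at all places of a number field. First we may enlarge $K$ to contain $\beta$; heights do not depend on this choice. Since $\beta$ is not a root of unity we may assume $\beta \ne 0$ (otherwise the hypothesis, which requires $\log H(\beta) > 0$, is vacuous). Then Kronecker's theorem gives $H(\beta) > 1$, so the hypothesis is meaningful.

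Since $n > \deg(h)$, the monomials of $x^n g(x)$ and of $h(x)$ have disjoint supports. Hence the non-zero coefficients of $g$ and of $h$ are all among the coefficients of $f$, and each of $g,h$ has at most $\ell$ non-zero terms. From $f(\beta) = 0$ we get $h(\beta) = -\beta^n g(\beta)$, so $h(\beta) = 0$ if and only if $g(\beta) = 0$. Therefore it suffices to show $h(\beta) = 0$. Suppose instead $h(\beta) \ne 0$; then $\prod_{\nu} |h(\beta)|_\nu = 1$ by the product formula.

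Put $d = \deg(h)$. We bound each factor using the two available expressions for $h(\beta)$, separately on two sets of places.
\begin{itemize}
\item At places with $|\beta|_\nu \ge 1$, use $h(\beta)$ directly. This gives $|h(\beta)|_\nu \le \ell_\nu\, |f|_\nu \max(1,|\beta|_\nu)^{d}$, where $\ell_\nu = \ell^{[K_\nu:\QQ_\nu]/[K:\QQ]}$ at Archimedean places (triangle inequality over at most $\ell$ terms) and $\ell_\nu = 1$ at non-Archimedean places (ultrametric inequality).
\item At places with $|\beta|_\nu < 1$, use $h(\beta) = -\beta^n g(\beta)$. Here every power $|\beta|_\nu^j$ is at most $1$, so $|g(\beta)|_\nu \le \ell_\nu |f|_\nu$, and hence $|h(\beta)|_\nu \le \ell_\nu |f|_\nu |\beta|_\nu^{n}$.
\end{itemize}

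Multiplying over all places, $\prod_\nu \ell_\nu = \ell$ and $\prod_\nu |f|_\nu = H(f)$. Moreover, $\prod_{|\beta|_\nu \ge 1} |\beta|_\nu = H(\beta)$, and by the product formula for $\beta$ also $\prod_{|\beta|_\nu < 1} |\beta|_\nu = H(\beta)^{-1}$. Hence
\[
1 \le \ell\, H(f)\, H(\beta)^{d}\, H(\beta)^{-n},
\]
that is, $n - \deg(h) \le \log(\ell H(f))/\log H(\beta)$. This contradicts the hypothesis, so $h(\beta) = 0$, and then $g(\beta) = 0$ as well.

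The main obstacle is the key trick of splitting the places according to whether $|\beta|_\nu < 1$ or $|\beta|_\nu \ge 1$, and switching between the two expressions for $h(\beta)$. This removes any dependence on the unknown degree of $g$. The rest is bookkeeping: tracking the normalisations of the local factors $\ell_\nu$ so that their product is exactly $\ell$.
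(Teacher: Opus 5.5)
Your proof is correct and is essentially the standard argument of \cite[Proposition~2.3]{Lenstra99Findingsmall}, which the paper cites instead of reproving: apply the product formula to $h(\beta)\neq 0$, bound $h(\beta)$ directly at places with $|\beta|_\nu\ge 1$ and through $h(\beta)=-\beta^{n}g(\beta)$ at places with $|\beta|_\nu<1$, and obtain $H(\beta)^{n-\deg h}\le \ell H(f)$. The one step you leave implicit is $n>\deg h$, which you need so that the coefficients of $g$ and $h$ are coefficients of $f$; it follows from the hypothesis, because $\ell\ge 1$ and $H(f)\ge 1$ make the right-hand side non-negative.
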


\subsection{Dichotomy for echoing words}
In this section, we prove the first part of Theorem~\ref{thm:echoing transcendence}, i.e., that $(w)_\beta$ is in $\QQ(\beta,\cA)$ or transcendental when $w$ is echoing.

We first remark that the definition of echoing in \cite{Kebis24TranscendenceNumbers} contains a condition on the gaps between successive intervals in~$\cI_{k}(w)$.
We now show that this condition is implicitly contained in our Definition~\ref{def:echoing}.
For two non-empty finite sets of integers $I,I'$, we write $I < I'$ if $\max I < \min I'$, and we set $d(I,I') := \min I' - \max I$ if $I < I'$.
For two functions $f, g: \NN \to \RR_{>0}$, we denote $f(n) \ll g(n)$ if there exists a constant $C > 0$ such that for all sufficiently large $n$, $C f(n) \leq g(n)$.

\begin{lemma}\label{lem:Expanding Gap}
    Let $w \in \cA^{\NN}$ be an echoing word with $\cA \subset \overline{\QQ}$.
    For each $\varepsilon > 0$, the constant $N \in \NN$ and the sequences $(r_k), (s_k), (\cI_k)$ of Definition~\ref{def:echoing} can be chosen such that strictly increasing sequences $r_k$, $s_k$ and $\cI_{k} = \bigcup_{i=1}^{N} I_{k,j}$ with non-empty ordered intervals of natural numbers $I_{k,1} < I_{k,2} < \cdots < I_{k,N}$ satisfying $0 \in I_{k,1}$ and
    \begin{equation} \label{e:dIk}
        \lim_{k\to\infty} \frac{d(I_{k,j},I_{k,j+1})}{\log s_{k}} = \infty \mbox{ for all } 1 \leq j < N.
    \end{equation}
\end{lemma}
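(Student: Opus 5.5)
The plan is to start from the data of Definition~\ref{def:echoing} for a smaller parameter and then enlarge the sets $\cI_k$ by merging intervals that lie close together. Fix $\varepsilon>0$ and apply Definition~\ref{def:echoing} with $\varepsilon/2$. This gives $N'$, unbounded sequences $(r_k)$ and $(s_k)$, and sets $\cI_k\subset\{0,\dots,s_k-1\}$ satisfying (\ref{i:241})--(\ref{i:243}) with $\varepsilon/2$ in place of $\varepsilon$. Both sequences are unbounded, so we may pass to a subsequence along which $r_k$ and $s_k$ are strictly increasing and $s_k\to\infty$. Passing to a subsequence preserves (\ref{i:241})--(\ref{i:243}). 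We also fix a threshold $g_k:=\lceil(\log s_k)^2\rceil$, which satisfies $g_k/\log s_k\to\infty$ and $g_k=o(s_k)$.

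Next we modify $\cI_k$ in two steps. First, adjoin $0$ by setting $\cI_k':=\cI_k\cup\{0\}$. Second, repeatedly merge consecutive maximal intervals whose gap $d(\cdot,\cdot)$ is at most $g_k$, replacing the two intervals by the integer interval spanning them. Each merge reduces the number of intervals, and $\cI_k'$ has at most $N'+1$ of them, so the process stops after at most $N'$ merges. In the resulting set $\cI_k''$, every gap between consecutive intervals exceeds $g_k$, and $0$ lies in the first interval.

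We then check the three properties for $\cI_k''$.
\begin{itemize}
\item Property (\ref{i:241}) still holds because $\cI_k''\supset\cI_k$. Moreover $\cI_k''\subset\{0,\dots,s_k-1\}$, since convex hulls of subsets of $[0,s_k)$ stay in $[0,s_k)$.
\item Property (\ref{i:243}) holds because $\cI_k''$ has at most $N'+1$ intervals.
\item For (\ref{i:242}), each merge adds at most $g_k$ points and adjoining $0$ adds at most one. Hence $|\cI_k''|\le|\cI_k|+N'g_k+1$, so $r_k+|\cI_k''|<\tfrac{\varepsilon}{2}s_k+N'g_k+1<\varepsilon s_k$ for all large $k$, because $g_k=o(s_k)$. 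We discard the finitely many $k$ where this fails.
\end{itemize}

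The number of intervals in $\cI_k''$ takes only finitely many values, between $1$ and $N'+1$. We therefore pass to a further subsequence on which this number is a constant $N$, and write $\cI_k''=I_{k,1}\cup\dots\cup I_{k,N}$ with $I_{k,1}<\dots<I_{k,N}$, all non-empty, and $0\in I_{k,1}$. Then $d(I_{k,j},I_{k,j+1})>g_k$, and $g_k/\log s_k\to\infty$ gives \eqref{e:dIk}. This $N$ depends only on $\varepsilon$ (through the chosen data), which is allowed by Definition~\ref{def:echoing}.

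The only delicate point is the choice of $g_k$. It has to grow faster than $\log s_k$ so that \eqref{e:dIk} holds, yet the merging cost $N'g_k$ has to remain below $\tfrac{\varepsilon}{2}s_k$. This works because $N'$ is fixed once $\varepsilon$ is fixed and $s_k\to\infty$, so any choice with $\log s_k\ll g_k=o(s_k)$ suffices; $(\log s_k)^2$ is one such choice.
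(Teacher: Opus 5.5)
Your proposal is correct and is essentially the paper's proof: shrink $\varepsilon$, adjoin $0$, merge consecutive intervals separated by gaps below a threshold that is $o(s_k)$ but grows faster than $\log s_k$, and pass to subsequences so that the number of intervals is constant and $r_k,s_k$ are strictly increasing. A common subsequence on which both $r_k$ and $s_k$ increase exists because $r_k<\varepsilon s_k$ forces $s_k\to\infty$ wherever $r_k\to\infty$; otherwise the only difference is cosmetic, since the paper starts from $\varepsilon/3$ and merges gaps below $s_k^{1/2}$, where you start from $\varepsilon/2$ and use $(\log s_k)^2$.
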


\begin{proof}
Let $\varepsilon > 0$.
Since $w$ is echoing, there exist $N'$, $r'_k$, $s'_k$ and $\cI'_{k}$ with the following properties for all $k \in \NN$: $\{0 \le i < s'_{k} \,:\, w_i \ne w_{i+r'_{k}}\} \subset \cI'_{k}$,  $r'_{k} + |\cI'_{k}| < \varepsilon s'_{k}/3$, and $\cI'_{k} = \bigcup_{i=1}^{N'_{k}} I'_{k,i}$ with non-empty ordered intervals of natural numbers $I'_{k,1} < I'_{k,2} < \cdots < I'_{k,N'_{k}}$ such that $N'_k \le N'$.
If $0 \not\in I'_{k,1}$, one can add the interval~$\{0\}$ to~$\cI'_{k}$ and increase the index~$j$ of $I'_{k,j}$ for $1 \leq j \leq N'_{k}$. 
Since $s'_{k}$ is unbounded, w.l.o.g.,\ assume that $r'_k+\cI'_k < \varepsilon s'_k/2$ and $s'_{k} > (2N'/\varepsilon)^2$ for all $k \in \NN$.
For $k \in \NN$, $1 \le j < N'_k$, if $d(I'_{k,j},I'_{k,j+1}) < (s'_{k})^{1/2}$, we merge $I'_{k,j}$ and $I'_{k,j+1}$ into a new interval $[\min I'_{k,j}, \max I'_{k,j+1}] \cap \NN$.
In this way, we obtain intervals $I''_{k,j}$, $1 \le j < N''_k \le N'_k$, such that $\cI''_k := \bigcup_{j=1}^{N''_k} \cI''_{k,j} \supset \cI'_k$, with
\[
r'_k + |\cI''_k| < r'_k + |\cI'_k| + (N'_k-1) (s'_{k})^{1/2} < \varepsilon s'_k.
\]
By taking a subsequence of $k \in \NN$ where $N''_k$ is constant and $r'_k, s'_k$ are strictly increasing, we have thus found sequences, $r_k$, $s_k$, $\cI_k = \bigcup_{j=1}^N I_{k,j}$ satisfying Definition~\ref{def:echoing}, with $0 \in I_{k,1} < I_{k,2} < \cdots < I_{k,N}$ and $d(I_{k,j},I_{k,j+1}) \ge s_k^{1/2}$.
Since $s_{k}^{1/2}/\log(s_{k}) \to \infty$ when $k \to \infty$, we have 
\[
    \lim_{k \to \infty} \frac{d(I_{k,j},I_{k,j+1})}{\log(s_{k})} = \infty
\]
for all $1 \leq j < N$.
\end{proof}

Now we can prove the first part of Theorem~\ref{thm:echoing transcendence}.
\begin{proposition}\label{prop:Echoing word non algebraic} 
    Let $w \in \cA^{\NN}$ be an echoing word with finite alphabet $\cA \subset \overline{\QQ}$.
    Let $\beta \in \overline{\QQ}$ with $|\beta|>1$.
    Then $(w)_{\beta}$ is either transcendental or lies in $\QQ(\beta,\cA)$.
\end{proposition}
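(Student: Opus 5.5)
We argue by contradiction in the standard way of Adamczewski--Bugeaud and Kebis et al. Assume $\xi := (w)_{\beta}$ is algebraic. Let $K$ be the number field generated by $\beta$, $\xi$ and $\cA$, and let $S$ be the set of places consisting of $\mathcal{M}_\infty(K)$ together with the finitely many places at which $\beta$, $\beta^{-1}$ or some digit in $\cA$ is not integral. Fix a small $\varepsilon>0$ and apply Lemma~\ref{lem:Expanding Gap}. This gives $N$, strictly increasing $r_k,s_k$, and $\cI_k=\bigcup_{j=1}^N I_{k,j}$ with $0\in I_{k,1}$ and gaps $d(I_{k,j},I_{k,j+1})$ growing faster than $\log s_k$. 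Write $I_{k,j}=[a_{k,j},b_{k,j}]$.

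The identity to exploit comes from comparing $w$ with its shift $T^{r_k}w$ on the prefix of length $s_k$. We have
\[
\beta^{r_k}\xi - \sum_{i<r_k} w_i\beta^{r_k-i} - \xi = \sum_{i\ge0}(w_{i+r_k}-w_i)\beta^{-i}.
\]
The right-hand side is supported, for $i<s_k$, on $\cI_k$, and the tail $i\ge s_k$ is $O(|\beta|^{-s_k})$. On each interval $I_{k,j}$ the sum $\sum_{i\in I_{k,j}}(w_{i+r_k}-w_i)\beta^{-i}$ equals $\beta^{-b_{k,j}}P_{k,j}(\beta)$, where $P_{k,j}$ is a polynomial of degree $<|I_{k,j}|$ with digit-difference coefficients. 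Since these blocks have the same structure for $\xi$ written at different shifts, we bundle them into a linear form. The variables are $x_0=\beta^{r_k}$ (coefficient $\xi$), $x_1=\xi$-coefficient $1$, $x_2=\sum_{i<r_k}w_i\beta^{r_k-i}$, and for each $j$ two $S$-integers $\beta^{b_{k,j}}$-scaled quantities $\beta^{-a_{k,j}}\cdot\beta^{|I_{k,j}|}$ and $\beta^{|I_{k,j}|}\cdot$(block polynomial). Concretely, I would take the $2N+3$ vector
\[
\bx_k = \bigl(\beta^{M_k},\ \beta^{M_k-r_k},\ \beta^{M_k-r_k}A_k,\ \beta^{M_k-b_{k,j}}P_{k,j}(\beta)\ (1\le j\le N)\bigr)\in\mathcal{O}_S^{n},
\]
with $M_k$ the largest exponent needed, and $L(\bx)=x_0\xi - x_1\xi - x_2 - \sum_j x_{3+j}$ (up to normalization). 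Then $|L(\bx_k)|$ at the distinguished Archimedean place $\nu_0$ (corresponding to $|\beta|>1$) is $\ll |\beta|^{M_k-s_k}$.

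Next we verify inequality~\eqref{eq:cor:p-adic Subspace Theorem}. The heights satisfy $H(\bx_k)\le C^{r_k+|\cI_k|}\cdot H(\beta)^{M_k}$-type bounds, because each coordinate is a polynomial in $\beta$ whose number of terms is controlled by $r_k+|\cI_k|$. The product over $S$ of $|\beta^{m}|_\nu$ equals $1$ by the product formula~\eqref{e:productformula}. Each coordinate $x_j$ therefore contributes at most $C^{\text{length}}$, which totals $C^{O(r_k+|\cI_k|)}=C^{O(\varepsilon s_k)}$, and this is beaten by the gain $|\beta|^{-s_k}$ when $\varepsilon$ is small. It is exactly here that condition~(\ref{i:242}) is used. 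Proposition~\ref{prop:p-adic Subspace Theorem} then puts infinitely many $\bx_k$ into one proper subspace, giving a nontrivial relation $\sum c_j x_{j,k}=0$ for infinitely many $k$.

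The main obstacle is turning this relation into a contradiction. Dividing by the leading power and viewing it as a polynomial identity in $\beta$, the exponents $M_k-r_k$ and the block positions $b_{k,j}$ are separated by gaps $\gg\log s_k$. Meanwhile the polynomial heights are $C^{O(s_k)}$ with only $O(s_k)$ terms, so $\log(\ell H)$ is of order only $\log s_k$ once we normalize by degree. The gaps from Lemma~\ref{lem:Expanding Gap} then let us apply Lemma~\ref{lem:Common root} repeatedly, splitting off each block; we must also handle $\beta$ a root of unity, which is impossible since $|\beta|>1$. This peels the relation into separate vanishing relations. If $c_0$ or $c_1$ survives, we get $\xi(\beta^{r_k}-1)\in\QQ(\beta,\cA)$, so $\xi\in\QQ(\beta,\cA)$. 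Otherwise we eliminate coordinates, lower $n$, and induct on the number of variables, as in \cite{Kebis26TranscendencePisot}. Making this peeling rigorous, keeping the coefficients $c_j$ fixed while indices move with $k$, is the step I expect to require the most care.
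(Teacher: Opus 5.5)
\textbf{There is a genuine gap: the field over which your final relation is defined.} This field is exactly what produces $\QQ(\beta,\cA)$ in the statement.

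You put $\xi$ into $K=\QQ(\beta,\xi,\cA)$. Then Proposition~\ref{prop:p-adic Subspace Theorem} only gives a proper subspace of $K^n$, i.e.\ a relation $\sum_j c_j x_{j,k}=0$ with $c_j\in K$. Grant the peeling. Dividing by the leading power of $\beta$ and letting $k\to\infty$ (using $\beta^{-r_k}\sum_{i<r_k}w_i\beta^{r_k-i}\to\xi$) gives only $c_0+c_2\xi=0$. That says $\xi\in K$, which is vacuous. Your step ``$\xi(\beta^{r_k}-1)\in\QQ(\beta,\cA)$'' does not follow from any relation with coefficients in $K$, because the subspace may itself be defined using $\xi$.

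The missing observation is that all your points lie in $\QQ(\beta,\cA)^n$. You can either work over $K=\QQ(\beta,\cA)$ and let the algebraic number $\xi$ enter only as a coefficient of $L$, which Proposition~\ref{prop:p-adic Subspace Theorem} allows and is what the paper does. Or you can note that points of $\QQ(\beta,\cA)^n$ lying in a proper $K$-subspace also lie in a proper $\QQ(\beta,\cA)$-subspace. Then $c_j\in\QQ(\beta,\cA)$, and $c_0+c_2\xi=0$ with $c_2\ne0$ is the conclusion.

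\textbf{Your choice of coordinates also leaves the endgame open.}

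First, $0\in I_{k,1}$, so the first block occupies exponents immediately adjacent to those of $x_1$ and $x_2$. Lemma~\ref{lem:Common root} therefore cannot split it off. If $c_2=0$, you are left with a relation between $x_1$ and the first block.

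Second, that block can vanish identically. If Lemma~\ref{lem:Expanding Gap} inserted $I_{k,1}=\{0\}$, then $0$ was not a mismatch, so $w_{r_k}=w_0$ and the block is $0$. In that case the subspace theorem may simply return the hyperplane $x_4=0$, and you learn nothing. The same happens for any block whose sum vanishes along a subsequence.

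The paper avoids both problems, and any induction on the number of variables, in two moves:
\begin{itemize}
\item It absorbs the $I_{k,1}$-block into the third coordinate $z_{k,3}$, so that $\beta^{-r_k}z_{k,3}\to\xi\ne0$.
\item It passes to a subsequence on which exactly the blocks $j\in J$ have nonzero sums, and keeps only those blocks as coordinates.
\end{itemize}
Lemma~\ref{lem:Common root} then kills all coefficients with $j\in J$. What remains is $a_1\beta^{r_k}+a_2+a_3z_{k,3}=0$, hence $a_1+a_3\xi=0$. If $a_3=0$, this would force $a_1=a_2=0$, contradicting that the subspace is proper.

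\textbf{Your justification for applying Lemma~\ref{lem:Common root} is also wrong as stated.} If the polynomial height really were $C^{O(s_k)}$, gaps of size $\gg\log s_k$ would not suffice. What makes the lemma apply is that every coefficient is one of the fixed $c_j$ times an element of the finite set $\cA\cup\{a-b : a,b\in\cA\}\cup\{1\}$. So the height is bounded independently of $k$, and only the number of terms, $O(r_k+s_k)$, grows. That is what gives $\log(\ell H)=O(\log s_k)$.
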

Since the definition of echoing word is changed, we give a modified version of the proof of Kebis et al.\ \cite{Kebis26TranscendencePisot}.
\begin{proof}\label{proof:Echoing imply non-algebraic}
    Suppose that $(w)_{\beta} \neq 0$ is algebraic.
    We will show that $(w)_{\beta} \in \QQ(\beta,\cA)$.
    Let $S$ be the finite set containing all the Archimedean places of $\QQ(\beta,\cA)$ and all finite places $\nu$ of $\QQ(\beta,\cA)$ corresponding to prime ideals $\mathfrak{p}$ of the ring of integers $\mathcal{O}_{\QQ(\beta,\cA)}$ such that $|\beta|_{\nu} \neq 1$ or $|a|_{\nu} \neq 1$ for some $a \in \cA$.
    Let $\nu_{0} \in S$ be the Archimedean place corresponding to the identity embedding $\QQ(\beta,\cA) \hookrightarrow \mathbb{C}: \beta \mapsto \beta$, $a \mapsto a$ for all $a \in \cA$.
    We have $|\beta|_{\nu_0} = |\beta|^{1/[\QQ(\beta,\cA):\QQ]}$ if $\nu_0$ is real infinite and $|\beta|_{\nu_0} = |\beta|^{2/[\QQ(\beta,\cA):\QQ]}$ if $\nu_0$ is complex infinite, thus $|\beta|_{\nu_0} > 1$.
    Let $\kappa = \max_{\nu \in S, a,b \in \cA}(|\beta|_{\nu},|a|_{\nu}, |a-b|_{\nu})$.
    Let $\varepsilon = \frac{\log(|\beta|_{\nu_0})}{4|S| \log(\kappa)}$.
    Let $r_k$, $s_k$, $\cI_{k} = \bigcup_{j=1}^N I_{k,j}$ be as in Lemma~\ref{lem:Expanding Gap}, and set $t_k := \max I_{k,1}$.
    For each $k \in \NN$, we define $\bz_{k} = (z_{k,1},\dots,z_{k,N+2}) \in (\mathcal{O}_{S})^{N+2}$ by
    \begin{equation}
    \begin{split}
        z_{k,1} &:= \beta^{r_{k}}, \quad 
        z_{k,2} := 1, \\
        z_{k,3} &:= \sum_{i=0}^{r_{k}-1} w_{i} \beta^{r_{k}-i} +  \sum_{i \in I_{k,1}} (w_{r_{k}+i} - w_{i})\beta^{-i}, \\
        z_{k,j+2} &:= \sum_{i \in I_{k,j}} (w_{r_{k}+i} - w_{i})\beta^{-i} \quad (j = 2,\dots,N).
        \end{split}
    \end{equation}
    By taking a subsequence of $k \in \NN$, we can assume w.l.o.g.\ that there exists $J \subset \{2,\dots,N\}$ such that $z_{k,j+2} \neq 0$ if and only if $j \in J$. 
    Now, define the linear form $L$ over $\bx = (x_{1}, \dots, x_{|J|+3})$ by
    \begin{equation}\label{eq:Linear Combination}
        L(\bx) := (w)_{\beta} \, x_{1} - (w)_{\beta} \, x_{2} -x_{3}-\sum_{j \in J}  x_{j+2}.
    \end{equation}
    Then
    \begin{equation*}
    \begin{split} 
        L(\bz_{k})
        & = \beta^{r_{k}} (w)_{\beta} - (w)_{\beta} - \sum_{i=0}^{r_{k}-1} w_{i} \beta^{r_{k}-i} -  \sum_{i \in \cI_{k}} (w_{r_{k}+i} - w_{i})\beta^{-i} \\
        &= \sum_{i=s_{k}}^{\infty} (w_{r_{k}+i} - w_{i})\beta^{-i}
    \end{split} 
    \end{equation*}
    and thus
    \begin{equation} \label{eq:Archimedean upper bound}
    |L(\bz_{k})|_{\nu_0} \ll |\beta|_{\nu_0}^{-s_{k}}.
    \end{equation}    
    For the other places $\nu \in S \setminus \{ \nu_{0} \}$, using the product formula $\prod_{\nu \in S} |\beta|_{\nu} = 1$, we obtain that
    \begin{equation}\label{eq:a1 absolute value v0}
        \prod_{\nu \in S \setminus \{ \nu_{0} \}} | z_{k,1} |_{\nu} = |\beta|_{\nu_{0}}^{-r_{k}}.
    \end{equation}
    Recall that $\varepsilon s_{k} > r_{k}+t_{k}$, with $\varepsilon = \frac{\log(|\beta|_{\nu_0})}{4|S| \log(\kappa)}$.
    By the product formula, we also have
    \begin{equation}\label{eq:a3 other absolute value}
        \prod_{\nu \in S} |z_{k,3}|_{\nu} = \prod_{\nu \in S} |\beta^{t_k} z_{k,3}|_{\nu}
        \leq \prod_{\nu \in S} \sum_{i=0}^{r_{k}+t_{k}} \kappa^{i+1}
        \ll \prod_{\nu \in S} \kappa^{r_{k}+t_{k}}
        = \kappa^{(r_{k}+t_{k})|S|}
        \leq |\beta|_{\nu_0}^{s_{k}/4}.
    \end{equation}
    Similarly, using that $|\cI_{k}| < \varepsilon s_{k}$ for all $k \in \NN$, we obtain that
    \begin{equation}
        \label{eq:other absolute value}
        \prod_{j \in J} \prod_{\nu \in S} |z_{k,j+2}|_{\nu}
        \leq \prod_{j \in J} \prod_{\nu \in S} \sum_{i =0}^{|I_{k,j}|-1} \kappa^{i+1} \ll |\beta|_{\nu_0}^{s_{k}/4}.
    \end{equation}
    
    In order to apply Proposition~\ref{prop:p-adic Subspace Theorem}, we use the above inequalities \eqref{eq:Archimedean upper bound}--\eqref{eq:other absolute value} and the trivial equation $\prod_{\nu \in S} |z_{k,2}|_{\nu} = 1$ to obtain
    \begin{equation}\label{eq:asymtotic subspace theorem}
        |L(\bz_{k})|_{\nu_{0}} \prod_{\substack{(\nu,j) \in S\times \{1,2,3\} \\ (\nu,j) \neq (\nu_{0} , 1)}}|z_{k,j}|_{\nu} \prod_{(\nu,j) \in S\times J}|z_{k,j+2}|_{\nu}  
        \ll |\beta|_{\nu_{0}}^{-s_{k}} |\beta|_{\nu_{0}}^{-r_{k}} |\beta|_{\nu_{0}}^{s_{k}/2}.
    \end{equation} 
    Since $r_{k} \to \infty$ as $k \to \infty$, we have 
    \begin{equation}\label{eq:subspace theorem}
        |L(\bz_{k})|_{\nu_{0}} \prod_{\substack{(\nu,j) \in S\times \{1,2,3\} \\ (\nu,j) \neq (\nu_{0} , 1)}}|z_{k,j}|_{\nu} \prod_{(\nu,j) \in S\times J}|z_{k,j+2}|_{\nu}   
        <|\beta|_{\nu_{0}}^{-s_{k}/2}
    \end{equation}
    for sufficiently large~$k$.
    As we calculated the above inequalities \eqref{eq:a1 absolute value v0}--\eqref{eq:other absolute value}, $|z_{k,2}|_{\nu_{0}} = 1$ and as $\bz_{k} \in \mathcal{O}_{S}$, there exists constants $c_{0}, c_{1}>0$ such that the height of $\bz_{k}$ satisfies
    \begin{equation}\label{eq:height of a_n}
        H(\bz_{k}) = \prod_{\nu \in \mathcal{M}(K)} |\bz_{k}|_{\nu} \leq \prod_{\nu \in S} |\bz_{k}|_{\nu} 
        \ll \kappa^{c_{0}s_{k}|S|} 
        < |\beta|_{\nu_{0}}^{c_{1}s_{k}}
    \end{equation}
    Thus the left-hand side of inequality \eqref{eq:subspace theorem} is at most $H(\bz_{k})^{-1/2c_{1}}$ for sufficiently large~$k$.

    By Proposition~\ref{prop:p-adic Subspace Theorem}, we obtain that infinitely many $\bz_{k}$ lie in finitely many proper subspaces of $\QQ(\beta,\cA)^{|J|+3}$.
    Therefore, we can find a proper subspace defined by a linear form 
    \[
        F(\bx) := a_{1}x_{1}+ a_{2}x_{2}+ a_{3}x_{3}+ \sum_{j \in J} a_{j+2}x_{j+2}
    \]
with coefficients $a_i \in \QQ(\beta,\cA)$ such that $F(\bz_{k}) = 0$ for infinitely many~$k$.
    From now on, we consider such subsequence of $k \in \NN$ such that $F(\bz_{k}) = 0$. Then, we have
    \begin{equation}
        \begin{split}\label{eq:Linear Combination 0}
        0 = F(\bz_{k}) &= a_{1}\beta^{r_{k}} + a_{2} + a_{3}\, \bigg( \sum_{i=0}^{r_{k}-1} w_{i} \beta^{r_{k}-i} +  \sum_{i \in I_{k,1}} (w_{r_{k}+i} - w_{i})\beta^{-i} \bigg) \\
        & \qquad + \sum_{j \in J} a_{j+2}\, \bigg( \sum_{i \in I_{k,j}} (w_{r_{k}+i} - w_{i})\beta^{-i} \bigg).
        \end{split}
    \end{equation}
    Multiplying this equation by $\beta^{s_k}$ gives a polynomial $f_{k}(\beta) = 0$ with coefficients in $\QQ(\beta,\cA)$.
    By the form of $f_{k}$, there exists a constant $c_{2} = H(a_1, \dots, a_{|J|+3})$ such that $H(f_{k}) \leq c_{2}\kappa$.
    Since the $I_{k,j}$ are chosen as in Lemma~\ref{lem:Expanding Gap}, we have $d(I_{k,j},I_{k,j+1}) > \frac{\log((r_{k}+s_{k})c_{2}\kappa)}{\log(H(\beta))}$ for all $j < N$ and sufficiently large $k \in\NN$. 
    By Lemma~\ref{lem:Common root}, each monomial of $f_{k}$ have same root $\beta$, thus we obtain that
    \[
    a_{j+2} z_{k,j+2} = 0
    \]
    for all $j \in J$ and sufficiently large $k$.
    Since we have chosen $J$ such that $z_{k,j+2} \neq 0$ if and only if $j \in J$, we obtain that $a_{j+2} = 0$ for all $j \in J$.
    Dividing \eqref{eq:Linear Combination 0} by~$\beta^{r_k}$ and letting $k\to\infty$, we get that
    \begin{equation}\label{eq:}
        a_1 + a_3\, (w)_\beta = 0.
    \end{equation}
     If $a_{3} = 0$, then $a_1 = 0$, thus $a_2 = 0$, contradicting that $F$ defines a proper subspace.
    If $a_3 \ne 0$, then $(w)_\beta \in \QQ(\beta, \cA)$.
\end{proof}

\subsection{Transcendence of $(w)_\beta$ when $w$ is $\beta$-echoing}
Now, prove the second part of Theorem~\ref{thm:echoing transcendence}:

\begin{proposition}\label{prop:Strong echoing implies transcendence}
    Let $\beta \in \overline{\QQ}$ with $|\beta|>1$.
    Let $w \in \cA^{\NN}$ be a $\beta$-echoing word with finite alphabet $\cA \subset \overline{\QQ}$.
    Then $(w)_{\beta}$ is transcendental.
\end{proposition}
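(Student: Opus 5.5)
By Proposition~\ref{prop:Echoing word non algebraic} it suffices to show that $(w)_\beta \notin \QQ(\beta,\cA)$, i.e., I argue by contradiction assuming $\xi := (w)_\beta \in K := \QQ(\beta,\cA)$. The plan is to rerun the subspace-theorem argument with the pair $(\xi,1)$ absorbed into the coefficients. Then $L$ has coefficients in $K$, and the conclusion forces a vanishing polynomial identity in $\beta$. The $\beta$-echoing data $(p_k,q_k,l_k)$ will then contradict this identity through Lemma~\ref{lem:Common root}.

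First I fix $\varepsilon$ as in the previous proof. I take $r_k,s_k,\cI_k$ as in Lemma~\ref{lem:Expanding Gap}, together with $l_k,p_k,q_k$ from Definition~\ref{def:strongechoing}. Some care is needed, because Lemma~\ref{lem:Expanding Gap} modifies the sequences by merging intervals and passing to subsequences. Passing to a subsequence is harmless for (\ref{eq:upper bound of p,q})--(\ref{eq:expanding gap}). Merging only enlarges $\cI_k$, so condition (\ref{i:241}) still holds.

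The key identity is
\[
(\beta^{r_k}-1)\xi - \sum_{i=0}^{r_k-1} w_i\beta^{r_k-i} = \sum_{i\ge 0}(w_{i+r_k}-w_i)\beta^{-i}.
\]
Since $\xi\in K$, the left side is an $S$-integer combination. I split the right side as $D_k + E_k$, where $D_k := \sum_{i<s_k}(w_{i+r_k}-w_i)\beta^{-i}$ is supported on $\cI_k$ and $|E_k|_{\nu_0}\ll|\beta|_{\nu_0}^{-s_k}$. The key observation is that the polynomial
\[
P_k(x) := (x^{r_k}-1)\xi - \sum_{i<r_k} w_i x^{r_k-i} - \sum_{i<s_k}(w_{i+r_k}-w_i)x^{-i}
\]
satisfies $|P_k(\beta)|_{\nu_0} \ll |\beta|_{\nu_0}^{-s_k}$. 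I apply Proposition~\ref{prop:p-adic Subspace Theorem} to vectors whose coordinates are $\beta^{r_k}$, $1$, and the blocks of $D_k$ over the intervals of $\cI_k$, with the $\xi$-terms placed in the coefficients. The height estimates \eqref{eq:a3 other absolute value}--\eqref{eq:height of a_n} carry over unchanged. I conclude that infinitely many of these vectors satisfy a fixed linear relation $F$. The gap condition \eqref{e:dIk} together with Lemma~\ref{lem:Common root} splits $F(\bz_k)=0$ into vanishing of separate blocks, exactly as in the previous proof.

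I can avoid fully redoing this by following \cite{Kebis26TranscendencePisot}. The alternative is to use the rationality $\xi\in K$ together with the exponentially small quantity $P_k(\beta)$ directly. Since $P_k(\beta)\in K$ has bounded denominators at $S$-places and controlled conjugates, a Liouville/product-formula estimate gives $P_k(\beta)=0$ for large $k$. This requires $\prod_{\nu\ne\nu_0}|P_k(\beta)|_\nu \ll \kappa^{|S|(r_k+|\cI_k|)}$ with sparse support. I would try the subspace route first and fall back on this. Either way, the target is an exact identity $\sum_{i<s_k}(w_{i+r_k}-w_i)\beta^{-i} = (\beta^{r_k}-1)\xi - \sum_{i<r_k}w_i\beta^{r_k-i}$, which then upgrades to a blockwise vanishing.

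The final step, which I expect to be the main obstacle, is extracting the contradiction with (\ref{eq:non vanishing}). By (\ref{eq:expanding gap}), the coefficients $w_{i+r_k}-w_i$ vanish on $[p_k-l_k,p_k)$ and $(q_k,q_k+l_k]$. Since $l_k/\log s_k\to\infty$, these gaps exceed the threshold $\log(\ell H(f))/\log H(\beta)$ of Lemma~\ref{lem:Common root}, because $\ell\le s_k+r_k$ and the height is bounded by a constant power of $\kappa$. I apply Lemma~\ref{lem:Common root} twice, cutting at $p_k$ and at $q_k$. This isolates the middle block and gives $\sum_{i=p_k}^{q_k}(w_{i+r_k}-w_i)\beta^{-i}=0$, contradicting (\ref{eq:non vanishing}). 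The condition $q_k<s_k/2$ ensures that the block lies well inside the range where the exact identity holds and away from the tail $E_k$. If $\beta$ is a root of unity the lemma does not apply, but then $|\beta|=1$, which is excluded. The delicate point is ensuring the polynomial identity is exact at full length $s_k$ rather than only blockwise on the $\cI_k$-decomposition. If the gaps from Lemma~\ref{lem:Expanding Gap} interfere, I will instead include $p_k,q_k$ as additional cut points when defining the coordinates $z_{k,j}$.
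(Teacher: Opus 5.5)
\paragraph{The gap: nothing in your plan produces the exact identity $P_k(\beta)=0$.} Your closing step is sound. \emph{If} you had $P_k(\beta)=0$, i.e.\ the tail $E_k$ vanishes, then $\beta^{s_k}P_k(\beta)=0$ would be a polynomial identity of bounded height with $O(s_k)$ terms. Two applications of Lemma~\ref{lem:Common root} at the gaps from (\ref{eq:expanding gap}) would then isolate the $[p_k,q_k]$-block and contradict (\ref{eq:non vanishing}). But neither of your routes yields $P_k(\beta)=0$, and this is the whole difficulty, not a technicality.

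\emph{Rerunning the subspace argument.} With the same vectors this only reproduces Proposition~\ref{prop:Echoing word non algebraic}. Lemma~\ref{lem:Common root} gives $a_{j+2}z_{k,j+2}=0$, so it kills the \emph{coefficients} $a_{j+2}$, $j\in J$; the blocks themselves are nonzero by the choice of $J$. What survives is $a_1\beta^{r_k}+a_2+a_3z_{k,3}=0$ with $a_1=-a_3\xi$ and, letting $k\to\infty$, $a_2=a_3\xi$. This relation is perfectly consistent with $\xi\in K$. It says exactly $\sum_{i>t_k}(w_{i+r_k}-w_i)\beta^{-i}=0$, an identity with an infinite tail, to which Lemma~\ref{lem:Common root} does not apply. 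Adding $p_k,q_k$ as cut points only puts the $[p_k,q_k]$-block into $J$, whose coefficient is then killed. You also cannot absorb $[p_k,q_k]$ into $z_{k,3}$: that coordinate needs span $r_k+t_k<\varepsilon s_k$ for \eqref{eq:a3 other absolute value}, while $q_k$ may be of order $s_k/2$.

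\emph{The Liouville fallback.} This fails for a structural reason. The bound $\kappa^{|S|(r_k+|\cI_k|)}$ holds for a \emph{single} block after renormalizing by a power of $\beta$ via the product formula. It does not hold for a sum of blocks spread over $[0,s_k)$. At places $\nu\in S$ with $|\beta|_\nu<1$ the monomials $\beta^{-i}$ have size up to $|\beta|_\nu^{-s_k}$. Moreover $\prod_{\nu\in S,\,|\beta|_\nu<1}|\beta|_\nu^{-s_k}\ge|\beta|_{\nu_0}^{s_k}$, which cancels the gain $|P_k(\beta)|_{\nu_0}\ll|\beta|_{\nu_0}^{-s_k}$. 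For instance, for $\beta=b\in\ZZ$, integer digits and $\xi=p/q$, the number $q\,b^{s_k}P_k(b)$ is just a bounded integer, and nothing forces it to vanish.

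\paragraph{The missing idea: a second subspace application with the high-height coordinates eliminated.} Take the relation $F=a_1x_1+a_2x_2+a_3x_3$ with $a_3\ne0$ from the first application. The paper forms $L'=F+a_3L$, which involves only $x_2$ and the block variables, and still $|L'(\bz_k)|_{\nu_0}\ll|\beta|_{\nu_0}^{-s_k}$. The distinguished variable is a \emph{nonzero} block $z_{k,j_0+2}$ sitting in $[p_k,q_k]$ and separated from the other blocks by gaps $\gg\log s_k$. This is Lemma~\ref{lem:beta-Expanding Gap}, and it is where (\ref{eq:non vanishing}) and (\ref{eq:expanding gap}) are used. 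The omitted factor $|z_{k,j_0+2}|_{\nu_0}$ is bounded below by Lemma~\ref{lem:generalized liouville inequality}, giving $|z_{k,j_0+2}|_{\nu_0}^{-1}\ll|\beta|_{\nu_0}^{11s_k/16}$; this is precisely where $q_k<s_k/2$ enters. The subspace product is then $\ll|\beta|_{\nu_0}^{-s_k/16}$. The resulting relation $G$ among $1$ and the blocks is forced to be zero by Lemma~\ref{lem:Common root}, which gives the contradiction. In your argument $q_k<s_k/2$ plays no essential role (any $q_k+l_k<s_k$ would do), which is a symptom that this lower bound is missing.
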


We use the following lemma, which follows from \cite[Proposition~2.4.1]{Bilu26Valuesgeneralized} and $|1/P(\alpha)|_{\nu} \leq H(P(\alpha))$.

\begin{lemma}\label{lem:generalized liouville inequality}
    Let $P(x) \in \overline{\QQ}[x]$ be a non-zero polynomial of degree~$n$, let $\alpha \in \overline{\QQ}$ be an algebraic number such that $P(\alpha) \neq 0$, and let $\nu$ be a place of~$\overline{\QQ}$.
    Then, we have
    \[ |P(\alpha)|^{-1}_{\nu} \leq (n {+} 1)\, H(P)\, H(\alpha)^{n}.  \]
\end{lemma}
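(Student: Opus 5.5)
The plan is to reduce the statement to an upper bound for the height of the algebraic number $\gamma := P(\alpha)$, and then to bound that height place by place. Following the remark preceding the lemma, I would use \cite[Proposition~2.4.1]{Bilu26Valuesgeneralized} for the height bound rather than reprove it. I would fix a number field $K$ containing $\alpha$ and the coefficients $a_0,\dots,a_n$ of $P$, so that $\gamma \in K^\times$, and use the normalized absolute values $|\cdot|_\nu$ of the paper.

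\emph{Step 1: from $|\gamma|_\nu^{-1}$ to a height.} Since $H(\gamma) = \prod_{\mu} \max(1,|\gamma|_\mu)$, the product formula \eqref{e:productformula} gives $H(\gamma^{-1}) = H(\gamma)$. Every factor $\max(1,|\gamma^{-1}|_\mu)$ is at least $1$, so for the given place $\nu$
\[
|\gamma|_\nu^{-1} \le \max\bigl(1,|\gamma^{-1}|_\nu\bigr) \le H(\gamma^{-1}) = H(P(\alpha)).
\]
This is the inequality $|1/P(\alpha)|_{\nu} \leq H(P(\alpha))$ mentioned before the lemma.

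\emph{Step 2: the Liouville-type bound $H(P(\alpha)) \le (n{+}1)\,H(P)\,H(\alpha)^n$.} This is what \cite[Proposition~2.4.1]{Bilu26Valuesgeneralized} provides. The argument behind it is a place-by-place estimate.
\begin{itemize}
\item At a non-Archimedean place $\mu$, the ultrametric inequality gives $|P(\alpha)|_\mu \le \max_j |a_j|_\mu \, \max(1,|\alpha|_\mu)^n$.
\item At an Archimedean place $\mu$, the triangle inequality applied to $n{+}1$ terms gives the same bound with an extra factor $(n{+}1)^{d_\mu}$. Here $d_\mu$ is the exponent $1/[K:\QQ]$ or $2/[K:\QQ]$ from the definition of $|\cdot|_\mu$.
\item These exponents sum to $1$ over $\mathcal{M}_\infty(K)$, so taking the product over all places produces exactly the constant $n{+}1$, together with $H(\alpha)^n$ and the coefficient heights.
\end{itemize}

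\emph{Main obstacle.} The only delicate point is the normalization of heights. Using the local estimates above with $\max(1,\cdot)$ gives an affine height of the coefficient vector, whereas the paper defines $H(P)$ projectively, as $H(a_0,\dots,a_n)$. As literally written with a projective $H(P)$, the inequality is not invariant under rescaling $P$ by a constant: the left side changes and the right side does not. I would therefore check carefully that the height convention in \cite{Bilu26Valuesgeneralized} matches the one needed. In the application, the relevant polynomials have coefficients that are $S$-integers built from differences of digits. There one may take the affine height, which dominates the projective one, and it is exactly the quantity the Liouville bound controls. Combining Steps 1 and 2 then gives $|P(\alpha)|_\nu^{-1} \le (n{+}1)\,H(P)\,H(\alpha)^n$.
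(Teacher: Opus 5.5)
Your proposal is correct and follows the paper's own one-line argument: you combine the height bound from Bilu's Proposition~2.4.1 for $H(P(\alpha))$ with $|P(\alpha)|_\nu^{-1} \le H(P(\alpha)^{-1}) = H(P(\alpha))$. Your normalization worry is also justified, because with the paper's projective definition $H(P) = H(a_0,\dots,a_n)$ the statement already fails for the constant polynomial $P = 1/2$ over $K = \QQ$ at the Archimedean place (there $H(P) = 1$ but $|P(\alpha)|_\nu^{-1} = 2$); the lemma should therefore be read with the affine height $H(1,a_0,\dots,a_n)$, which is exactly what your place-by-place estimate gives and which only changes a harmless constant in the later application (e.g.\ $\kappa$ becomes $\kappa^{|S|}$).
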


The following lemma is similar to Lemma~\ref{lem:Expanding Gap}.

\begin{lemma}\label{lem:beta-Expanding Gap}
    Let $\beta \in \overline{\QQ} \setminus \{0\}$ be not a root of unity.
    Let $w \in \cA^{\NN}$ be a $\beta$-echoing word with $\cA \subset \overline{\QQ}$. 
    For each $\varepsilon > 0$, the constant $N \in \NN$ and the sequences  $(l_k), (p_k), (q_k), (r_k), (s_k), (\cI_k)$ of Definitions~\ref{def:echoing} and~\ref{def:strongechoing} can be chosen such that, for all $k \in \NN$, $\cI_{k} = \bigcup_{i=1}^{N} I_{k,j}$ with non-empty ordered intervals of natural numbers $I_{k,1} < I_{k,2} < \cdots < I_{k,N}$ satisfying \eqref{e:dIk} and $p_k, q_k \in I_{k,j_0}$ for some constant $j_0 \in \{2,\dots,N\}$.
\end{lemma}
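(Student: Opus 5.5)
The plan is to redo the construction of Lemma~\ref{lem:Expanding Gap}, this time carrying along the extra data $(l_k),(p_k),(q_k)$ of Definition~\ref{def:strongechoing} and making sure the interval $[p_k,q_k]$ ends up inside a single interval of~$\cI_k$. Fix $\varepsilon>0$ and apply the $\beta$-echoing property with $\varepsilon/3$. This gives $N',r_k,s_k,\cI'_k$ satisfying (i)--(iii), together with $l_k\le p_k<q_k<s_k/2$ satisfying (iv)--(vi).

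First we observe that $[p_k,q_k]$ must meet~$\cI'_k$. Otherwise (i) gives $w_i=w_{i+r_k}$ for all $i\in[p_k,q_k]$, since $q_k<s_k$. Then the two sums in (v) coincide, which is a contradiction.

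Next we enlarge~$\cI'_k$. We add the interval $[p_k,q_k]\cap\NN$ to~$\cI'_k$, and also $\{0\}$ as in Lemma~\ref{lem:Expanding Gap}. The cost $q_k-p_k+1$ must be small compared with $\varepsilon s_k$, and this is the main obstacle. The definition does not bound $q_k-p_k$, so we cannot simply add this interval.

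The idea is to shrink $[p_k,q_k]$ to a short window with the same properties. Let $p'_k$ be the least and $q'_k$ the largest index in $[p_k,q_k]\cap\cI'_k$ with $w_i\ne w_{i+r_k}$. Such indices exist: otherwise the sums in (v) agree, because outside $\cI'_k$ no mismatch occurs. Condition (v) still holds on $[p'_k,q'_k]$, since only equal terms were removed. Condition (vi) also holds: $w_i=w_{i+r_k}$ on $[p_k-l_k,p'_k)$ and on $(q'_k,q_k+l_k]$, by (vi) and the minimality/maximality of $p'_k,q'_k$.

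However, $[p'_k,q'_k]$ may still be long, because it can span several intervals of~$\cI'_k$. We handle this with the merging step of Lemma~\ref{lem:Expanding Gap}. We merge consecutive intervals whose gap is less than $s_k^{1/2}$, and we also declare the intervals meeting $[p'_k,q'_k]$ to be merged only if their gaps are small. If some gap inside $[p'_k,q'_k]$ is at least $s_k^{1/2}$, we split the sum in (v) over the maximal merged blocks. Then some block $B$ has a nonzero difference $\sum_{i\in B}(w_i-w_{i+r_k})\beta^{-i}$. We replace $p'_k,q'_k$ by the first and last mismatch in~$B$, and set $l_k:=\min(l_k,\,s_k^{1/2})$ (or the gap). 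Since $s_k^{1/2}/\log s_k\to\infty$, (iv) still holds, and (vi) holds because the neighbouring gaps contain no mismatches. After this step, $p_k,q_k$ lie in one merged interval, whose length is at most $|\cI'_k|+N's_k^{1/2}$. So the total size of the merged set stays below $\varepsilon s_k$ for large~$k$, exactly as in Lemma~\ref{lem:Expanding Gap}.

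Finally, we pass to a subsequence on which the number $N$ of merged intervals and the index $j_0$ of the interval containing $p_k,q_k$ are constant, and on which $r_k,s_k$ increase. The gap property \eqref{e:dIk} follows from $d\ge s_k^{1/2}$. Since $0\in I_{k,1}$, we need $j_0\ge2$. If $j_0=1$, then $p_k\le\max I_{k,1}$, which is small relative to $s_k$. In that case we separate $\{0\}$ from the rest: we take $I_{k,1}=\{0\}$ only when $p_k\ge l_k>0$ is far from~$0$. The bound $l_k\le p_k$ with $l_k/\log s_k\to\infty$ shows that $0$ and $p_k$ are separated by a mismatch-free stretch of length $l_k$. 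We can therefore use $l_k$ instead of $s_k^{1/2}$ as the separating gap in \eqref{e:dIk}, since $l_k/\log s_k\to\infty$ as well. The hypothesis that $\beta$ is not a root of unity is not needed for this combinatorial argument. It only matters later, when Lemma~\ref{lem:Common root} is applied.
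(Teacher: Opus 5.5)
Your argument essentially proves the statement as literally written, after two repairs noted below, but it takes a different route from the paper and reaches a weaker conclusion.

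\textbf{The paper's route.} The paper neither shrinks $[p_k,q_k]$ nor selects merged blocks. It deletes the two mismatch-free buffers $[p'_k-l'_k,p'_k)$ and $(q'_k,q'_k+l'_k]$ from $\cI'_k$. This splits at most two intervals, so $N\le N'+2$, and leaves every interval either inside or disjoint from $[p'_k,q'_k]$. Decomposing the sum in condition~(\ref{eq:non vanishing}) over the inside intervals gives one, $I_{k,j_0}$, with non-zero sum. The paper then resets $p_k:=\min I_{k,j_0}$, $q_k:=\max I_{k,j_0}$, and takes $l_k$ as the minimum of $l'_k$ and the two adjacent gaps. The bound $j_0\ge2$ is automatic, since $0<l'_k\le p'_k$.

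\textbf{What the paper gains.} It gets $[p_k,q_k]=I_{k,j_0}$ exactly. So $z_{k,j_0+2}=\sum_{i\in I_{k,j_0}}(w_{r_k+i}-w_i)\beta^{-i}\neq0$ is immediate from~(\ref{eq:non vanishing}), and this is what the proof of Proposition~\ref{prop:Strong echoing implies transcendence} uses. Your construction gives only $p_k,q_k\in I_{k,j_0}$. Here $I_{k,j_0}$ may contain further mismatches outside $[p_k-l_k,q_k+l_k]$, because intervals of $\cI'_k$, or merges across gaps below $s_k^{1/2}$, can run across the buffers. Getting $z_{k,j_0+2}\neq0$ then needs an extra application of Lemma~\ref{lem:Common root} across the gap of length $l_k$. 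That is where the root-of-unity hypothesis would enter, consistent with your closing remark.

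\textbf{Two steps to repair.}
\begin{enumerate}
\item The non-zero block sum must be taken over $B\cap[p'_k,q'_k]$, and the new endpoints must be the first and last mismatch there. For the block containing $p'_k$ or $q'_k$, the sum over all of $B$ need not be non-zero without Lemma~\ref{lem:Common root}.
\item In the case $j_0=1$, taking $I_{k,1}=\{0\}$ fails if $I_{k,1}$ has mismatches in $(0,p_k-l_k)$, since the resulting gap would be $1$. You must delete the whole stretch $[p_k-l_k,p_k)$ from $I_{k,1}$, re-adding $\{0\}$ if $p_k=l_k$. This is exactly the paper's buffer deletion on the left side.
\end{enumerate}
Your preliminary shrinking to the first and last mismatch is harmless but unnecessary.
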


\begin{proof}
Let $\varepsilon > 0$ and choose $N'$, $r_k$, $s_k$, $\cI'_k = \bigcup_{j=1}^{N'} I'_{k,j}$, $p'_{k}$, $q'_{k}$ and $l'_k$ satisfying the conditions (\ref{i:241})--(\ref{eq:expanding gap}) of Definitions~\ref{def:echoing} and~\ref{def:strongechoing}.
Obtaning $I_{k,j}$ with little modification to $I'_{k,j}$, we show that for every $j \leq N$, either $I_{k,j} \cap  [p'_{k}, q'_{k}] = \emptyset$ or $I_{k,j} \subset [p'_{k}, q'_{k}]$.

Let $\cI_{k} = \cI'_{k} \, \setminus \, ([p'_{k}-l'_{k}, p'_{k}) \cup (q'_{k},q'_{k}+l'_{k}])$.
By the condition~(\ref{eq:expanding gap}) of $\beta$-echoing, $\{0 \le i < s_{k} \,:\, w_i \ne w_{i+r_{k}}\} \subset \cI_{k}$ still holds.

Thus, by taking subsequence of $k \in \NN$, w.l.o.g, we can assume that there exists a number of intervals $N' \leq N \leq N'+2$ such that for all $k \in \NN$ and $1 \leq j \leq N$, $I_{k,j} \cap  [p'_{k}, q'_{k}] = \emptyset$ or $I_{k,j} \subset [p'_{k}, q'_{k}]$ and $0 \in I_{k,1}$ while satisfying Definitions~\ref{def:echoing} and~\ref{def:strongechoing}.

Finally, we claim that for every $k$, we have $\sum_{i \in I_{k,j'_{0}}} (w_{r_{k}+i} {-} w_{i})\beta^{-i} \neq 0$ and $I_{k,j'_0} \subset [p'_{k}, q'_{k}]$ for some $2 \leq j'_0 \leq N$.

Otherwise, for any fixed $k$, let $2 \leq j'_{1} \leq N$ be the largest index $j$ such that $\min I_{k,j} \leq s_k/2$.
Assume that the claim is not true, i.e., 
\[
    \sum_{i \in I_{k,j}} (w_{r_{k}+i} - w_{i})\beta^{-i} = 0 \mbox{ for all } j \mbox{ such that } I_{k,j} \subset [p'_{k}, q'_{k}].
\]
We have 
\[
    \sum_{p'_{k} \leq i \leq q'_{k}} (w_{r_{k}+i} - w_{i})\beta^{-i} = 0
\] for all $k \in \NN$.
Then, by the conditions~\eqref{eq:upper bound of p,q} and \eqref{eq:expanding gap} of $\beta$-echoing and Lemma~\ref{lem:Common root}, this contradicts the condition~(\ref{eq:non vanishing}) of $\beta$-echoing: $\sum_{p'_{k}\leq i \leq q'_{k}} (w_{r_{k}+i} - w_{i})\beta^{-i} \neq 0$.

Again by taking subsequence of $k \in \NN$, W.l.o.g., we can assume that $j_{0}$ is a constant for all $k \in \NN$. 
Then, we can choose as following: $p_{k} = \min I_{k,j_{0}}$, $q_{k} = \max I_{k,j_{0}}$ and $l_{k} = \min(l'_{k}, d(I_{k,j_{0}-1}, I_{k,j_{0}}), d(I_{k,j_{0}}, I_{k,j_{0}+1}))$.
\end{proof}

Since the definition of strong echoing word has been changed and we allow algebraic digits, we give a modified version of the proof of Kebis et al.\ \cite{Kebis26TranscendencePisot}.

\begin{proof}[Proof of Proposition~\ref{prop:Strong echoing implies transcendence}]
    We assume that $(w)_{\beta}$ is algebraic, therefore lies in $\QQ(\beta,\cA)$ by Proposition~\ref{prop:Echoing word non algebraic}, then derive a contradiction.
    We can also assume w.l.o.g.\ that $(w)_{\beta} \ne 0$ because, by \eqref{eq:non vanishing}, we have $(T^{n}(w))_\beta \ne 0$ for some $n \in [p_0,q_0]$, and finite shifts preserve the $\beta$-echoing property and algebraicity.
    At the end of the proof of Proposition~\ref{prop:Echoing word non algebraic}, we have shown that $a_{3} \neq 0$.
    Our goal is to kill the $x_3$ term in the linear form $L$ and to use Proposition~\ref{prop:p-adic Subspace Theorem} again.
    Then Lemmas~\ref{lem:Common root} and~\ref{lem:Expanding Gap} lead us to the contradiction. 
    
    Let $S$, $\nu_{0}$ and $\kappa$ be as in the proof of Proposition~\ref{prop:Echoing word non algebraic}, let~$c_3= \frac{\log(H(\beta))}{\log(|\beta|_{\nu_{0}})} \geq~1$ and $\varepsilon < \min\Big(\frac{\log(|\beta|_{\nu_0})}{4|S| \log(\kappa)},\frac{1}{16 \, c_{3}}\Big) \leq \frac{1}{16}$.
    Let the constants $j_0,N$ and the sequences $r_{k}$, $s_{k}$, $\cI_{k}$, $I_{k,j}$ be as in Lemma~\ref{lem:beta-Expanding Gap}, and let $J$ and $t_{k}=\max I_{k,1}$ be as in the proof of Proposition~\ref{prop:Echoing word non algebraic}.
    From the the proof of Proposition~\ref{prop:Echoing word non algebraic}, we have the non-zero linear form
    \[
        F(\bx) = a_{1}x_{1}+ a_{2}x_{2}+ a_{3}x_{3},
    \]
for which all $\bz_{k} \in \QQ(\beta,\cA)^{|J|+3}$ are solutions of $F(\bz_{k}) = 0$.
    We consider the linear form 
    \[
     L'(\bx) := F(\bx) + a_{3} L(\bx) = (a_{2} - a_{3}(w)_{\beta})x_{2} - \sum_{j \in J} a_{3} x_{j+2}
    \]
    (where no $x_1$ and $x_{3}$ terms occur).

    By Lemma~\ref{lem:beta-Expanding Gap}, there exists $j_0 \in J$ such that $z_{k,j_{0}+2} = \sum_{i \in I_{k,j_{0}}} (w_{r_{k}+i} - w_{i})\beta^{-i} \neq 0$ and $M_k := \max I_{k,j_0} <(\varepsilon+1/2)\, s_k < (9/16)\, s_k$ for sufficiently large~$k$.

    Recall that $2\varepsilon c_{3} < 1/8$.
    For sufficiently large $k \in \NN$, we have $\varepsilon s_{k} < |\beta|_{\nu_{0}}^{\varepsilon s_{k}}$.
    We regard $z_{k,j_{0}+2} \, \beta^{M_{k}}$ as a non-zero polynomial $f(x) \in \QQ(\beta,\cA)[x]$ of degree $|I_{k,j_{0}}| < \varepsilon s_{k}$ evaluated at~$\beta$.
    For the height of $f$, we have $H(f) \leq \kappa$.
    Applying Lemma~\ref{lem:generalized liouville inequality} to the polynomial $f(\beta)$, we have
    \begin{equation}\label{eq:Liouville inequality for non vanishing part}
        |z_{k,j_{0}+2}|^{-1}_{\nu_{0}} \ll (\varepsilon s_{k}) \, \kappa \, H(\beta)^{\varepsilon s_{k}} \, |\beta|^{9s_{k}/16 }_{\nu_{0}} \ll |\beta|_{\nu_{0}}^{(9/16 + 2 \varepsilon c_{3}) \, s_{k}} < |\beta|_{\nu_{0}}^{ 11s_{k}/16 }.
    \end{equation}
       
    We have 
    \begin{equation}\label{eq:Archimedean upper bound of modified linear form}
        |L'(\bz_{k})|_{\nu_{0}} = |a_{3} L(\bz_{k})|_{\nu_{0}} \ll |\beta|_{\nu_{0}}^{-s_{k}}
    \end{equation}
    by the inequality \eqref{eq:Archimedean upper bound}.
    Combining  inequalities \eqref{eq:other absolute value}, \eqref{eq:Liouville inequality for non vanishing part}, \eqref{eq:Archimedean upper bound of modified linear form}, and the trivial equation~$\prod_{\nu \in S} |z_{k,2}|_{\nu} =~1$, we obtain that
    \begin{equation}\label{eq:asymtotic subspace theorem modified}
        |L'(\bz_{k})|_{\nu_{0}} \prod_{\nu \in S}|z_{k,2}|_{\nu} \prod_{\substack{(\nu,j) \in S\times J \\ (\nu,j) \neq (\nu_{0} , j_{0})}}|z_{k,j+2}|_{\nu}
        \ll |\beta|_{\nu_0}^{s_{k}/4} |\beta|_{\nu_{0}}^{11 s_{k}/16} |\beta|_{\nu_{0}}^{-s_{k}}.
    \end{equation}
        Therefore, we have
    \begin{equation}\label{eq:subspace theorem modified}
        |L'(\bz_{k})|_{\nu_{0}} \prod_{\nu \in S}|z_{k,2}|_{\nu} \prod_{\substack{(\nu,j) \in S\times J \\ (\nu,j) \neq (\nu_{0} , j_{0})}}|z_{k,j+2}|_{\nu} 
        <|\beta|_{\nu_{0}}^{-s_{k}/17}
    \end{equation}
    for sufficiently large $k$.
    Recall that the height of $\bz_{k}$ satisfies \eqref{eq:height of a_n}.
    Thus the left-hand side of inequality \eqref{eq:subspace theorem modified} is at most $H(\bz_{k})^{-1/17c_{3}}$ for sufficiently large $k$.

    By Proposition~\ref{prop:p-adic Subspace Theorem}, we know that the set of solutions $\bz_{k}$ to the inequality \eqref{eq:subspace theorem modified} lies in finitely many proper subspaces of $\QQ(\beta,\cA)^{|J|+1}$.
    Let $G$ be a linear form defining one of these proper subspaces which contains infinitely many $\bz_{k}$, and write
    \[
        G(\bx) = b_{2}x_{2}+ \sum_{j \in J} b_{j+2}x_{j+2}.
    \]
    In the same manner as in the proof of Proposition~\ref{prop:Echoing word non algebraic} above, by Lemma~\ref{lem:Common root} applied to the polynomial equation $G(\bz_{k}) = 0$ on~$\beta$, we obtain $b_{j+2}=0$ for all $j \in J$, thus $b_2 = 0$, which contradicts that $G$ forms a proper subspace.
\end{proof}    

\section{Echoing of $\cS$-adic words}\label{sec:echoing of S-adic words}

In this section, we prove the first part of Theorem~\ref{thm:S-adic echoing}.
We start by proving the following lemma; important parts of the proof are illustrated in Figure~\ref{f:R}.
For a word $w = w_0w_1\cdots \in X_{\bsigma}$, where $\bsigma$ is a sequence of substitutions over $\cA$ that admits a frequency vector~$\bff$ as in \eqref{e:u}, we set
\begin{equation} \label{e:Jr}
\cJ_r(w) := \{i \ge 0 \,:\, w_i \ne w_{i+r}\ \mbox{or}\ \bl(w_{[i,i+r)}) \ne \round{r\bff}\}
\end{equation}
for $r \ge 1$, where $\round{\bx}$ denotes the nearest integer vector to a vector~$\bx$.

\begin{lemma} \label{lem:density0}
Let $\bsigma$ be a primitive sequence of unimodular substitutions over~$\cA$ that admits a frequency vector $\bff = (f_a)_{a\in\cA}$, and assume that $\cC_{\bsigma}$ forms a tiling of $\bone^\perp$ with compact sets.
Then for every $\varepsilon > 0$, there exist $r,s \ge 1$ such that the following properties hold for all $w = w_0w_1\cdots \in X_{\bsigma}$, $a \in \cA$.
\begin{enumerate}[\upshape (i)]
\itemsep1ex
\item \label{i:d01} 
$|\{i \in \cJ_r(w) \cap [0,s) : w_i = a\}| \le \varepsilon\, f_a\, s$,
\item \label{i:d02}
$r \le \varepsilon\, f_a \, s$,
\item \label{i:d03}
$|w_{[1,s/2)}|_a \ge f_a\, s/3$, $f_a\, s/9 \le |w_{[1,s/8)}|_a \le f_a\, s/7$, 
\item \label{i:d04}
for each $u \in \cL_{\bsigma} \cap \cA^{r+1}$ and $v \in R_{X_{\bsigma}}(u)$, $vu$ is a factor of~$w_{[1,s/8)}$.
\end{enumerate}
\end{lemma}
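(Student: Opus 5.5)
The plan is to read the set $\cJ_r(w)$ through the Rauzy fractal, where it becomes a small neighbourhood of the tile boundaries, and then to count visits to that neighbourhood uniformly in $w$ using unique ergodicity. Items (\ref{i:d02})--(\ref{i:d04}) will then follow by taking $s$ large once $r$ is fixed.

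First I reduce to limit words. Whether $i \in \cJ_r(w)$, and whether $w_i = a$, depends only on the factor $w_{[i,i+r]}$. Every factor of length $s+r$ of every $w \in X_{\bsigma}$ lies in $\cL_{\bsigma}$, so it occurs in some limit word $u \in W_{\bsigma}$. It therefore suffices to bound, uniformly over positions $m$ of a fixed $u \in W_{\bsigma}$ (a finite set), the number of $i \in [m,m+s)$ with $u_i = a$ and $i \in \cJ_r(u)$.

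For $u \in W_{\bsigma}$, set $\bx_i := \sum_{j<i}(\be_{u_j}-\bff)$, so that $\bx_i \in \cR_{\bsigma}(u_i)$ and $\bx_{i+r} - \bx_i = \bl(u_{[i,i+r)}) - r\bff$. By Dirichlet's simultaneous approximation, I choose $r$ so that $\boldsymbol\delta := \round{r\bff} - r\bff$ (which lies in $\bone^\perp$ once rounding is done within $\bone^\perp\cap\ZZ^\cA$) has norm smaller than a prescribed $\eta>0$.

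I claim that if $\bx_i$ lies at distance more than $\eta + \operatorname{diam}$-error from the union $B$ of boundaries of all tiles in $\cC_{\bsigma}$, then $i \notin \cJ_r(u)$. The tiling property says $\bx_{i+r} = \bx_i + \boldsymbol\delta + \bz$ with $\bz := \bl(u_{[i,i+r)}) - \round{r\bff} \in \ZZ^\cA\cap\bone^\perp$. The point $\bx_{i+r}$ lies in $\cR_{\bsigma}(u_{i+r})$. The point $\bx_i+\boldsymbol\delta$ lies in the interior of the same tile $\cR_{\bsigma}(u_i)$ as $\bx_i$, and its translate by $\bz$ lies in the interior of $\bz + \cR_{\bsigma}(u_i)$. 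Since distinct tiles have disjoint interiors, this forces $\bz = 0$ and $u_{i+r} = u_i$. (One also uses that the tiles are the closures of their interiors, which follows from the tiling with compact sets.)

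Because the boundaries have Lebesgue measure zero and the tiles are compact, the closed $2\eta$-neighbourhood $B_\eta$ of $B$, intersected with $\cR_{\bsigma}(a)$, has measure at most $\tfrac{\varepsilon}{2}\, f_a\, \lambda(\text{fundamental domain})$ for $\eta$ small enough. Here I use that $\lambda(\cR_{\bsigma}(a))$ is proportional to $f_a$, by the conjugacy to a toral translation.

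The counting step is where I expect the main obstacle. I need that the proportion of $i \in [m,m+s)$ with $\bx_i \in B_\eta\cap\cR_{\bsigma}(a)$ is at most $\varepsilon f_a$, uniformly in $m$. The plan is to use that $i\mapsto \bx_i \bmod (\ZZ^\cA\cap\bone^\perp)$ codes the minimal, uniquely ergodic translation by $-\bff$ on the torus $\bone^\perp/(\ZZ^\cA\cap\bone^\perp)$. The unique ergodicity comes from pure discrete spectrum given by the tiling. Uniform ergodic averages of the indicator of a closed set then satisfy a uniform $\limsup$ bound by its measure. To make this rigorous I would replace the indicator of $B_\eta$ by a continuous function that dominates it and is supported in $B_{2\eta}$, and apply uniform convergence of Birkhoff sums for uniquely ergodic systems. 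This yields (\ref{i:d01}) for all $s \ge s_0$.

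The remaining items are straightforward once $r$ is fixed:
\begin{itemize}
\item (\ref{i:d02}) holds for $s \ge r/(\varepsilon \min_a f_a)$.
\item (\ref{i:d03}) follows from uniform letter frequencies (unique ergodicity again), taking $s$ large so that $|w_{[1,s/2)}|_a$ and $|w_{[1,s/8)}|_a$ are within a small relative error of $f_a s/2$ and $f_a s/8$.
\item (\ref{i:d04}) uses that $\cL_{\bsigma}\cap\cA^{r+1}$ is finite and each $u$ in it has finitely many return words, by primitivity (uniform recurrence). So every word $vu$ occurs in every factor of some bounded length $K_r$, and I take $s > 8(K_r+1)$.
\end{itemize}
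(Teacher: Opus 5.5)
Your proof is correct and uses the same core mechanism as the paper. You choose $r$ by simultaneous approximation so that $\norm{r\bff-\round{r\bff}}$ is small. You then use the tiling to show that every $i\in\cJ_r$ corresponds to a fractal point near a tile boundary. Finally you count visits to a small boundary neighbourhood along an orbit of the toral translation.

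The implementation differs in two places.
\begin{enumerate}
\item The paper imports a representation map $\phi\colon X_{\bsigma}\to\bone^\perp$ with $\phi(w)\in\cR(w_0)$, together with the measure-theoretic conjugacy, from the literature. You instead work with the points $\bx_i=\sum_{j<i}(\be_{u_j}-\bff)$ of limit words $u\in W_{\bsigma}$. These lie in $\cR(u_i)$ by the very definition of the Rauzy fractal, and you reach an arbitrary $w$ by minimality, since every factor of $w$ occurs in $u$.
\item For uniformity in $w$, the paper applies the pointwise ergodic theorem at almost every point. It covers the torus by finitely many $\gamma$-balls around generic points and uses that the translation is an isometry, which is why a $2\gamma$-neighbourhood appears. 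You use uniform convergence of Birkhoff averages for a uniquely ergodic translation, applied to a continuous majorant of the indicator.
\end{enumerate}

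Your route is more self-contained. It only needs two facts, both of which follow from the tiling with compact sets.
\begin{itemize}
\item $\lambda(\partial\cR(a))=0$: $\partial\cR(a)$ lies in $\cR(a)$ intersected with the closed, locally finite union of the other tiles, and these pairwise intersections have measure zero.
\item The coordinates of $\bff$ are rationally independent. This is the direct reason why the translation by $\be_b-\bff$ on $\bone^\perp/(\ZZ^\cA\cap\bone^\perp)$ is uniquely ergodic. The translation vector must be $\be_b-\bff$; $-\bff$ is not in $\bone^\perp$.
\end{itemize}
Item (iii) is most directly obtained from compactness: $\bl(u_{[m,m+n)})-n\bff=\bx_{m+n}-\bx_m$ is bounded, which gives uniform frequencies with bounded discrepancy. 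The paper instead reads (iii) off the same torus counts using $\lambda(\cR(a))=f_a$. You never need that identity; in your measure estimate, $f_a>0$ is just a fixed constant. On the other hand, the paper's version phrases everything directly for arbitrary $w$ and only needs ergodicity rather than uniform convergence, though for rotations these are equivalent.

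One step can be tightened. You invoke that the tiles are closures of their interiors and assert that this follows from the tiling with compact sets, but you do not prove it; the paper's version of this step relies on the same fact implicitly. It is not needed. Your set $B$ is invariant under $\ZZ^\cA\cap\bone^\perp$ and $\norm{\boldsymbol{\delta}}<\eta$, so the point $\bx_{i+r}=\bx_i+\boldsymbol{\delta}+\bz$ is at positive distance from $B$. Hence $\bx_{i+r}$ lies in $\mathrm{int}\,\cR(u_{i+r})$ as well as in $\mathrm{int}(\bz+\cR(u_i))$. Two tiles whose interiors meet intersect in positive measure, so the tiling gives $\bz=0$ and $u_{i+r}=u_i$.

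A minor point: $\boldsymbol{\delta}\in\bone^\perp$ holds automatically once $\norm{\boldsymbol{\delta}}_1<1$, since $\round{r\bff}$ then has coordinate sum $r$. No special rounding inside $\bone^\perp\cap\ZZ^\cA$ is involved.
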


\begin{figure}[htbp]
    \centering
    \input{S-adic_rauzyfractal.tikz}
    \caption{Rauzy fractals $\mathcal{R}(i)$, $i \in \{\mathtt{a}, \mathtt{b}, \mathtt{c}\}$, for the Cassaigne--Selmer shift with $\bsigma = (\gamma_1, \gamma_2)^{\infty}$ providing a domain exchange with translation vectors $\mathbf{f}{-}\mathbf{e}_i$, $\bff \approx (0.43016,0.32472,0.24512)$, their representations on the torus $\mathbf{1}^\perp / (\mathbb{Z}^3 \cap \mathbf{1}^\perp)$, and the length $s = 94$ prefix of the fixed point~$w$ of $\gamma_1 \circ \gamma_2$. \\
      For $r=37$, the action of the shift by~$T^r$ is illustrated in the three settings.
      The set $\cJ_r(w) \cap [0,94)$ is represented by indices~$i$ with red background, with a mismatch $w_i \ne w_{i+r}$ for all $i$ except $i = 91$; in the latter case, we have $\bl(w_{[i,i+r)}) = (16,11,10) \ne (16,12,9) = \round{r\bff}$.
      When $w_i \ne w_{i+r}$, the points $\phi \circ T^i(w)$ and $\phi \circ T^{i+r}(w)$ are not in the same subtile.
      The condition $\bl(w_{[i,i+r)}) \ne \round{r\bff}$ is equivalent to $\phi \circ T^{i+r}(w) \ne \phi \circ T^i(w){+}r\bff{-}\round{r\bff}$ but we may have $w_i = w_{i+r}$. \\
      Balls of radius $\gamma = 0.17$ (in the 1-norm of $\RR^{3}$), which contain the translation $r\bff {-} \round{r\bff}$ and are used in the proof of Lemma~\ref{lem:density0}, are also represented (in white).} \label{f:R}
\end{figure}

\begin{proof}
For the sake of readability, we omit the index $\bsigma$ from notation such as~$\cC_{\bsigma}$.

First note that (\ref{i:d04}) holds for all sufficiently large $s$ because the primitivity of $\bsigma$ implies that $X$ is uniformly recurrent, thus the set $\bigcup_{v\in\cL\cap\cA^{r+1}} R_X(v)$ is finite and occurs in each sufficiently long element of~$\cL$.

The tiling property of~$\cC$ with compact sets implies that the vector~$\bff$ has rationally independent coordinates (because otherwise $\mathcal{R}$ would be in a finite union of proper affine subspaces of~$\bone^\perp$).
Moreover, we have $\lambda(\partial\cR(a)) = 0$ for all $a \in \cA$, where $\lambda$ is the Lebesgue measure on~$\bone^\perp$. 
By \cite[Lemma~8.3]{Berthe19Geometrydynamics}, where recurrence and algebraic irreducibility can be replaced by the rational independence of the frequency vector by \cite[Lemma~5.11]{Berthe23Multidimensionalcontinued} and balancedness is equivalent to the compactness of the Rauzy fractal by \cite[Lemma~4.1]{Berthe19Geometrydynamics}, there exists a map $\phi: X \to \bone^\perp$ satisfying, for all $w_0w_1\cdots \in X$,
\[
\phi(w_0w_1\cdots) \in \cR(w_0)\ \mbox{and}\ \phi \circ T(w_0w_1\cdots) = \phi(w_1w_2\cdots) = \phi(w_0w_1\cdots) {+} \bff {-} \be_{w_0}.
\]

For $\Lambda := \ZZ^{\cA}\cap\bone^\perp$, let $\pi: \bone^\perp \to \bone^\perp/\Lambda$ be the canonical projection $\bx \to \bx + \Lambda$.
Define the translation
\[
\tilde{T}:\ \bone^\perp/\Lambda \to \bone^\perp/\Lambda, \quad \bx + \Lambda \mapsto \bx + \bff - \be_a + \Lambda,
\]
for some $a \in \cA$.
(The definition does not depend on~$a$ because $\be_a-\be_b \in \Lambda$ for all $a,b \in \cA$.)
Then we have the commutative diagram
\[
\begin{tikzcd}
X \arrow [swap,d,"\pi\circ\phi"] \arrow [r,"T"] & X \arrow[d,"\pi\circ\phi"] \\
\bone^\perp/\Lambda \arrow[r,"\tilde{T}"] & \bone^\perp/\Lambda
\end{tikzcd}
\]
By the proof of \cite[Proposition~8.4]{Berthe19Geometrydynamics}, we have the measure theoretic conjugacy 
\[
(X,T,\mu) \stackrel{\pi\circ\phi}{\to} (\bone^\perp/\Lambda,\tilde{T},\tilde{\lambda}),
\] 
where $\mu$ is the unique invariant probability measure of $(X,T)$ (see \cite[Theorem~5.7]{Berthe14substitutivedynamical}) and $\tilde{\lambda}$ the Haar measure on $\bone^\perp/\Lambda$.
The tiling property of~$\cC$ implies
\[
\tilde{\lambda}(\pi(\cR(a))) = \lambda(\cR(a)) = f_a.
\]

For $\gamma \ge 0$, $a \in \cA$, let
\[
\partial_{\gamma}\cR(a) := \{\bx \in \bone^\perp : \norm{\bx-\by}_1 < \gamma\ \mbox{for some}\ \by\in\partial\cR(a)\}.
\]
Since $\lambda(\partial\cR(a)) = 0$, we have 
\[
\lim_{\gamma\to0} \lambda(\partial_\gamma\cR(a)) = 0
\]
for all $a \in \cA$.
Therefore, we can choose $\gamma > 0$ such that
\[
\lambda(\partial_{2\gamma}\cR(a)) \le \varepsilon\, f_a/3
\]
for all $a \in \cA$.
By Kronecker's theorem, there exists an integer $r \ge 1$ such that
\begin{equation} \label{e:ruz}
\norm{r\bff-\round{r\bff}}_1 < \gamma.
\end{equation}
Note that $\norm{r\bff}_1 = r$ and $\norm{\round{r\bff}}_1 \in \mathbb{Z}$ implies that $\norm{\round{r\bff}}_1 = r$ (for $\gamma \le 1$).
For all $w = w_0w_1\cdots \in X$, 
\[
i \in \cJ_r(w) \quad \mbox{implies that} \quad \phi \circ T^i(w) \in \partial_\gamma \cR(w_i).
\]
Indeed, we have $\phi \circ T^i(w) \in \cR(w_i)$ and $\phi \circ T^{i+r}(w) = \phi \circ T^i(w) + r\bff - \bl(w_{[i,i+r)})$ by the definition of~$\phi$.
If $\phi \circ T^i(w) \notin \partial_\gamma \cR(w_i)$, then $\phi \circ T^i(w) + r\bff - \round{r\bff}$ is in the interior of $\cR(w_i)$.
Since $\cC$ forms a tiling of $\bone^\perp$, this implies that $\bl(w_{[i,i+r)}) = \round{r\bff}$ and $w_{i+r} = w_i$, i.e., $i \notin \cJ_r(w)$.

Moreover, we have $\tilde{T}^r(\bx) \in B_\gamma(\bx)$ for all $\bx \in \bone^\perp/\Lambda$, where $B_\gamma(\bx)$ is the open ball in $\bone^\perp/\Lambda$ with radius~$\gamma$ and center~$\bx$ associated with the distance induced by the norm~$\norm{\cdot}_1$ on~$\bone^\perp$.
By the ergodic theorem, we have, for almost all $\bx \in \bone^\perp/\Lambda$, 
\begin{equation}\label{eq:time average}
\begin{aligned}
\lim_{s\to\infty} \frac{1}{s}\, |\{0\le i < s \,:\, \tilde{T}^i(\bx) \in \pi(\cR(a))\}| & = \tilde{\lambda}(\pi(\cR(a))) = f_a, \\
\lim_{s\to\infty} \frac{1}{s}\, |\{0\le i < s \,:\, \tilde{T}^i(\bx) \in \pi(\partial_{2\gamma}\cR(a))\}| & = \tilde{\lambda}(\pi(\partial_{2\gamma}\cR(a))) \le \lambda(\partial_{2\gamma}\cR(a)).
\end{aligned}
\end{equation}
Since $\bone^\perp/\Lambda$ is compact, it has a cover by finitely many open balls $B_\gamma(\bx)$ with $\bx$ satisfying \eqref{eq:time average}.
Let $\varepsilon'< \min(\varepsilon/2, 1/72)$.
Choose $s$ large enough such that (\ref{i:d02}) and (\ref{i:d04}) hold and
\[
\begin{aligned}
|\{0\le i < s/2 \,:\, \tilde{T}^i(\bx) \in \pi(\cR(a))\}| & \ge (1/2-\varepsilon') f_a\, s, \\
(1/8-\varepsilon') f_a\, s \leq |\{0\le i < s/8 \,:\, \tilde{T}^i(\bx) \in \pi(\cR(a))\}| &\le (1/8+\varepsilon') f_a\, s, \\
|\{0\le i < s \,:\, \tilde{T}^i(\bx) \in \pi(\partial_{2\gamma}\cR(a))\}| & \le \varepsilon'\, f_a\, s,
\end{aligned}
\]
for all $\bx$ in this cover and $a \in \cA$.

Let $\pi \circ \phi(w) \in B_\gamma(\bx)$ with $\bx$ from the finite cover.
For $a \in \cA$, we have
\[
\begin{gathered}
|\{i \in \cJ_r(w) \cap [0,s) : w_i = a\}| \le |\{0 \le i < s \,:\, \tilde{T}^i(\bx) \in \pi(\partial_{2\gamma}\cR(a))\}| \le \varepsilon\, f_a\, s/2, \\
\begin{aligned}
|w_{[1,s/8)}|_a  & \le |\{1 \le i < s/8 \,:\, \phi \circ T^i(w) \in \cR(a) \}| \\
& \le |\{1 \le i < s/8 \,:\, \tilde{T}^i(\bx) \in \pi(\cR(a) \cup \partial_{2\gamma}\cR(a)) \}| \le f_a\, s/7, \\
|w_{[1,s/8)}|_a & \ge |\{1 \le i < s/8 \,:\, \phi \circ T^i(w) \in \cR(a) \setminus \partial\cR(a) \}| \\
& \ge |\{1 \le i < s/8 \,:\, \tilde{T}^i(\bx) \in \pi(\cR(a) \setminus \partial_{2\gamma}\cR(a))\}| \ge f_a\, s/9,  \\
\end{aligned}
\end{gathered}
\]
and $|w_{[1,s/2)}|_a \ge f_a\, s/3$, which proves~(\ref{i:d01}) and~(\ref{i:d03}). 
\end{proof}

Recall that the Definition~\ref{def:echoing} of echoing requires a finite number of intervals in condition~\eqref{i:243}.
The recurrence of the directive sequence guarantees that the length $r+s$ prefixe of $w$ at level $0$ appears at infinitely many levels; applying the substitutions to this fixed prefix ensures the condition~\eqref{i:243}.

\begin{lemma}\label{lem:Echoing}
Let $\bsigma$ be a primitive and recurrent sequence of unimodular substitutions over~$\cA$, and assume that $\cC_{\bsigma}$ forms a tiling of $\bone^\perp$ with compact sets.
Then each $w \in X_{\bsigma}$ is echoing.
\end{lemma}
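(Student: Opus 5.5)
Fix $\varepsilon>0$ and $w\in X_{\bsigma}$. The plan is to push the single-scale statement of Lemma~\ref{lem:density0} up through the levels of the directive sequence. Recurrence is what keeps the number of mismatch intervals bounded.

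\textbf{Setup at a fixed level.} Apply Lemma~\ref{lem:density0} to $\bsigma$ with a smaller parameter $\varepsilon'$, to be fixed later in terms of $\varepsilon$ and the frequencies. This gives $r,s$ such that, for every $x\in X_{\bsigma}$, the set $\cJ_r(x)\cap[0,s)$ is small and the other properties (\ref{i:d02})--(\ref{i:d04}) hold. Recurrence gives infinitely many $m$ with $(\sigma_m,\dots,\sigma_{m+n-1})=(\sigma_0,\dots,\sigma_{n-1})$ for $n$ as large as we like. Primitivity and the tiling hypothesis, which transfers to shifted sequences along a suitable subsequence of recurrence times, then let the same $r,s$ work at level $m$. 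We write $w=T^{c}\sigma_{[0,m)}(x)$ with $x\in X_{(\sigma_{m+n})_n}$ and $0\le c<|\sigma_{[0,m)}(x_0)|$. I would simplify this by noting that, for $n$ large, the prefix $x_{[0,r+s)}$ lies in $\cL_{\bsigma}$; one could instead work with the finitely many words of length $r+s$.

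\textbf{Desubstitution.} Set $r_k:=|\sigma_{[0,m)}(x_{[0,r)})|$, which equals $\langle M_{\sigma_{[0,m)}}\round{r\bff},\bone\rangle$ when $0\notin\cJ_r(x)$. Choose $s_k$ of order $|\sigma_{[0,m)}(x_{[0,s)})|$, minus the offset $c$. If $i\notin\cJ_r(x)$, then $x_i=x_{i+r}$ and $\bl(x_{[i,i+r)})=\round{r\bff}$. Hence $\sigma_{[0,m)}(x_{[0,i)})$ and $\sigma_{[0,m)}(x_{[0,i+r)})$ differ in position by exactly $r_k$, and the blocks $\sigma_{[0,m)}(x_i)$ and $\sigma_{[0,m)}(x_{i+r})$ coincide. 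So the positions $j<s_k$ with $w_j\ne w_{j+r_k}$ lie in the union of the blocks $\sigma_{[0,m)}(x_i)$, $i\in\cJ_r(x)\cap[0,s)$, shifted by $c$. Consecutive indices of $\cJ_r(x)$ give contiguous blocks, so $\cI_k$ is a union of at most $s$ intervals. Since $s$ does not depend on $k$, we may take $N:=s+1$, the extra interval absorbing a boundary effect from the offset $c$. This gives (\ref{i:241}) and (\ref{i:243}).

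\textbf{Density estimate.} For (\ref{i:242}), $|\cI_k|$ is bounded by $\sum_a|\{i\in\cJ_r(x)\cap[0,s):x_i=a\}|\,|\sigma_{[0,m)}(a)|\le\varepsilon'\sum_a f_a s|\sigma_{[0,m)}(a)|$ by (\ref{i:d01}). Similarly $r_k\le\sum_a r\,(f_a+\gamma)|\sigma_{[0,m)}(a)|$, which is controlled by (\ref{i:d02}). By (\ref{i:d03}), $s_k\ge\sum_a |x_{[1,s/2)}|_a|\sigma_{[0,m)}(a)|\ge\sum_a f_a s|\sigma_{[0,m)}(a)|/3$. The offset $c$ is at most one block, and the lower bound uses positions $\ge1$, which is exactly why (\ref{i:d03}) starts at index $1$. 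It follows that $r_k+|\cI_k|\le 6\varepsilon'\, s_k<\varepsilon s_k$. Letting $m\to\infty$ along the recurrence times, $r_k$ and $s_k$ are unbounded by primitivity.

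\textbf{Main obstacle.} The delicate step is the transfer of Lemma~\ref{lem:density0} to the shifted sequence $(\sigma_{m+n})_n$, with uniform $r,s$ and with $w$ realized as $T^c\sigma_{[0,m)}(x)$ for such an $x$. I would handle it by applying the lemma once, at level $0$, to all elements of $X_{\bsigma}$. Then I would use recurrence to show that $x$ may be replaced by a word agreeing with an element of $X_{\bsigma}$ on $[0,r+s]$, namely one whose prefix of length $r+s+1$ is a factor of $\sigma_{[m,m+n)}(b)=\sigma_{[0,n)}(b)$. Properties (\ref{i:d01})--(\ref{i:d03}) depend only on this prefix, so they carry over.
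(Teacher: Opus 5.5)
Your plan follows the paper's proof closely. You fix $r,s$ from Lemma~\ref{lem:density0} at level $0$, desubstitute $w=T^{c}\sigma_{[0,m)}(x)$ at recurrence times, let $\cI_k$ be the images of the blocks indexed by $\cJ_r(x)\cap[0,s)$, and compare everything with $s\sum_a f_a|\sigma_{[0,m)}(a)|$. However, the step you single out as the main obstacle is not actually resolved.

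You assert that $x_{[0,r+s]}$ is a factor of a single block $\sigma_{[m,m+n)}(b)=\sigma_{[0,n)}(b)$. Write $x=T^{c'}\sigma_{[m,m+n)}(y)$ with $y\in X_{T^{m+n}\bsigma}$. Then you only know that $x_{[0,r+s]}$ is a factor of $\sigma_{[0,n)}(y_0y_1)$. It straddles the cut between the two blocks whenever $c'$ is close to $|\sigma_{[0,n)}(y_0)|$, and you have no control over $c'$. For instance, if $w$ is the suffix of a two-sided limit word starting a fixed number of letters before the central cut, this happens at every level.

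In that case $y_0y_1\in\cL_{T^{m+n}\bsigma}$. Recurrence only matches the finite block $(\sigma_0,\dots,\sigma_{n-1})$, whereas whether $y_0y_1\in\cL_{T^{n}\bsigma}$ depends on the tail of $\bsigma$. So $x_{[0,r+s]}$ need not belong to $\cL_{\bsigma}$. Then neither (\ref{i:d01}) (needed for the upper bound on $|\cI_k|$) nor (\ref{i:d03}) (needed for the lower bound on $s_k$) can be applied to it. Your remark that ``for $n$ large, the prefix $x_{[0,r+s)}$ lies in $\cL_{\bsigma}$'' has the same problem.

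The missing ingredient is the paper's two-case argument:
\begin{enumerate}
\item Take $n$ with $|\sigma_{[0,n)}(b)|\ge r+s$ for all $b$, so that $x_{[0,r+s)}$ meets at most two blocks.
\item If it meets two, split it at the cut $s'$. Then $x_{[0,s')}$ is a suffix of $\sigma_{[0,n)}(y_0)$ and $x_{[s',s+r)}$ is a prefix of $\sigma_{[0,n)}(y_1)$. Each lies in $\cL_{\bsigma}$, because $\sigma_{[0,n)}(b)\in\cL_{\bsigma}$ for every letter $b$ by primitivity. Hence each is a prefix of some $w'$, resp.\ $\tilde w$, in $X_{\bsigma}$.
\item Apply (\ref{i:d01}) to $w'$ and $\tilde w$ separately.
\item Bound the at most $r$ indices $i<s'$ whose window $[i,i+r]$ crosses the cut using (\ref{i:d02}).
\item Get the lower bound on $s_k$ from (\ref{i:d03}) applied to whichever piece has length at least $s/2$.
\end{enumerate}
This costs only a constant factor in $\varepsilon$.

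Two smaller corrections:
\begin{itemize}
\item Define $r_k:=\norm{M_{\sigma_{[0,m)}}\round{r\bff}}_1$ directly. When $\bl(x_{[0,r)})\ne\round{r\bff}$, the quantity $|\sigma_{[0,m)}(x_{[0,r)})|$ differs from the common shift at the indices outside $\cJ_r(x)$, so your containment claim for the mismatches would fail.
\item Drop the idea of applying Lemma~\ref{lem:density0} to the shifted sequences with the same $r,s$. It is unjustified, since the frequency vector and the fractals change with the level, and it is unnecessary.
\end{itemize}
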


\begin{proof}\label{proof:Echoing}
Let $\bsigma = (\sigma_{n})_{n\in\NN}$ be as in Lemma~\ref{lem:density0}.
Let $h \ge 1$ be such that $|\sigma_{[0,h)}(a)| \ge s{+}r$ for all $a \in \cA$, and by the recurrence, let $(n_k)_{k\in\NN}$ be an increasing sequence of natural numbers such that $\sigma_{[n_k,n_k+h)} = \sigma_{[0,h)}$ for all $k \in \NN$. 

Let $w \in X_{\bsigma}$.
Then there exist $w_0^{(k)}w_1^{(k)}\cdots \in X_{T^{n_k}\bsigma}$, $0 \le j_k < |\sigma_{[0,n_k)}(w_0^{(k)})|$, such that 
\[
w = T^{j_k} \sigma_{[0,n_k)}(w_0^{(k)}w_1^{(k)}\cdots).
\]
Let 
\begin{equation} \label{e:rk}
r_{k} := \norm{M_{\sigma_{[0,n_k)}} \round{r\bff}}_1
\end{equation}
(where we recall that $\round{r\bff}$ is the nearest integer vector to~$r\bff$), 
\begin{equation} \label{e:sk}
s_k := |\sigma_{[0,n_k)}(w^{(k)}_{[0,s)})| - j_k, 
\end{equation}
and 
\[
\cI_k := \bigcup_{i\in\cJ_{r}(w^{(k)})\cap[0,s)} \big[|\sigma_{[0,n_k)}(w^{(k)}_{[0,i)})|{-}j_k, |\sigma_{[0,n_k)}(w^{(k)}_{[0,i+1)})|{-}j_k\big) \cap \NN,
\]
with $\cJ_r$ defined in \eqref{e:Jr}.
If $i \notin \cJ_{r}(w^{(k)}) \cap [0,s)$, then $w^{(k)}_i = w^{(k)}_{i+r}$ and
\[
r_k = \norm{M_{\sigma_{[0,n_k)}} \bl(w^{(k)}_{[i,i+r)})}_1 = \norm{\bl(\sigma_{[0,n_k)}(w^{(k)}_{[i,i+r)}))}_1 = |\sigma_{[0,n_k)}(w^{(k)}_{[i,i+r)})|.
\]
This implies that
\[ 
    \{0 \le i<s_k : w_i \ne w_{i+r_{k}}\} \subset \cI_k,
\]
and $\cI_k$ is a union of at most $s$ intervals of integers.

Since $|\sigma_{[0,h)}(a)| \ge s{+}r$ for all $a \in \cA$, $w_{[0,s+r)}^{(k)}$ is a factor of $\sigma_{[0,h)}(ab)$ for some $a,b \in \cA$.
Consider first the case that $w_{[0,s+r)}^{(k)}$ is a factor of $\sigma_{[0,h)}(a)$.
Then $w_{[0,s+r)}^{(k)}$ is the prefix of some $\tilde{w} = \tilde{w}_0\tilde{w}_1\cdots \in X_{\bsigma}$.
Then we have 
\[
\begin{gathered}
r_{k} \le r \max_{a\in\cA} |\sigma_{[0,n_k)}(a)| \le \varepsilon\, s \min_{a\in\cA} f_a \max_{a\in\cA} |\sigma_{[0,n_k)}(a)| \le \varepsilon\,s \sum_{a\in\cA} f_a |\sigma_{[0,n_k)}(a)|, \\
|\cI_k| \le \sum_{a \in A} |\{ i \in \cJ_{r}\cap[0,s) : \tilde{w}_i = a \}|\ |\sigma_{[0,n_k)}(a)| \le \varepsilon\,s \sum_{a\in\cA} f_a\, |\sigma_{[0,n_k)}(a)|, \\
s \sum_{a\in\cA} f_a |\sigma_{[0,n_k)}(a)| \le 3 \sum_{a\in\cA} |\tilde{w}_{[1,s/2)}|_a \  |\sigma_{[0,n_k)}(a)| < 3 s_k.
\end{gathered}
\]

Consider now the case that $w_{[0,s')}^{(k)}$ is a suffix of $\sigma_{[0,h)}(a)$ for some $s' < s+r$ and $w_{[s',s+r)}^{(k)}$ is a prefix of $\sigma_{[0,h)}(b)$.
Then $w_{[0,s')}^{(k)}$ is the prefix of some $w' \in X_{\bsigma}$ and $w_{[s',s+r)}^{(k)}$ is the prefix of some $\tilde{w} \in X_{\bsigma}$.
We have
\begin{align*}
    |\cI_k| &\le \sum_{a \in A} \Big( \big| \{ i \in \cJ_{r}\cap[0,s) : \tilde{w}_i = a \} \big|+ r + \big| \{ i \in \cJ_{r}\cap[0,s) : w'_i = a \} \big|\Big) |\sigma_{[0,n_k)}(a)| \\ 
    &\le 3 \varepsilon\,s \sum_{a\in\cA} f_a\, |\sigma_{[0,n_k)}(a)|.
\end{align*} 
If $s/2 \le s'$, then $w'_{[1,s/2)}$ is a factor of $w_{[0,s')}^{(k)}$, therefore we have
\[
    s \sum_{a\in\cA} f_a |\sigma_{[0,n_k)}(a)| \le 3 \sum_{a\in\cA} | w'_{[1,s/2)}|_a \, |\sigma_{[0,n_k)}(a)| < 3 s_k.
\]
If $s/2 > s'$, then $\tilde{w}_{[1,s/2)}$ is a factor of $w_{[s',s+r)}^{(k)}$ and we can apply the above argument to get the same bound.
Therefore, 
\[ 
r_{k} + |\cI_k| < 9 \varepsilon s_k.
\]
This proves that $w$ is echoing.
\end{proof}

\section{Strong echoing of dendric $\cS$-adic words}\label{sec:Strong echoing of dendric S-adic words}
Let $\bsigma$ be a primitive directive sequence of unimodular substitutions over a finite alphabet~$\cA$ and $X_{\bsigma}$ be the $\cS$-adic shift generated by~$\bsigma$.
By \cite{Berthe14substitutivedynamical}, primitivity gives that $(X_{\bsigma},T)$ is a minimal shift (i.e., all its elements are uniformly recurrent).
Recall that, for a factor $u$ of a word in~$X_{\bsigma}$, $R_{X_{\bsigma}}(u)$ is the set of return words, and $X_{\bsigma}$ is dendric if each set $R_{X_{\bsigma}}(u)$ forms a basis of the free group~$F_{\cA}$.

In order to show that the non-vanishing condition (\ref{eq:non vanishing}) of Definition~\ref{def:strongechoing} holds, we define a map that acts on $\cA^*$ by $v_1 \cdots v_n \mapsto \sum_{i=1}^{n} v_i \beta^{n-i}$.
Since we want to use the free group properties of dendric words, we have to extend this map to~$F_{\cA}$, and we define the map as composition of a homomorphism on~$F_{\cA}$ and an evaluation map that produces $\beta$-representations. 

Consider the group $(\ZZ^{\cA}[x,x^{-1}],*)$ of Laurent polynomials with coefficients in~$\ZZ^{\cA}$ with the group operation~$*$ defined by
\[
f(x) * g(x) := f(x)\, x^{\ell(g)} + g(x),\quad \mbox{where}\ \ell(g) := \langle g(1),\bone \rangle;
\]
recall that $\bone = (1)_{a\in\cA}$, thus $\langle g(1),\bone \rangle = \sum_{a\in\cA} z_a$ for $g(1) = (z_a)_{a\in\cA} \in \ZZ^{\cA}$ and note that $\ell(f*g) = \ell(f) + \ell(g)$.
The inverse element of $f(x)$ is $g(x) = - f(x) x^{-\ell(f)}$ and satisfies $\ell(g) = - \ell(f)$.
Define the group homomorphism
\[
P:\ (F_{\cA},\cdot) \to (\ZZ^{\cA}[x,x^{-1}],*) \quad \mbox{by}\ a \mapsto \be_a \ \mbox{for}\ a \in \cA,
\]
where $\cdot$ denotes the free group operation.
Then $P(v_1\cdots v_{n})(x) = \sum_{i=1}^{n} \be_{v_i} x^{n-i}$ for $v_1\cdots v_{n} \in \cA^{n}$ and $P(a^{-1})(x) = -\be_a x^{-1}$ for $a \in \cA$.
In particular, we have $\ell(P(v)) = |v|$ and $\ell(P(v^{-1})) = -|v|$ for $v \in \cA^*$.
For $\cA \subset \CC$ and $\beta \in \CC \setminus \{0\}$, define the $\beta$-evaluation map
\[
\mathrm{ev}_\beta:\ \ZZ^{\cA}[x,x^{-1}] \to \CC, \quad f \mapsto \langle f(\beta), (a)_{a\in\cA}\rangle, 
\]
thus $\mathrm{ev}_\beta(\sum_i \sum_{a\in\cA} z_{i,a} \be_a x^i) = \sum_i \sum_{a\in\cA} z_{i,a} a \beta^i$ and, in particular, 
\[
\mathrm{ev}_\beta(P(v_1\cdots v_{n})) = \sum_{i=1}^{n} v_i \beta^{n-i} \quad \mbox{for}\ v_1\cdots v_{n} \in \cA^{n}.
\]

\begin{lemma} \label{lem:returnwords}
Let $\cA \subset \CC$, $L \subset F_{\cA}$, $\beta \in \CC$ not a root of unity, and  $u \in F_{\cA}$ such that $\ell(P(u)) \neq 0$.
Assume that $\mathrm{ev}_\beta(P(uv)) = \mathrm{ev}_\beta(P(vu))$ for all $v \in L$.
Then $\mathrm{ev}_\beta(P(w)) = \mathrm{ev}_\beta(P(w'))$ for all $w,w' \in \langle uL\rangle$ such that $\ell(P(w)) = \ell(P(w'))$, in particular $|\langle uL\rangle \cap \cA| \le 1$.
\end{lemma}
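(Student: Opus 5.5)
The plan is to turn the hypothesis $\mathrm{ev}_\beta(P(uv)) = \mathrm{ev}_\beta(P(vu))$ into an explicit formula for $\mathrm{ev}_\beta\circ P$ on $L$, and then show that this formula is preserved under the group operations of $F_{\cA}$.

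To shorten notation, write $E(g) := \mathrm{ev}_\beta(P(g))$ and $\ell(g) := \ell(P(g))$ for $g \in F_{\cA}$. From the definition of $*$ and the fact that $P$ is a homomorphism, we have $P(gh) = P(g)\,x^{\ell(h)} + P(h)$. Since $\mathrm{ev}_\beta$ is additive and multiplicative in the $x$-powers, this gives
\[
E(gh) = E(g)\,\beta^{\ell(h)} + E(h), \qquad \ell(gh) = \ell(g) + \ell(h).
\]
Applying this to the hypothesis $E(uv) = E(vu)$ yields $E(u)\beta^{\ell(v)} + E(v) = E(v)\beta^{\ell(u)} + E(u)$. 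Rearranged, this reads
\[
E(v)\,(\beta^{\ell(u)}-1) = E(u)\,(\beta^{\ell(v)}-1) \qquad \text{for all } v \in L.
\]
Because $\beta$ is not a root of unity and $\ell(u) \ne 0$, the factor $\beta^{\ell(u)}-1$ is nonzero. So we can set $c := E(u)/(\beta^{\ell(u)}-1)$, and we obtain $E(v) = c\,(\beta^{\ell(v)}-1)$ for every $v \in L$. The same formula holds trivially for $v = u$.

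The key step is to show that
\[
H := \{g \in F_{\cA} : E(g) = c\,(\beta^{\ell(g)}-1)\}
\]
is a subgroup of $F_{\cA}$. The identity lies in $H$, since $E=0$ and $\ell=0$ there. For closure under products, take $g,h \in H$. The product formula gives
\[
E(gh) = c(\beta^{\ell(g)}-1)\beta^{\ell(h)} + c(\beta^{\ell(h)}-1) = c(\beta^{\ell(gh)}-1).
\]
For inverses, use $P(g^{-1}) = -P(g)\,x^{-\ell(g)}$, which gives
\[
E(g^{-1}) = -c(\beta^{\ell(g)}-1)\beta^{-\ell(g)} = c(\beta^{-\ell(g)}-1) = c(\beta^{\ell(g^{-1})}-1).
\]
Hence $H$ is a subgroup containing $u$ and $L$, and therefore it contains $\langle uL\rangle$. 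The same holds under either reading of $\langle uL\rangle$: the subgroup generated by $\{uv : v \in L\}$, or the one generated by $u$ and $L$.

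Consequently, on $\langle uL\rangle$ the value of $E$ is a function of $\ell$ alone. So $E(w) = E(w')$ whenever $\ell(w) = \ell(w')$, which is the main claim. For the final assertion, suppose $a,b \in \langle uL\rangle \cap \cA$. Both have $\ell = 1$, so $a = E(a) = E(b) = b$ as complex numbers, and since the letters of $\cA \subset \CC$ are distinct, $a = b$. This gives $|\langle uL\rangle \cap \cA| \le 1$.

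I expect no serious obstacle here. The only point needing care is the inverse computation in the twisted group $(\ZZ^{\cA}[x,x^{-1}],*)$, together with the observation that $\beta^{\ell(u)} \ne 1$ is exactly where the assumptions that $\beta$ is not a root of unity and $\ell(P(u)) \neq 0$ are used.
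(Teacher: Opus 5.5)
Your proof is correct. It reaches the same closed formula $\mathrm{ev}_\beta(P(w)) = \frac{\beta^{\ell(P(w))}-1}{\beta^{\ell(P(u))}-1}\,\mathrm{ev}_\beta(P(u))$ as the paper, but performs the two main steps in the opposite order.

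\emph{The paper's order.} It first extends the commutation relation $\mathrm{ev}_\beta(P(uw)) = \mathrm{ev}_\beta(P(wu))$ from the generators $uv$, $v \in L$, to every $w \in \langle uL\rangle$. This uses a rewriting principle: in $f*g*h$ one may replace $g$ by any $g'$ with the same evaluation and the same $\ell$, so $u$ can be moved past each factor $uv$ one at a time. Only afterwards does it solve that relation for $\mathrm{ev}_\beta(P(w))$.

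\emph{Your order.} You solve the relation on the generators first. You then check by direct computation that the set $H$ defined by the closed formula is closed under products and inverses.

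\emph{Why they match.} The relation $E(ug) = E(gu)$ is equivalent to $E(g)(\beta^{\ell(u)}-1) = E(u)(\beta^{\ell(g)}-1)$. So your $H$ is exactly $\{g \in F_{\cA} : E(ug) = E(gu)\}$. Both arguments therefore prove the same intermediate fact, that this set is a subgroup, the paper combinatorially and you algebraically.

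\emph{What your version buys.} It treats inverses explicitly. The paper's sketch tacitly also needs to replace $(uv)^{-1}$ by $(vu)^{-1}$ to handle reduced words containing inverse letters, and does not spell this out. Your argument also gives the conclusion on the possibly larger subgroup generated by $u$ and $L$, so the reading of the notation $\langle uL\rangle$ does not matter.

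\emph{What the paper's version buys.} It stays closer to the combinatorial picture of commuting $u$ through return words, which is how the lemma is later applied.
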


\begin{proof}
First note that $\mathrm{ev}_\beta(f*g*h) = \mathrm{ev}_\beta(f*g'*h)$ if $\mathrm{ev}_\beta(g) = \mathrm{ev}_\beta(g')$ and $\ell(g) = \ell(g')$.
Therefore, we have $\mathrm{ev}_\beta(P(w)) = \mathrm{ev}_\beta(P(w'))$ for all pairs $w,w'$ that are obtained by replacing $uv$ by~$vu$, $v \in L$.
In particular, we have $\mathrm{ev}_\beta(P(uw)) = \mathrm{ev}_\beta(P(wu))$ for all $w \in \langle uL\rangle$, the subgroup of $F_{\cA}$ generated by $\{uv : v \in L\}$, i.e., 
\[
\mathrm{ev}_\beta(P(u))\, \beta^{\ell(P(w))} + \mathrm{ev}_\beta(P(w)) = \mathrm{ev}_\beta(P(w))\,  \beta^{\ell(P(u))} + \mathrm{ev}_\beta(P(u)),
\]
thus
\[
\mathrm{ev}_\beta(P(w)) = \frac{\beta^{\ell(P(w))}-1}{\beta^{\ell(P(u))}-1}\, \mathrm{ev}_\beta(P(u)).
\]
This shows that $\mathrm{ev}_\beta(P(w)) = \mathrm{ev}_\beta(P(w'))$ for all $w, w' \in \langle uL\rangle$ with $\ell(P(w)) = \ell(P(w'))$.
For $a \in \cA$, we have $\mathrm{ev}_\beta(P(a)) = a$ and $\ell(P(a)) = 1$, thus $a, b \in \langle uL\rangle \cap \cA$ implies $a = b$.
\end{proof}

Let $p$ be a natural number.
A finite word $v$ is called \emph{$p$-periodic} if $v_i = v_{i+p}$ for all $0 \le i < |v|{-}p$.
The following lemma shows that dendric $\cS$-adic shifts have periodic finite words.

\begin{lemma}\label{lem:expanding gap on return words}
Let $\bsigma$ be a primitive sequence of unimodular substitutions such that $X_{\bsigma}$ is dendric.
Let $n,r \ge 1$, $v = v_{[0,r)} \in \cA^r$, and $w \in X_{\bsigma}$ be such that $\sigma_{[0,n)}(v\, v_0)$ is a factor of~$w$.
Then there exists a factor $u$ of $\sigma_{[0,n)}(v\,v_0)$ with $|u| = |\sigma_{[0,n)}(v)|$ such that, for every $j \in \NN$, $w_{[j,j+|u|)} = u$ implies 
\[
w_{i} = w_{i+|u|} \quad \text{for all integers } i \in \bigg[j, j+ \bigg\lfloor \frac{|\sigma_{[0,n)}(v_0)|}{|\cA|} \bigg\rfloor \bigg)\cdot
\]
If moreover $\tilde{w} = \tilde{w}_0 \tilde{w}_1 \cdots \in \cA^\NN$ is such that $\sigma_{[0,n)}(\tilde{w}_{[0,s/8)})$ is a factor of~$w$ and $\tilde{w}_{[0,s/8)}$ contains a set of return words of~$v\,v_0$ with abelianizations spanning~$\RR^{\cA}$, then all elements of $R_{X_{\bsigma}}(u)$ are factors of $\sigma_{[0,n)}(\tilde{w}_{[0,s/8)})$.
\end{lemma}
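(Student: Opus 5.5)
The plan is to exploit the fact that $\sigma_{[0,n)}(v\,v_0)$ is automatically periodic, and then use the low complexity of dendric shifts to locate a window $u$ whose occurrences force a long periodic continuation. Write $L := |\sigma_{[0,n)}(v)|$. Since $v_0$ is the first letter of~$v$, the word $\sigma_{[0,n)}(v_0)$ is a prefix of $\sigma_{[0,n)}(v)$. Hence $y := \sigma_{[0,n)}(v\,v_0) = \sigma_{[0,n)}(v)\,\sigma_{[0,n)}(v_0)$ is $L$-periodic. For $0 \le t \le m := |\sigma_{[0,n)}(v_0)|$, let $u_t := y_{[t,t+L)}$. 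Each $u_t$ is a factor of $y$ of length~$L$, and $u_{t+1}$ is obtained from $u_t$ by deleting the first letter and appending $y_{t+L} = y_t$.

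\emph{Key observation.} Suppose $u_t,u_{t+1},\dots,u_{t+h-1}$ are all not right special in~$X_{\bsigma}$. Then every occurrence $w_{[j,j+L)} = u_t$ forces $w_{j+L+i} = y_{t+L+i} = y_{t+i} = w_{j+i}$ for $0 \le i < h$, by induction on~$i$. Indeed, each $u_{t+i}$ has a unique right extension in $X_{\bsigma}$, and that extension is the one visible in~$y$, which is a factor of~$w$. This gives $w_i = w_{i+L}$ on $[j,j+h)$.

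It therefore suffices to bound the number of positions $t \in [0,m)$ for which $u_t$ is right special. A dendric shift has factor complexity $p(k) = (|\cA|-1)k+1$. Hence $\sum_{x} (\#\mathrm{right\ ext}(x)-1) = p(L+1)-p(L) = |\cA|-1$, the sum running over factors $x$ of length~$L$, so there are at most $|\cA|-1$ right special words of length~$L$. If $\sigma_{[0,n)}(v)$ is a primitive word, then the $u_t$ with $0 \le t < m \le L$ are pairwise distinct conjugates. In that case at most $|\cA|-1$ positions are bad, and they cut $[0,m)$ into at most $|\cA|$ runs of good positions, one of which has length at least $\lfloor m/|\cA| \rfloor$. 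Taking $u := u_t$ with $t$ the start of that run proves the first claim.

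If $\sigma_{[0,n)}(v)$ is a proper power $z^e$, I expect this to be the delicate point, since then windows repeat with period $|z|$. I would first try to exclude this case: $z^e$ with $e\ge 2$ and $\sigma_{[0,n)}(v)$ long would contradict the bounded-power property coming from minimality and aperiodicity. Alternatively, I would handle it by working modulo $|z|$, where a good run of length $|z|$ already propagates periodicity indefinitely.

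For the second claim, take return words $x$ of $v\,v_0$ occurring in $\tilde w_{[0,s/8)}$ whose abelianizations span~$\RR^{\cA}$. For each such~$x$, the word $x\,v\,v_0$ is a factor of $\tilde w_{[0,s/8)}$, so $\sigma_{[0,n)}(x\,v\,v_0)$ is a factor of $\sigma_{[0,n)}(\tilde w_{[0,s/8)})$. This factor contains occurrences of~$u$ at offsets $t$ and $t + |\sigma_{[0,n)}(x)|$. The segment between these two occurrences is a conjugate of $\sigma_{[0,n)}(x)$ with the same abelianization. Cutting at all intermediate occurrences of~$u$ writes this segment as a concatenation of return words of~$u$, each a factor of $\sigma_{[0,n)}(\tilde w_{[0,s/8)})$.

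Let $R' \subset R_{X_{\bsigma}}(u)$ be the set of return words appearing in these decompositions. Then the abelianizations $M_{\sigma_{[0,n)}}\bl(x)$ lie in the span of $\{\bl(r) : r \in R'\}$. By unimodularity they span~$\RR^{\cA}$, so $R'$ spans $\RR^{\cA}$. By dendricity, $R_{X_{\bsigma}}(u)$ is a basis of~$F_{\cA}$, so it has exactly $|\cA|$ elements, and their abelianizations form a basis of $\ZZ^{\cA}$. Since $R'$ already spans $\RR^{\cA}$, it must contain all $|\cA|$ return words, i.e., $R' = R_{X_{\bsigma}}(u)$, as required.
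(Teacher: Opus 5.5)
\textbf{There is a genuine gap in the first claim: the proper-power case.} Your treatment of the second claim is correct and is the paper's argument in more detail. Your run-counting in the first claim also works whenever the windows $u_t$, $0\le t<m$, are pairwise distinct. But you leave the case $\sigma_{[0,n)}(v)=z^e$, $e\ge2$, $z$ primitive, open, and neither of your remedies works.

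\emph{Bounded powers.} Minimality only bounds the exponent of each \emph{fixed} word. Dendric shifts (e.g.\ Sturmian ones) contain squares of arbitrarily long words. Nothing in the hypotheses forbids, e.g., $v=v_0v_0$, where $\sigma_{[0,n)}(v)=\sigma_{[0,n)}(v_0)^2$, so the case cannot be excluded outright.

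\emph{Working modulo $|z|$.} Suppose $|z|<m$. A run of $|z|$ good positions would force $w$ to be eventually periodic. Hence some conjugate of $z^e$ is right special, and every good run in $[0,m)$ is shorter than $|z|$. That is useless when $|z|\le\lfloor m/|\cA|\rfloor$.

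\emph{Periodicity plus dendricity cannot suffice.} In a Sturmian shift containing $a^{2m}$ with $m\ge2$, the word $y=a^{2m}$ is an $L$-periodic factor with $L=m$. Its only length-$L$ factor $a^m$ has occurrences at the end of maximal runs of $a$, i.e.\ followed by $b$. So the conclusion fails for every choice of $u$.

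\textbf{The missing ingredient is unimodularity, which your first part never uses.} If $\sigma_{[0,n)}(v)=z^e$ with $z$ primitive, then $\bl(v)=e\,M_{\sigma_{[0,n)}}^{-1}\bl(z)\in e\,\ZZ^{\cA}$. Hence $e$ divides $|v|_{v_0}\ge 1$, and
\[
e\,|z| = |\sigma_{[0,n)}(v)| \ge |v|_{v_0}\,|\sigma_{[0,n)}(v_0)| \ge e\,m ,
\]
i.e.\ $|z|\ge m$. Therefore the windows $u_t$, $0\le t<m$, are pairwise distinct in all cases. Your count of at most $|\cA|-1$ right special words of length $L$ then yields a good run of length $\lfloor m/|\cA|\rfloor$, exactly as you wrote.

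With this step inserted, your first part is a legitimate variant of the paper's proof. The paper counts factors with more than one right extension of length $t=\lfloor m/|\cA|\rfloor$ (at most $(|\cA|-1)t$ of them). It applies pigeonhole to the first $(|\cA|-1)t+1$ windows and turns a repeated window into a period $p<m$, which by Fine--Wilf may be taken to divide $L$. It refutes this with the same divisibility argument. Your version replaces pigeonhole and Fine--Wilf by the distinctness of conjugates of a word whose primitive root has length at least $m$.
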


\begin{proof}
Since $X_{\bsigma}$ is dendric, the number of factors of length~$k$ is $(|\cA|{-}1)k{+}1$ for all $k\in\NN$.
Therefore, for all $t \in \NN$, at most $(|\cA|{-}1)t$ factors of length $k$ have more than one right extension of length~$t$.
Suppose that all factors of $\sigma_{[0,n)}(v\,v_0)$ of length $\tilde{r} = |\sigma_{[0,n)}(v)|$ have more than one right extension of length $t = \lfloor|\sigma_{[0,n)}(v_0)|/|\cA|\rfloor$.
Then one of these factors appears twice in $\sigma_{[0,n)}(v\,v_0)$, at positions bounded by $(|\cA|{-}1)t$.
Since $|\cA|t \le |\sigma_{[0,n)}(v_0)|$, there exists a length $\tilde{r}$ factor $u$ of $\sigma_{[0,n)}(v)$ which is $p$-periodic word for some $1 \le p \le (|\cA|{-}1)t$.
Since $\sigma_{[0,n)}(v\,v_0)$ is $\tilde{r}$-periodic, we obtain that the length~$p$ right extension~$u'$ of~$u$ is also $p$-periodic and $\tilde{r}$-periodic.
By Fine and Wilf's theorem on $u'$, w.l.o.g., $p$ divides~$\tilde{r}$.
In particular, we have $\bl(\sigma_{[0,n)}(v)) \in (\tilde{r}/p)\, \ZZ^{\cA}$.
By the unimodularity of $\sigma_{[0,n)}$, we obtain that $\bl(v) \in (\tilde{r}/p)\, \ZZ^{\cA}$, thus $p\, |\{0 \le i < r : v_i = v_0\}| \in \tilde{r}\, \ZZ$, giving the contradiction 
\[
\tilde{r} \le p\,|\{0 \le i < r : v_i = v_0\}| < |\sigma_{[0,n)}(v_0)|\, |\{0 \le i < r : v_i = v_0\}| \le |\sigma_{[0,n)}(v)| = \tilde{r}.
\]
This implies that $\sigma_{[0,n)}(v\,v_0)$ has a factor $u \in \cA^{\tilde{r}}$ occurring before position $(|\cA|{-}1)t$ with a unique extension of length~$t$.
Since $|\cA|t \le |\sigma_{[0,n)}(v_0)|$, this extension is a prefix of~$u$.
Hence, $w_{[i,i+\tilde{r})} = u$ implies $w_{[i,i+t)} = w_{[i+\tilde{r},i+\tilde{r}+t)}$ for all $w \in X_{\bsigma}$, $i \in \NN$.

If $\tilde{w}_{[0,s/8)}$ contains return words of $v$ with abelianizations spanning~$\RR^{\cA}$, then the $\sigma_{[0,n)}$-images of these return words also span~$\RR^{\cA}$.
Therefore, the abelianizations of  return words of~$u$ that are factors of $\sigma_{[0,n)}(\tilde{w}_{[0,s/8)})$ also span~$\RR^{\cA}$.
By dendricity, this implies that each element of $R_{X_{\bsigma}}(u)$ is a factor of $\sigma_{[0,n)}(\tilde{w}_{[0,s/8)})$.
\end{proof}

\begin{lemma}\label{lem:strong echoing and return word}
Let $w = w_0 w_1 \cdots \in \cA^{\mathbb{N}}$ be an echoing word with sequences $(r_{k})_{k\in\NN}$, $(s_k)_{k\in\NN}$.
Assume that, for every $k \in \NN$, $w$~has a factor $u = u_{[0,r_k)} \in \cA^{r_{k}}$ such that
\begin{enumerate}[\upshape (i)]
\item \label{i:531}
every occurrence of~$u$ in~$w_{[0,s_{k}/2+r_{k})}$ is followed by a prefix~$u_{[0,l_k)}$ of~$u$ with $\lim_{k \to \infty} l_{k}/\log(s_{k})=\infty$, 
\item \label{i:532}
and $w_{[0,s_{k}/2)}$ contains a set of return words of~$u$ such that the generated subgroup of~$F_\cA$ contains at least two different elements of~$\cA$.
\end{enumerate}
Then $w$ is strong echoing.
\end{lemma}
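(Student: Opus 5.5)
We have to show that for every $\beta\in\overline{\QQ}$ with $|\beta|>1$ and every $\varepsilon>0$, the already given echoing data $(r_k),(s_k),(\cI_k)$ admit $l_k,p_k,q_k$ satisfying (\ref{eq:upper bound of p,q})--(\ref{eq:expanding gap}) of Definition~\ref{def:strongechoing}. If $\beta$ is a root of unity then $|\beta|=1$, so we may assume $\beta$ is not a root of unity, which is what Lemma~\ref{lem:returnwords} needs. The plan is to take $[p_k,q_k]$ to be a window of length $r_k$ starting at an occurrence of the factor $u$ of hypothesis~(\ref{i:531}). The free-group argument of Lemma~\ref{lem:returnwords} then shows that the non-vanishing condition~(\ref{eq:non vanishing}) cannot fail at every occurrence of $u$ in $w_{[0,s_k/2)}$.

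First fix $k$ and let $j_1<j_2<\cdots$ be the occurrences of $u$ in $w_{[0,s_k/2+r_k)}$. For an occurrence $j$ with $j+r_k\le s_k/2$, hypothesis~(\ref{i:531}) gives $w_{[j,j+r_k+l_k)}=u\,u_{[0,l_k)}$. Hence $w_i=w_{i+r_k}$ for $i\in[j,j+l_k)$. Set $p_k:=j+l_k$ and $q_k:=p_k+r_k-1$. The sum in (\ref{eq:non vanishing}) compares $w_{[p_k,q_k]}$ with its shift by $r_k$. Multiplying by $\beta^{q_k}$, the difference becomes $\mathrm{ev}_\beta(P(xy))-\mathrm{ev}_\beta(P(yx))$, where $x$ is the length-$r_k$ block starting at $p_k$ and $y$ is the next block.

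It is cleaner to reformulate this in terms of return words. Let $v$ be the return word of $u$ starting at $j$, so $vu$ is a factor. Since the shift by $|v|$ is controlled by periodicity, I would instead compare with conjugates. Because $u$ is followed by $u_{[0,l_k)}$, the word $u$ is $r_k$-periodic on an extended window. Vanishing of the relevant sum at every occurrence would then amount to $\mathrm{ev}_\beta(P(uv))=\mathrm{ev}_\beta(P(vu))$ for each return word $v$ occurring in $w_{[0,s_k/2)}$. Here one uses the identity $w_{[j,j+|v|+r_k)}=vu=u\,v'$ and telescopes. Concretely, I would choose $p_k$ as the start of the occurrence and $q_k$ as the start of the next occurrence of $u$ minus one. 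Then
\[
\sum_{i=p_k}^{q_k}(w_i-w_{i+r_k})\beta^{-i}=\beta^{-q_k}\bigl(\mathrm{ev}_\beta(P(v))\beta^{r_k}+\mathrm{ev}_\beta(P(u))-\mathrm{ev}_\beta(P(u))\beta^{|v|}-\mathrm{ev}_\beta(P(v))\bigr),
\]
up to the correct bookkeeping. Vanishing of this expression is exactly $\mathrm{ev}_\beta(P(vu))=\mathrm{ev}_\beta(P(uv))$. Condition~(\ref{eq:expanding gap}) holds on the left of $p_k$ only if the preceding $l_k$ letters agree with their shift. To secure this, I would shift the window inside the periodic stretch: taking $p_k$ to be $l_k$ after the occurrence and $q_k$ to be $l_k$ after the next occurrence should make both gap conditions follow from~(\ref{i:531}). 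The same telescoping, conjugated by $u_{[0,l_k)}$, still expresses vanishing as $\mathrm{ev}_\beta(P(uv))=\mathrm{ev}_\beta(P(vu))$. The bounds $l_k\le p_k<q_k<s_k/2$ follow after possibly halving the windows, and $l_k/\log s_k\to\infty$ is given by~(\ref{i:531}).

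Now suppose that (\ref{eq:non vanishing}) failed for every such window. Then $\mathrm{ev}_\beta(P(uv))=\mathrm{ev}_\beta(P(vu))$ for all $v$ in the set $L$ of return words from hypothesis~(\ref{i:532}), and $\ell(P(u))=r_k\ne0$. By Lemma~\ref{lem:returnwords}, $|\langle uL\rangle\cap\cA|\le1$. I expect the main obstacle to be relating $\langle uL\rangle$ to $\langle L\rangle$. Since $u$ is not in $\langle L\rangle$ in general, I would replace $L$ by $L\cup L^{-1}v_0$ for a fixed $v_0\in L$, using $u v u v_0^{-1}\ldots$; more simply, run Lemma~\ref{lem:returnwords} with $u$ replaced by $uv_0$ where $v_0\in L$. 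Then $uv_0\cdot(v_0^{-1}v)=uv$, so $\langle uv_0\cdot v_0^{-1}L\rangle\supseteq\langle L\rangle$-type elements, and the commutation hypothesis still holds since $uv_0$ and $v$ commute under $\mathrm{ev}_\beta$. This gives a contradiction with the two distinct letters in $\langle L\rangle$ from~(\ref{i:532}). Hence some window satisfies~(\ref{eq:non vanishing}) for each $k$, and $w$ is $\beta$-echoing for every admissible $\beta$, that is, strong echoing.
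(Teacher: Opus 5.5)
Your reduction of (\ref{eq:non vanishing}) to a commutation relation is sound. For an occurrence $j$ of $u$ followed by the return word $R$ (your $v$), the sum over $[j,j+|R|)$ equals $\pm\beta^{-q_k}\bigl(\mathrm{ev}_\beta(P(uR))-\mathrm{ev}_\beta(P(Ru))\bigr)$.

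The gap is in the last step. Applying Lemma~\ref{lem:returnwords} with $L$ the set of return words themselves only yields $|\langle uL\rangle\cap\cA|\le 1$. This does not contradict (\ref{i:532}), which concerns $\langle L\rangle$, not $\langle uL\rangle$. These groups can differ essentially. For $u=a$ in the Fibonacci word, $L=\{a,ab\}$ generates $F_\cA$, while $\langle aa,aab\rangle=\langle a^2,b\rangle$ contains only the letter $b$ (all its elements have even exponent sum in $a$). Your proposed repair changes nothing: with $u'=uv_0$ and $L''=v_0^{-1}L$ one has $u'L''=uL$ exactly, as your own identity $uv_0\cdot(v_0^{-1}v)=uv$ shows.

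The missing idea, which is what the paper does, is to apply the lemma to $L'=u^{-1}L$. That is, write each return word as $R=uv$ with $v=u^{-1}R\in F_\cA$; this $v$ need not be a positive word, since $R$ may be shorter than $u$. Then $uL'=L$, so the lemma's conclusion contradicts (\ref{i:532}) directly. Its hypothesis $\mathrm{ev}_\beta(P(uv))=\mathrm{ev}_\beta(P(vu))$ is literally the vanishing of the window sum. Indeed, $w_{[j,j+|R|)}=uv$, and $w_{[j+r_k,j+r_k+|R|)}=vu$ is the length-$|R|$ suffix of $Ru$. A short computation shows this is equivalent to your condition $\mathrm{ev}_\beta(P(uR))=\mathrm{ev}_\beta(P(Ru))$, so your telescoping was fine; only the set fed to the lemma is wrong.

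Your final window choice also fails. You place $q_k$ at $l_k$ after the next occurrence $j+|R|$. Hypothesis (\ref{i:531}) at that occurrence gives $w_i=w_{i+r_k}$ only for $i\in[j+|R|,j+|R|+l_k)$. But (\ref{eq:expanding gap}) would then need it on $[j+|R|+l_k,j+|R|+2l_k)$, which nothing controls.

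Take $p_k=j+l_k$ and $q_k=j+|R|-1$ instead. Then:
the left gap comes from (\ref{i:531}) at $j$;
the right gap comes from (\ref{i:531}) at $j+|R|\le s_k/2$, an occurrence inside $w_{[0,s_k/2+r_k)}$, which is why the hypothesis is phrased that way;
the discarded indices $[j,j+l_k)$ contribute nothing, so non-vanishing is preserved;
and $q_k<s_k/2$ holds with no halving.
Non-vanishing also forces $|R|>l_k$, so $[p_k,q_k]$ is non-empty.
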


\begin{proof}
Let $\beta \in \overline{\QQ}$ that is not a root of unity.
By Lemma~\ref{lem:returnwords} and Assumption~(\ref{i:532}), there exists a return word~$uv$ of~$u$ in $w_{[0,s_{k}/2)}$ such that $\mathrm{ev}_\beta(P(uv)) \neq \mathrm{ev}_\beta(P(vu))$.
Here, we have $v \in F_{\cA}$, i.e., $uv$ can be shorter than~$u$. 
However, we have $|uv| > l_k$, because $|uv| \le l_k$, the return word property of~$uv$, and Assumption~(\ref{i:531}) would imply that the suffix $uvu$ of length $|uv|$ was a prefix of~$u$, thus $vu = uv$, contradicting that $\mathrm{ev}_\beta(P(uv)) \neq \mathrm{ev}_\beta(P(vu))$.

Since $uv$ occurs as a return word of~$u$ in $w_{[0,s_{k}/2)}$, we can choose $p_k, q_k$ with $l_k \le p_k < q_k < s_k/2$ such that
\[ 
w_{[p_{k}-l_{k},q_{k}+r_k]} = u v u.
\]
Then we have
\[
  \mathrm{ev}_\beta(P(w_{[p_{k}-l_{k},q_{k}]}))=
  \mathrm{ev}_\beta(P(uv)) \neq
  \mathrm{ev}_\beta(P(vu)) =
  \mathrm{ev}_\beta(P(w_{[p_{k}+r_{k}-l_{k},q_{k}+r_{k}]})).
\]
Since they are both prefixes of~$u$, we have $w_{[p_{k}-l_{k},p_{k})} = w_{[p_{k}+r_{k}-l_{k},p_{k}+r_{k})}$ by Assumption~\eqref{i:531}. 
This implies that
\[
\mathrm{ev}_\beta(P(w_{[p_{k},q_{k}]})) \neq \mathrm{ev}_\beta(P(w_{[p_{k}+r_{k},q_{k}+r_{k}]}))
\]
Multiplying by $\beta^{-q_{k}}$ gives the condition~\eqref{eq:non vanishing} of strong echoing.
We also have $w_{[q_{k}+1,q_{k}+l_k]} = w_{[q_{k}+r_{k}+1,q_{k}+r_{k}+l_k]}$ (by Assumption~(\ref{i:531})), which gives the condition~\eqref{eq:expanding gap} of strong echoing.
\end{proof}

We can now prove the second statement of Theorem~\ref{thm:S-adic echoing}.
\begin{proposition}\label{prop:strong echoing of dendric S-adic words}
    Let $\bsigma$ be a primitive and recurrent sequence of unimodular substitutions over~$\cA$, and assume that $\cC_{\bsigma}$ forms a tiling of $\bone^\perp$ with compact sets.
    If $X_{\bsigma}$ is dendric, then each $w \in X_{\bsigma}$ is strong echoing.
\end{proposition}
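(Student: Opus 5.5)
The plan is to combine the construction in the proof of Lemma~\ref{lem:Echoing} with Lemmas~\ref{lem:expanding gap on return words} and~\ref{lem:strong echoing and return word}. Fix $\varepsilon>0$ and take $r,s$ from Lemma~\ref{lem:density0}. Let $h$, $(n_k)$, $w^{(k)}$, $j_k$, $r_k$, $s_k$, $\cI_k$ be as in the proof of Lemma~\ref{lem:Echoing}. That proof already gives conditions (\ref{i:241})--(\ref{i:243}) of Definition~\ref{def:echoing} with $9\varepsilon$ in place of $\varepsilon$. It remains to produce, for each $k$, a factor $u$ of length $r_k$ satisfying (\ref{i:531}) and (\ref{i:532}) of Lemma~\ref{lem:strong echoing and return word}. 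If $\beta$ is a root of unity, then $\beta$ has height $1$; since we only need $|\beta|>1$, this case does not arise.

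\emph{Choice of $u$.} By Lemma~\ref{lem:density0}(\ref{i:d04}) and~(\ref{i:d03}), the prefix $w^{(k)}_{[1,s/8)}$ contains a factor $v\,v_0$ with $v=w^{(k)}_{[i,i+r)}$, $i\notin\cJ_r(w^{(k)})$ and $\bl(v)=\round{r\bff}$. By the definition of $\cJ_r$ and the tiling argument, most positions of $w^{(k)}_{[0,s)}$ are outside $\cJ_r$, so such an $i$ exists. Then $|\sigma_{[0,n_k)}(v)|=r_k$ by \eqref{e:rk}. Apply Lemma~\ref{lem:expanding gap on return words} with $n=n_k$ and this $v$. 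It yields a factor $u$ of $\sigma_{[0,n_k)}(v\,v_0)$ with $|u|=r_k$ such that every occurrence of $u$ in $w$ is followed by the $r_k$-shift-agreement over $l_k:=\lfloor \min_a|\sigma_{[0,n_k)}(a)|/|\cA|\rfloor$ symbols. Since $w$ agrees with its $r_k$-shift there, these symbols form the prefix $u_{[0,l_k)}$, which gives (\ref{i:531}).

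\emph{Growth of $l_k$.} We need $l_k/\log s_k\to\infty$. Here $s_k\le (s+r)\max_a|\sigma_{[0,n_k)}(a)|$. By primitivity and recurrence, the ratio $\max_a|\sigma_{[0,n_k)}(a)|/\min_a|\sigma_{[0,n_k)}(a)|$ should be bounded along the subsequence, because the fixed positive block $\sigma_{[n_k,n_k+h)}=\sigma_{[0,h)}$ together with compactness of the Rauzy fractal (balancedness) controls the column ratios. I expect this step to be the main obstacle. If bounded ratios fail, I would instead pass to a subsequence and use balancedness: compactness gives $|\sigma_{[0,n)}(a)|$ comparable to $\|M_{\sigma_{[0,n)}}\bff\|$-type quantities up to additive constants. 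In either case, $l_k$ grows linearly in $s_k$, which is more than enough.

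\emph{Return words.} For (\ref{i:532}), apply the second part of Lemma~\ref{lem:expanding gap on return words} with $\tilde w=w^{(k)}$. By Lemma~\ref{lem:density0}(\ref{i:d04}), $w^{(k)}_{[1,s/8)}$ contains all return words of $v\,v_0$ (a word of length $r+1$). By dendricity, these form a basis of $F_\cA$, so their abelianizations span $\RR^\cA$. Hence every element of $R_{X_{\bsigma}}(u)$ occurs in $\sigma_{[0,n_k)}(w^{(k)}_{[0,s/8)})$. One checks that this image, shifted by $j_k$, lies within $w_{[0,s_k/2)}$; this uses $s_k\ge$ the image of $w^{(k)}_{[1,s/2)}$ and the bounds $f_a s/7$ versus $f_a s/3$ from Lemma~\ref{lem:density0}(\ref{i:d03}). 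By dendricity again, $R_{X_{\bsigma}}(u)$ generates $F_\cA$. Writing return words as $uv'$ in $F_\cA$, the subgroup $\langle u\{v'\}\rangle$ is the whole group generated by the return words, which contains every letter, and $|\cA|\ge2$. Lemma~\ref{lem:strong echoing and return word} then shows that $w$ is strong echoing.
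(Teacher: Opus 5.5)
Your overall architecture is the paper's: the construction of Lemma~\ref{lem:Echoing}, then Lemma~\ref{lem:expanding gap on return words}, dendricity, and Lemma~\ref{lem:strong echoing and return word}. However, the step you flag as the main obstacle is a genuine gap, and neither justification you offer works. Because you do not control the letter $v_0$, you weaken the agreement length $\lfloor|\sigma_{[0,n_k)}(v_0)|/|\cA|\rfloor$ from Lemma~\ref{lem:expanding gap on return words} to $\lfloor\min_a|\sigma_{[0,n_k)}(a)|/|\cA|\rfloor$. Meanwhile $s_k$ is of order $\max_a|\sigma_{[0,n_k)}(a)|$.

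The block $\sigma_{[n_k,n_k+h)}=\sigma_{[0,h)}$ is composed on the \emph{right} of $\sigma_{[0,n_k)}$. Its positivity therefore constrains the columns of $M_{\sigma_{[0,n_k+h)}}$, not the ratio of the column sums of $M_{\sigma_{[0,n_k)}}$. Balancedness only says that each column $M_{\sigma_{[0,n_k)}}\be_a$ lies within bounded distance of $|\sigma_{[0,n_k)}(a)|\,\bff$. It controls directions, not the relative lengths of different columns, and your comparison with $\norm{M_{\sigma_{[0,n)}}\bff}$ ``up to additive constants'' does not follow from it. Nothing in your argument excludes, say in the Arnoux--Rauzy case, that $\sigma_{[0,n_k)}$ ends with a long run $\alpha_1^{m}$. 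That forces $|\sigma_{[0,n_k)}(b)|\ge m\,|\sigma_{[0,n_k)}(1)|$ for $b\ne 1$. So for an arbitrary sequence $(n_k)$ of recurrence times, your $l_k$ need not satisfy $l_k/\log s_k\to\infty$.

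The missing idea is to choose $v_0$ as a letter $a$ with $|\sigma_{[0,n_k)}(a)|=\max_b|\sigma_{[0,n_k)}(b)|$. Lemma~\ref{lem:density0} makes this possible. For $\varepsilon<1/9$, parts (\ref{i:d01}) and (\ref{i:d03}) give $|\{i\in\cJ_r(\tilde w)\cap[0,s/8):\tilde w_i=a\}|\le\varepsilon f_a s<f_a s/9\le|\tilde w_{[1,s/8)}|_a$. Hence some $i\in[1,s/8)$ with $\tilde w_i=a$ lies outside $\cJ_r(\tilde w)$, and $v=\tilde w_{[i,i+r)}$ has first letter $v_0=a=\tilde w_{i+r}$ and $\bl(v)=\round{r\bff}$. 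Then $s_k\le s\,|\sigma_{[0,n_k)}(a)|$, so $l_k=\lfloor|\sigma_{[0,n_k)}(a)|/|\cA|\rfloor\ge s_k/(s|\cA|)-1$ grows linearly in $s_k$. No comparability of column lengths is needed.

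Alternatively, your bounded-ratio claim can be rescued, but only by re-choosing $(n_k)$. Let $p\ge 1$ be an occurrence of $\sigma_{[0,h)}$ with $M_{\sigma_{[0,p)}}$ positive. Recurrence of the prefix of length $p+h$ then gives infinitely many occurrences $n_k$ of $\sigma_{[0,h)}$ immediately preceded by the fixed positive block $\sigma_{[0,p)}$. For these, $\max_a|\sigma_{[0,n_k)}(a)|/\min_a|\sigma_{[0,n_k)}(a)|$ is bounded by the largest entry of $M_{\sigma_{[0,p)}}$. This argument is not in your proposal.

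A smaller point: Lemma~\ref{lem:density0} applies to elements of $X_{\bsigma}$. The density statements must therefore be made for $\tilde w$ (or $w'$), as in the proof of Lemma~\ref{lem:Echoing}, not for $w^{(k)}\in X_{T^{n_k}\bsigma}$.
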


\begin{proof}\label{proof:strong echoing of dendric S-adic words}
    Let $\varepsilon \leq 1/9$.
    Let $r$, $s$ be as in Lemma~\ref{lem:density0}, and let $n_k$, $w^{(k)}, w',\tilde{w}$, $s'$ be as in the proof of Lemma~\ref{lem:Echoing}.
    Let $a \in \cA$ be a letter such that $|\sigma_{[0,n_k)}(a)| \ge |\sigma_{[0,n_k)}(b)|$ for all $b \in \cA$.
    By Lemma~\ref{lem:density0}~(\ref{i:d01})--(\ref{i:d02}), we have 
    \[
        |\{i  \in \cJ_{r}(\tilde{w}) \cap [0, s/8) : a = w_i\}|
        \leq \varepsilon f_{a} \, s \leq f_{a} s/9 < |\tilde{w}_{[1,s/8)}|_a.
    \]
    Thus there exists a factor $v v_{0} \in \cA^{r+1}$ of $\tilde{w}_{[1, s/8+r)}$ such that $v_0 = a$.
    By Lemma~\ref{lem:density0} (\ref{i:d04}), we can assume that $v v_{0} v'$~is a factor of $\tilde{w}_{[1, s/8)}$ for all $v' \in R_{X_{\bsigma}}(v v_{0})$.
    By Lemma~\ref{lem:expanding gap on return words}, there exists a factor $u$ of $\sigma_{[0,n_k)}(v v_{0})$ with $|u| = |\sigma_{[0,n_k)}(v)|(=r_k)$ such that, for all index $j$ of an occurrence of $u$ in $w$,
    \[
    w_{i} = w_{i+|u|} \quad \text{for all integers } i \in \bigg[j, j+ \bigg\lfloor \frac{|\sigma_{[0,n_k)}(v_0)|}{|\cA|} \bigg\rfloor \bigg),
    \]
    and all the return words of $u$ are factors of $\sigma_{[0,n_k)}(\tilde{w}_{[1,s/8)})$.
    When $k\to\infty$, we have $|\sigma_{[0,n_k)}(v_0)| / (|\cA| \log s_k) \to \infty$  since $s_k \le |\sigma_{[0,n_k)}(v_0)|\, s$ and $\bsigma$ is primitive.
    We have $s_{k}/2 \geq (1/2) \, \sum_{a \in \cA} |\sigma_{[0,n_k)}(a)| \times |\{ 1 \leq i <  s : w^{(k)}_{i} = a \}|$ and
    \[
        |w^{(k)}_{[1,s)}|_a \geq |\tilde{w}_{[1,s/2)}|_a \geq \frac{f_{a}\, s}{3}
        \geq \frac{2f_{a}\, s}{7} \geq 2\, |\tilde{w}_{[1,s/8)}|_a.
    \]
    Thus, we have $s_{k}/2 > |\sigma_{[0,n_k)}(\tilde{w}_{[1,s/8)})|$.
    Therefore, all the return words of $u$ are factors of $w_{[0,s_k/2)}$.
    Since $X_{\bsigma}$ is dendric, they generate the free group~$F_{\cA}$.
    Then, by Lemma~\ref{lem:strong echoing and return word}, $w$ is strong echoing.
\end{proof}

\bibliographystyle{amsalpha} 
\bibliography{echoing}

\end{document}